\begin{document}

\begin{abstract} 
We prove an algebraicity result for the central critical value of certain Rankin-Selberg $L$-functions
for ${\rm GL}_n \times {\rm GL}_{n-1}$. This is a generalization and refinement of the results of 
Harder \cite{harder}, Kazhdan, Mazur and Schmidt \cite{kazhdan-mazur-schmidt}, Mahnkopf \cite{mahnkopf-jussieu}, and 
Kasten and Schmidt \cite{kasten-schmidt}.  
As an application of this result, we prove algebraicity results for certain critical values of the fifth and the seventh symmetric power $L$-functions attached to a holomorphic
cusp form. Assuming Langlands' functoriality one can prove similar algebraicity results for the special values
of any odd symmetric power $L$-function. We also prove a conjecture of Blasius and Panchishkin on twisted 
$L$-values in some cases. 
We comment on the compatibility of our results with Deligne's conjecture on the critical values of 
motivic $L$-functions. These results, as in the above mentioned works, are, in general, based on a nonvanishing hypothesis on certain archimedean integrals. 
 \end{abstract}

\title[]{On the special values of certain Rankin--Selberg $L$-functions and  
applications to odd symmetric power $L$-functions of modular forms}

\author{\bf A. Raghuram}
\date{\today}
\subjclass[2000]{11F67 (11F70, 11F75, 22E55)}

\maketitle

\tableofcontents

\numberwithin{equation}{section}
\newtheorem{thm}[equation]{Theorem}
\newtheorem{cor}[equation]{Corollary}
\newtheorem{lemma}[equation]{Lemma}
\newtheorem{prop}[equation]{Proposition}
\newtheorem{con}[equation]{Conjecture}
\newtheorem{ass}[equation]{Assumption}
\newtheorem{defn}[equation]{Definition}
\newtheorem{rem}[equation]{Remark}
\newtheorem{exer}[equation]{Exercise}
\newtheorem{exam}[equation]{Example}
\newtheorem{hypo}[equation]{Hypothesis}

\section{Introduction and statements of theorems}
\label{sec:intro}

Let $\Pi$ (respectively, $\Sigma$) be a regular algebraic cuspidal automorphic representation of 
${\rm GL}_n({\mathbb A})$ (respectively, ${\rm GL}_{n-1}({\mathbb A})$); here 
${\mathbb A}$ is the ad\`ele ring of ${\mathbb Q}$. We assume the representations are such that 
$s = 1/2$ is critical for the Rankin--Selberg $L$-function attached to $\Pi \times \Sigma$. We prove 
an algebraicity result for $L(1/2, \Pi \times \Sigma)$. See Theorem~\ref{thm:rankin-selberg}.
This is a generalization and refinement of the results of 
Harder \cite{harder}, Kazhdan, Mazur and Schmidt \cite{kazhdan-mazur-schmidt}, Mahnkopf \cite{mahnkopf-jussieu}, and 
Kasten and Schmidt \cite{kasten-schmidt}. Our result, as in the above mentioned works, is, in general, based on a nonvanishing hypothesis on certain archimedean integrals. We also prove a conjecture of Blasius and Panchishkin on twisted $L$-values in some cases using the period relations proved in our paper with Shahidi \cite{raghuram-shahidi-imrn}; see Theorem~\ref{thm:twisted}.

Let $\varphi$ be a holomorphic cusp form of weight $k$. We consider twisted odd symmetric power $L$-functions
$L(s, {\rm Sym}^{2n-1}\varphi, \xi)$, where $\xi$ is any Dirichlet character. Using the above result on Rankin--Selberg $L$-functions, we prove algebraicity results for certain critical values of $L(s, {\rm Sym}^{2n-1}\varphi, \xi)$ when $n \leq 4$. See Theorem~\ref{thm:sym-357}.
For $n=1$ this is a classical theorem due to Shimura \cite{shimura2}; indeed, in this case, our theorem boils
down to Harder's proof \cite{harder} of Shimura's theorem. For $n=2$, our proof may be regarded as a new proof of 
the result of Garrett and Harris \cite{garrett-harris} on symmetric cube $L$-functions. Our theorem is new for 
the fifth and seventh symmetric power $L$-functions. 
Assuming Langlands' functoriality one can prove similar algebraicity results for any odd symmetric power $L$-function.

We now describe the theorems proved in this paper in greater detail, toward which we need some notation. 
Given a regular algebraic cuspidal automorphic representation $\Pi$ of 
${\rm GL}_n({\mathbb A})$ one knows (from Clozel \cite{clozel}) that there is a pure dominant integral weight 
$\mu$ such that $\Pi$ has a nontrivial contribution to the cohomology of some locally symmetric space of ${\rm GL}_n$  
with coefficients coming from the dual of the finite dimensional representation with highest weight $\mu$. 
 We denote this 
as $\Pi \in {\rm Coh}(G_n, \mu^{\vee})$, for $\mu \in X_0^+(T_n)$, where $T_n$ is the diagonal torus of 
$G_n = {\rm GL}_n$.
Under this assumption on $\Pi$, one knows that its rationality field
${\mathbb Q}(\Pi)$ is a number field, and that $\Pi$ is defined over this number field. It is further known that 
the Whittaker model of $\Pi$ carries a ${\mathbb Q}(\Pi)$-structure, and likewise, a suitable cohomology space 
also carries a rational structure. One defines a period $p^{\epsilon}(\Pi)$ by comparing these rational structures; 
here $\epsilon$ is a sign which can be arbitrary if $n$ is even, and is uniquely determined by $\Pi$ if $n$ is odd.  
We briefly review the definition of these periods in \ref{subsec:periods}, and refer the reader to 
\cite{raghuram-shahidi-imrn} for more details. We now state one of the main theorems of this paper:

\begin{thm}
\label{thm:rankin-selberg}
Let $\Pi$ (resp., $\Sigma$) be a regular algebraic cuspidal automorphic representation of ${\rm GL}_n({\mathbb A})$ (resp., ${\rm GL}_{n-1}({\mathbb A})$).  
Let $\mu \in X^+_0(T_n)$ be such that $\Pi \in {\rm Coh}(G_n,\mu^{\vee})$, and let $\lambda \in X^+_0(T_{n-1})$ be such that $\Sigma \in {\rm Coh}(G_{n-1},\lambda^{\vee})$. Assume that
$\mu^{\vee} \succ \lambda$ (see \S\ref{sec:prelims} for the definition and a consequence of this condition). 
Assume also that $s=1/2$ is critical for 
$L_f(s, \Pi \times \Sigma)$ which is the finite part of the Rankin--Selberg $L$-function attached to the 
pair $(\Pi,\Sigma)$. There exists canonical signs
$\epsilon, \eta \in \{\pm\}$ attached to the pair $(\Pi,\Sigma)$; there exists 
nonzero complex numbers $p^{\epsilon}(\Pi)$, $p^{\eta}(\Sigma)$, and
assuming the validity of Hypothesis~\ref{hypo:nonvanishing} there exists a nonzero complex number  
$p_{\infty}(\mu,\lambda)$, such that for any $\sigma \in {\rm Aut}({\mathbb C})$ we have 
$$
\sigma\left(\frac{L_f(1/2,\Pi \times \Sigma)}
{p^{\epsilon}\, (\Pi)p^{\eta}\, (\Sigma)\mathcal{G}(\omega_{\Sigma_f})\, p_{\infty}(\mu,\lambda)}\right)
\  = \  
\frac{L_f(1/2, \Pi^{\sigma} \times \Sigma^{\sigma})}
{p^{\epsilon}(\Pi^{\sigma})\, p^{\eta}(\Sigma^{\sigma})\, \mathcal{G}(\omega_{\Sigma_f^{\sigma}})\,
p_{\infty}(\mu,\lambda)},
$$
where $\mathcal{G}(\omega_{\Sigma_f})$ is the Gauss sum attached to the central character of $\Sigma$. 
In particular,  
$$
L_f(1/2,\Pi \times \Sigma)\  
\sim_{{\mathbb Q}(\Pi,\Sigma)} \ 
p^{\epsilon}(\Pi)\, p^{\eta}(\Sigma)\, \mathcal{G}(\omega_{\Sigma_f})\, p_{\infty}(\mu,\lambda),
$$
where, by $\sim_{{\mathbb Q}(\Pi,\Sigma)}$, we mean up to an element of the number field which is the
compositum of the rationality fields ${\mathbb Q}(\Pi)$ and ${\mathbb Q}(\Sigma)$ of $\Pi$ and $\Sigma$ respectively. 
\end{thm}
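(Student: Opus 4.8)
The plan is to prove the algebraicity result by realizing $L_f(1/2, \Pi \times \Sigma)$ as a cohomological cup product and tracking its behavior under $\mathrm{Aut}(\mathbb{C})$-action through each step. First I would recall the integral representation of the Rankin--Selberg $L$-function: for $\phi_\Pi \in \Pi$ and $\phi_\Sigma \in \Sigma$, the global zeta integral $Z(s,\phi_\Pi,\phi_\Sigma) = \int_{\mathrm{GL}_{n-1}(\mathbb{Q})\backslash \mathrm{GL}_{n-1}(\mathbb{A})} \phi_\Pi(\iota(g))\,\phi_\Sigma(g)\,|\det g|^{s-1/2}\,dg$ unfolds to an Euler product giving $L(s,\Pi\times\Sigma)$ times local zeta integrals, by the theory of Jacquet, Piatetski-Shapiro and Shalika. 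The key idea, following Harder, Mahnkopf, and Kasten--Schmidt, is to interpret this period integral cohomologically: the cuspidal class $[\Pi]$ lives in $H^{b_n}_{\mathrm{cusp}}(S_{G_n}, \widetilde{\mathcal{M}}_{\mu^\vee})$, the class $[\Sigma]$ in the appropriate degree for $G_{n-1}$, and pulling back $[\Pi]$ along the embedding $\iota: \mathrm{GL}_{n-1}\hookrightarrow \mathrm{GL}_n$, cupping with $[\Sigma]$, contracting the coefficient systems (which is where the condition $\mu^\vee \succ \lambda$ is needed so that the branching $\mathcal{M}_{\mu^\vee}\big|_{G_{n-1}} \otimes \mathcal{M}_{\lambda^\vee}$ contains the trivial representation), and integrating over the locally symmetric space $S_{G_{n-1}}$ of top degree, produces a number proportional to $L_f(1/2,\Pi\times\Sigma)$.

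Next I would separate archimedean and nonarchimedean contributions. The cohomological interpretation factors the zeta integral into a product over all places; at finite places the local zeta integrals are essentially rational functions of the Satake/Langlands parameters and, with the rational structures on the Whittaker models (which exist by results on rationality of Whittaker new/test vectors, already invoked in \S\ref{subsec:periods}), they contribute the arithmetically-normalized local $L$-factors up to elements of $\mathbb{Q}(\Pi,\Sigma)$; the choice of a rational generator of the one-dimensional space of $\mathrm{GL}_{n-1}$-invariant functionals entering the branching step contributes another $\mathbb{Q}(\Pi,\Sigma)$-factor. The archimedean zeta integral is absorbed into the definition of the period $p_\infty(\mu,\lambda)$, and its \emph{nonvanishing} — which is exactly what cannot be established unconditionally — is precisely Hypothesis~\ref{hypo:nonvanishing}. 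Comparing the two chosen rational structures on the relevant one-dimensional cohomology space (the Whittaker/Betti comparison) is what produces the periods $p^\epsilon(\Pi)$ and $p^\eta(\Sigma)$, and the Gauss sum $\mathcal{G}(\omega_{\Sigma_f})$ enters because passing between the Whittaker model of $\Sigma$ and its contragredient (needed to make the pairing $\Pi\times\Sigma$ into a pairing of $\Pi$ against something cohomological) introduces the Gauss sum of the central character, exactly as in the classical $\mathrm{GL}_2$ case.

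Then the $\sigma$-equivariance would be proved by applying $\sigma \in \mathrm{Aut}(\mathbb{C})$ to the entire cohomological identity. The cup product, the pullback along $\iota$, the contraction of coefficient systems, and the evaluation of a rational class against a rational cycle are all defined over $\mathbb{Q}$ (or over $\mathbb{Q}(\Pi,\Sigma)$) and hence commute with $\sigma$; the arithmetic normalization of the finite $L$-factors is designed so that $\sigma\big(L_f(1/2,\Pi\times\Sigma)/(\text{finite part normalization})\big) = L_f(1/2,\Pi^\sigma\times\Sigma^\sigma)/(\text{same for } \sigma)$. The periods $p^\epsilon(\Pi), p^\eta(\Sigma)$ transform correctly by their very definition (they are defined up to $\mathbb{Q}(\Pi)$, resp. $\mathbb{Q}(\Sigma)$, so $\sigma$ sends $p^\epsilon(\Pi)$ to $p^\epsilon(\Pi^\sigma)$ up to the rationality field), the Gauss sum transforms by the standard formula $\sigma(\mathcal{G}(\omega_{\Sigma_f})) = \mathcal{G}(\omega_{\Sigma_f^\sigma})$ up to an element of the rationality field (using that $\omega_\Sigma$ has bounded conductor), and $p_\infty(\mu,\lambda)$ is a fixed transcendental number depending only on the weights $\mu,\lambda$, hence $\sigma$-invariant in the required sense. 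Dividing the two identities yields the displayed formula; the second displayed relation $\sim_{\mathbb{Q}(\Pi,\Sigma)}$ is then immediate by taking $\sigma$ to range over all of $\mathrm{Aut}(\mathbb{C})$ fixing $\mathbb{Q}(\Pi,\Sigma)$. The main obstacle — apart from the unavoidable reliance on Hypothesis~\ref{hypo:nonvanishing} — is the careful bookkeeping in the branching/contraction step: one must check that the condition $\mu^\vee\succ\lambda$ makes the relevant $\mathrm{Hom}$-space exactly one-dimensional, choose a generator rationally, and verify it is compatible with $\sigma$-action at the archimedean place (so that the same $p_\infty(\mu,\lambda)$ appears on both sides of the equivariance identity rather than a $\sigma$-twisted version); tracking the exact power of the Gauss sum and confirming no stray transcendental factor escapes into the "period" part is the other delicate point.
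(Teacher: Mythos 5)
Your architecture is the paper's: unfold the zeta integral, reinterpret it as a cup product of cuspidal classes in degrees $b_n$ and $b_{n-1}$ evaluated against the Harder--Mahnkopf cycle (using $b_n+b_{n-1}=\dim$ and the multiplicity-one branching forced by $\mu^\vee\succ\lambda$), absorb the archimedean integral into $p_\infty(\mu,\lambda)$ subject to Hypothesis~\ref{hypo:nonvanishing}, and apply $\sigma$ to the resulting identity (Theorem~\ref{thm:main-identity}). However, two of the bookkeeping claims you wave at are exactly where the content lies, and as stated they are wrong or unsupported.

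First, the Gauss sum $\mathcal{G}(\omega_{\Sigma_f})$ does \emph{not} arise from ``passing between the Whittaker model of $\Sigma$ and its contragredient.'' It arises from the specific choice of the local Whittaker vector $w_{\Pi,v}$ of $\Pi$ at the places $v\in S_\Sigma$ where $\Sigma_v$ is ramified: that vector is supported on $N_{n-1}({\mathbb Q}_v)t_{\Sigma_v}K_{n-1}(\mathfrak{f}(\Sigma_v))$ and transforms by $\omega_{\Sigma_v}^{-1}$ on the mirahoric subgroup, so the $t_{\sigma,n}$-twisted action of $\sigma$ produces the factor $\sigma(\omega_{\Sigma_v}(t_{\sigma,v}))$; the product over $v\in S_\Sigma$ is $\sigma(\omega_{\Sigma_f}(t_\sigma))=\sigma(\mathcal{G}(\omega_{\Sigma_f}))/\mathcal{G}(\omega_{\Sigma_f^\sigma})$ (Proposition~\ref{prop:galois-classes} and Corollary~\ref{cor:gauss-sum}). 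Your mechanism would not produce this factor, and without the correct one the displayed $\sigma$-equivariance fails. Second, your assertion that the finite places contribute ``the arithmetically-normalized local $L$-factors up to elements of ${\mathbb Q}(\Pi,\Sigma)$'' conceals the principal refinement of this paper over Mahnkopf and Kazhdan--Mazur--Schmidt: one must actually prove $\sigma(L(1/2,\Pi_v\times\Sigma_v))=L(1/2,\Pi_v^\sigma\times\Sigma_v^\sigma)$ at the ramified places (Proposition~\ref{prop:galois-localvalues}). This requires Henniart's description of how local Langlands interacts with ${\rm Aut}({\mathbb C})$ — the character twist $\epsilon_\sigma^{(1-n)(2-n)}$ is trivial only because $(1-n)(2-n)$ is even — together with Clozel's lemma and the evenness of $n(n-1)$ to handle the half-integral point $s=1/2$; the earlier works sidestepped this by twisting by highly ramified characters. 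You also need the identity $\sigma(c_{\Pi_v})=c_{\Pi_v^\sigma}$ for the constants relating the normalized new vector to the essential vector (Proposition~\ref{prop:galois-c}), and the determination of the signs via $\epsilon\eta=(-1)^n$, which is forced by the action of $\pi_0$ on the cycle (Lemma~\ref{lem:stable}); neither appears in your sketch.
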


The proof of the above theorem is based on a cohomological interpretation of the Rankin--Selberg zeta integral.  
That the Rankin--Selberg integral for 
${\rm GL}_n \times {\rm GL}_{n-1}$ admits a cohomological interpretation has been observed by several people. See 
especially, Schmidt \cite{schmidt}, 
Kazhdan, Mazur and Schmidt \cite{kazhdan-mazur-schmidt}, Mahnkopf \cite{mahnkopf-crelle}, \cite{mahnkopf-jussieu}, and 
Kasten and Schmidt \cite{kasten-schmidt}. However, for the application we have in mind, which is Deligne's conjecture for symmetric power $L$-functions, the above works are not suitable because of various assumptions made in those papers. We prove the above theorem while 
refining their techniques, especially those of Mahnkopf \cite{mahnkopf-jussieu}. The refinements are of two kinds:
\begin{enumerate}

\item We do not twist by a highly ramified character at places where $\Pi$ or $\Sigma$ is ramified as is done in 
\cite{mahnkopf-jussieu}. Instead, we 
use the observation that local special values are suitably rational (Proposition~\ref{prop:galois-localvalues}), and the possibly transcendental part of a global $L$-function is already captured by partial $L$-functions.  

\item The above papers are tailored toward constructing $p$-adic $L$-functions, in view of which there is a certain unipotent averaging that they consider at a prime where everything else is unramified. We consider the usual 
Rankin--Selberg integrals
without any such unipotent averaging. It is quite likely that our theorem above, plus a refinement of the 
period relations proved in our paper with Shahidi \cite{raghuram-shahidi-imrn}, can also be used to construct $p$-adic $L$-functions. 
\end{enumerate}

We briefly sketch the proof of Theorem~\ref{thm:rankin-selberg}. 
We make a very specific choice of Whittaker vectors for the two representations, and show that the Rankin--Selberg 
zeta integral of the cusp forms corresponding to these vectors, at $s =1/2$, can be interpreted as a pairing between certain cohomology classes. 
We choose a Whittaker vector $w_{\Pi_f}$ for the finite part $\Pi_f$, and  
let $\phi_{\Pi}$ be the cusp form corresponding
to $w_{\Pi_f}\otimes w_{\Pi_{\infty}}$, where $w_{\Pi_{\infty}}$ is a Whittaker vector at infinity.
Similarly, for a specific vector $w_{\Sigma_f}$, consider a cusp form $\phi_{\Sigma}$. 
The Rankin--Selberg integral at $1/2$ of these cusp forms, denoted $I(1/2,\phi_{\Pi},\phi_{\Sigma})$, is, up to 
controllable quantities, the $L$-value we are interested in (Proposition~\ref{prop:rankin-selberg}).
On the other hand, it may be interpreted as follows. To $w_{\Pi_f}$ is attached a cuspidal cohomology 
class $\vartheta_{\Pi}$ in $H^{b_n}_{\rm cusp}(F_n, \mathcal{M}_{\mu}^{\vee})$, where $b_n$ is the 
bottom degree of the cuspidal range for ${\rm GL}_n$, $F_n$ is a tentative notation for a locally symmetric space associated to ${\rm GL}_n$, and $\mathcal{M}_{\mu}^{\vee}$ is the sheaf on $F_n$ corresponding to the dual of the finite dimensional representation $M_{\mu}$ with highest weight $\mu$. Working with the dual of $M_{\mu}$ is only for convenience. Similarly, we have $\vartheta_{\Sigma} \in H^{b_{n-1}}_{\rm cusp}(F_{n-1}, \mathcal{M}_{\lambda}^{\vee})$. 
The hypothesis $\mu^{\vee} \succ \lambda$ implies that there is a canonical $G_{n-1}$ pairing 
$M_{\mu}^{\vee} \times M_{\lambda}^{\vee} \to {\mathbb Q}$.
The natural embedding ${\rm GL}_{n-1} \to {\rm GL}_n$ induces a proper map $\iota: F_{n-1} \to F_n$. We consider 
the wedge product $\vartheta_{\Sigma} \wedge \iota^*\vartheta_{\Pi}$, and observe that this happens to be a top-degree form on $F_{n-1}$ because $b_{n-1}+ b_n = {\rm dim}(F_{n-1})$; this numerical coincidence is at the heart of 
the proof. Integrating the top degree form over all of $F_{n-1}$ gives, after unravelling the definitions and using 
the calculation of the Rankin--Selberg integrals mentioned above, nothing but 
$L_f(1/2, \Pi \times \Sigma)\langle [\Sigma_{\infty}], [\Pi_{\infty}]\rangle$. This is the content of the 
{\it main identity} proved in Theorem~\ref{thm:main-identity}. The quantity 
$\langle [\Sigma_{\infty}], [\Pi_{\infty}]\rangle$, which depends only on the representations at infinity, is 
a linear combination of Rankin--Selberg integrals for `cohomological vectors'. One expects that it is nonzero.
We have not attempted a proof of this nonvanishing hypothesis, and so we need to assume its validity.  
The proof of Theorem~\ref{thm:rankin-selberg} follows since we can control algebraicity properties of the pairing of the classes $\vartheta_{\Pi}$ and $\vartheta_{\Sigma}$.

We now come to the second main theorem of this paper, which is to understand the behaviour of $L$-values under 
twisting by characters. We refer the reader to our papers with Shahidi \cite{raghuram-shahidi-aims} and 
\cite{raghuram-shahidi-imrn} for motivational background for such results. We note that results of this kind are
predicted by the results and conjectures of Blasius \cite{blasius2} and Panchishkin \cite{panchishkin}, both of whom independently calculated the behaviour of Deligne's periods attached to a motive upon twisting by Artin motives. 
Our second theorem is:

\begin{thm}
\label{thm:twisted}
Let $\Pi$ and $\Sigma$ be as in Theorem~\ref{thm:rankin-selberg}. Let $\xi$ be an even Dirichlet character which
we identify with the corresponding Hecke character of ${\mathbb Q}$. We have
$$
L_f(1/2, (\Pi\otimes\xi) \times \Sigma) \sim_{{\mathbb Q}(\Pi,\Sigma,\xi)} 
\mathcal{G}(\xi_f)^{n(n-1)/2}L_f(1/2,\Pi \times \Sigma),
$$
where, by $\sim_{{\mathbb Q}(\Pi,\Sigma,\xi)}$, we mean up to an element of the number field 
${\mathbb Q}(\Pi,\Sigma,\xi)$ which is the compositum
of the rationality fields ${\mathbb Q}(\Pi)$, ${\mathbb Q}(\Sigma)$ and ${\mathbb Q}(\xi)$ of $\Pi$, $\Sigma$, and 
$\xi$ respectively. Moreover, if $L_f(1/2, \Pi \times \Sigma) \neq 0$, then for any $\sigma \in {\rm Aut}({\mathbb C})$
we have 
$$
\sigma \left( \frac{L_f(1/2, (\Pi\otimes\xi) \times \Sigma)}
{\mathcal{G}(\xi_f)^{n(n-1)/2}\, L_f(1/2,\Pi \times \Sigma)}\right) = 
\frac{L_f(1/2, (\Pi^{\sigma}\otimes\xi^{\sigma}) \times \Sigma^{\sigma})}
{\mathcal{G}(\xi_f^{\sigma})^{n(n-1)/2}\, L_f(1/2,\Pi^{\sigma} \times \Sigma^{\sigma})}.
$$
\end{thm}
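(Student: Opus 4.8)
The plan is to deduce Theorem~\ref{thm:twisted} from Theorem~\ref{thm:rankin-selberg}, applied to the two pairs $(\Pi,\Sigma)$ and $(\Pi\otimes\xi,\Sigma)$, together with the period relations for the Whittaker periods under twisting by Dirichlet characters proved in \cite{raghuram-shahidi-imrn}.

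First I would check that $(\Pi\otimes\xi,\Sigma)$ satisfies the hypotheses of Theorem~\ref{thm:rankin-selberg}. Since $\xi$ is a Dirichlet character, $\Pi\otimes\xi$ is again a regular algebraic cuspidal automorphic representation of ${\rm GL}_n({\mathbb A})$; and since $\xi$ is \emph{even}, its archimedean component $\xi_\infty$ is trivial, so $(\Pi\otimes\xi)_\infty=\Pi_\infty$. Hence $\Pi\otimes\xi\in{\rm Coh}(G_n,\mu^\vee)$ for the \emph{same} weight $\mu$; consequently $\mu^\vee\succ\lambda$ still holds, $s=1/2$ is again critical for $L_f(s,(\Pi\otimes\xi)\times\Sigma)$ (the critical set depends only on the archimedean $L$-factor, i.e.\ only on $\Pi_\infty$ and $\Sigma_\infty$), and Hypothesis~\ref{hypo:nonvanishing}, which concerns only the archimedean representations, holds for $(\Pi\otimes\xi,\Sigma)$ just as for $(\Pi,\Sigma)$. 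For the same reason the canonical signs $\epsilon,\eta$ attached to the pair, and the archimedean period $p_\infty(\mu,\lambda)$, are literally unchanged. This is where the evenness of $\xi$ is used: for odd $\xi$ one twists $\Pi_\infty$ by the sign character and would have to track the resulting changes in the signs and in the critical set.

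Next I would write down the two instances of Theorem~\ref{thm:rankin-selberg} and take their quotient. As $\Sigma$ is untwisted, the factors $p^\eta(\Sigma)$, $\mathcal{G}(\omega_{\Sigma_f})$ and $p_\infty(\mu,\lambda)$ occur identically in both, so after cancellation one is left with
\[
L_f(1/2,(\Pi\otimes\xi)\times\Sigma)\ \sim_{{\mathbb Q}(\Pi,\Sigma,\xi)}\ \frac{p^\epsilon(\Pi\otimes\xi)}{p^\epsilon(\Pi)}\;L_f(1/2,\Pi\times\Sigma).
\]
Now I would invoke the period relation of \cite{raghuram-shahidi-imrn}, which for an even Dirichlet character $\xi$ specializes to
\[
p^\epsilon(\Pi\otimes\xi)\ \sim_{{\mathbb Q}(\Pi,\xi)}\ \mathcal{G}(\xi_f)^{\,n(n-1)/2}\;p^\epsilon(\Pi)
\]
(the exponent and the sign appearing on the right of the general relation there specialize to these values when $\xi$ is even; the Gauss sum of the central character $\omega_{\Pi\otimes\xi}=\omega_\Pi\,\xi^{n}$ is absorbed via the standard product relation for Gauss sums). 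Substituting this into the previous display gives the first assertion. As a check with Deligne's conjecture, $n(n-1)/2$ is exactly the dimension of the $(-1)$-eigenspace of the infinite Frobenius on the rank-$n(n-1)$ motive $M(\Pi)\otimes M(\Sigma)$: since exactly one of $n$ and $n-1$ is even, that motive is balanced under complex conjugation, and $\mathcal{G}(\xi_f)^{n(n-1)/2}$ is precisely what the results of Blasius \cite{blasius2} and Panchishkin \cite{panchishkin} predict for the twist of Deligne's period by the Artin motive attached to $\xi$.

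Finally, for the ${\rm Aut}({\mathbb C})$-equivariant refinement I would rerun the argument with the equivariant statements. Assuming $L_f(1/2,\Pi\times\Sigma)\neq 0$, Theorem~\ref{thm:rankin-selberg} shows $L_f(1/2,\Pi^\sigma\times\Sigma^\sigma)\neq 0$ for every $\sigma\in{\rm Aut}({\mathbb C})$, so all denominators below are nonzero. For such $\sigma$, applying the equivariance in Theorem~\ref{thm:rankin-selberg} to each of the two pairs, combining, and using the equivariant form of the period relation of \cite{raghuram-shahidi-imrn} (together with the standard behaviour of $\mathcal{G}(\xi_f)$ under $\sigma$, which compares it with $\mathcal{G}(\xi_f^\sigma)$ up to an algebraic factor in ${\mathbb Q}(\xi)$) yields the claimed $\sigma$-equivariance of $L_f(1/2,(\Pi\otimes\xi)\times\Sigma)/(\mathcal{G}(\xi_f)^{n(n-1)/2}L_f(1/2,\Pi\times\Sigma))$. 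I do not expect a single deep obstacle; no nonvanishing hypothesis beyond Hypothesis~\ref{hypo:nonvanishing} is required. The delicate points are bookkeeping: making sure the two applications of Theorem~\ref{thm:rankin-selberg} genuinely share the same auxiliary period and the same signs (which is what forces $\xi$ to be even), and matching the precise normalization of the period relation in \cite{raghuram-shahidi-imrn} to the clean exponent $n(n-1)/2$.
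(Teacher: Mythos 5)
Your proposal is correct and follows essentially the same route as the paper: apply Theorem~\ref{thm:rankin-selberg} to both pairs $(\Pi,\Sigma)$ and $(\Pi\otimes\xi,\Sigma)$ (noting that evenness of $\xi$ keeps $\mu$, $\lambda$, the signs, and $p_\infty(\mu,\lambda)$ unchanged), cancel the common factors, and invoke the period relation (\ref{eqn:variation2}) from \cite{raghuram-shahidi-imrn} to convert $p^{\epsilon}(\Pi\otimes\xi)/p^{\epsilon}(\Pi)$ into $\mathcal{G}(\xi_f)^{n(n-1)/2}$. The only stray remark is your parenthetical about absorbing $\mathcal{G}(\omega_{\Pi\otimes\xi})$: since the twist is placed on $\Pi$ while the Gauss-sum factor in Theorem~\ref{thm:rankin-selberg} involves $\omega_{\Sigma_f}$, no such absorption is needed (that bookkeeping only arises in the paper's subsequent remark about instead twisting $\Sigma$ by $\xi$).
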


We remark that our proof of Theorem~\ref{thm:twisted} uses Theorem~\ref{thm:rankin-selberg}, and so  
is subject to the assumption made in Hypothesis~\ref{hypo:nonvanishing}.

We now describe an application of Theorem~\ref{thm:rankin-selberg} to the special values of symmetric power 
$L$-functions. Let $\varphi$ be a primitive holomorphic cusp form on the upper half plane of weight $k$, for 
$\Gamma_0(N)$, with nebentypus character $\omega$. We denote this as $\varphi \in S_k(N,\omega)_{\rm prim}$. 
For any integer $r \geq 1$, consider the  
$r$-th symmetric power $L$-function $L(s, {\rm Sym}^r \varphi, \xi)$ attached to $\varphi$, twisted by a 
Dirichlet character $\xi$. The sign of $\xi$ is defined as $\epsilon_{\xi} = \xi(-1)$.
(We will think of $\xi$ as a Hecke character of ${\mathbb Q}$.)
Our final theorem in this
paper gives an algebraicity theorem for certain critical values of such $L$-functions when $r$ is an odd integer
$\leq 7$.

\begin{thm}
\label{thm:sym-357}
Let $\varphi \in S_k(N,\omega)_{\rm prim}$, $n$ a positive integer $\leq 4$, and $\xi$ a Dirichlet character.
Let $m$ be the critical integer for $L_f(s,{\rm Sym}^{2n-1}(\varphi),\xi)$ given by: 
\begin{enumerate}
\item If $k$ is even, then we assume $k \geq 4$ and let $m = ((2n-1)(k-1)+3)/2$.
\item If $k$ is odd, then we assume $k \geq 3$ and let $m = ((2n-1)(k-1)+2)/2$. 
\end{enumerate} 
There exists nonzero complex numbers $p^{\epsilon}(\varphi, 2n-1)$ depending on the form $\varphi$, the 
integer $n$, and a sign $\epsilon \in \{\pm \}$, and there exists a nonzero complex number $p(m,k)$ depending on
the critical point $m$ and the weight $k$, such that for any $\sigma \in {\rm Aut}({\mathbb C})$ we have 
$$
\sigma\left(\frac{L_f(m, {\rm Sym}^{2n-1}(\varphi), \xi)}
{p^{\epsilon_{\xi}}(\varphi, 2n-1)\, p(m,k)\, \mathcal{G}(\xi_f)^{n}}\right) = 
\frac{L_f(m, {\rm Sym}^{2n-1}(\varphi^{\sigma}), \xi^{\sigma})}
{p^{\epsilon_{\xi}}(\varphi^{\sigma}, 2n-1) \, p(m,k)\, \mathcal{G}(\xi_f^{\sigma})^{n}}.
$$
In particular, 
$$
L_f(m, {\rm Sym}^{2n-1}(\varphi), \xi)\  
\sim_{{\mathbb Q}(\varphi,\xi)} \ 
p^{\epsilon_{\xi}}(\varphi, 2n-1)\, p(m,k)\, \mathcal{G}(\xi_f)^{n},
$$
where, by $\sim_{{\mathbb Q}(\varphi, \xi)}$, we mean up to an element of the number field generated 
by the Fourier coefficients of $\varphi$ and the values of $\xi$. 

Further, if we assume Langlands' functoriality, in as much as assuming that the transfer of automorphic representations 
holds for the $L$-homomorphism ${\rm Sym}^l : {\rm GL}_2({\mathbb C}) \to {\rm GL}_{l+1}({\mathbb C})$ for all 
integers $l \geq 1$, then the above statements about critical values holds for all odd positive integers $2n-1$.
\end{thm}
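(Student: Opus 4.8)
The plan is to deduce Theorem~\ref{thm:sym-357} from Theorem~\ref{thm:rankin-selberg} by realising $L_f(s,{\rm Sym}^{2n-1}(\varphi),\xi)$ as the ``top constituent'' of a Rankin--Selberg $L$-function for ${\rm GL}_{n+1}\times{\rm GL}_n$, and then stripping off the remaining constituents by a downward induction on $n$. Let $\pi_\varphi$ be the cuspidal automorphic representation of ${\rm GL}_2(\mathbb{A})$ attached to $\varphi$; since $k\geq 2$ it is regular algebraic, with central character $\omega_\varphi$ (the finite-order Hecke character attached to the nebentypus $\omega$). If $\varphi$ is of CM type then each ${\rm Sym}^{2n-1}\pi_\varphi$ is an isobaric sum of twists of Hecke characters of the relevant imaginary quadratic field, and the asserted algebraicity reduces to the classical results of Shimura and Katz on $L$-values of Hecke characters; I would treat this case separately and assume henceforth that $\varphi$ is not of CM type. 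Then, by the theorems of Gelbart--Jacquet, Kim--Shahidi, and Kim, for every $r\leq 4$ the lift ${\rm Sym}^r\pi_\varphi$ is a regular algebraic \emph{cuspidal} automorphic representation of ${\rm GL}_{r+1}(\mathbb{A})$, with highest weight $\mu_r\in X_0^+(T_{r+1})$ determined by the $r+1$ distinct weights $\{i(k-1):0\leq i\leq r\}$; for $n\leq 4$ both ${\rm Sym}^n\pi_\varphi$ and ${\rm Sym}^{n-1}\pi_\varphi$ are thus available. The ${\rm GL}_2$ Clebsch--Gordan rule, carried out with the determinant twist, then gives the factorisation
\[
L\bigl(s,{\rm Sym}^n\pi_\varphi\times{\rm Sym}^{n-1}\pi_\varphi\bigr)\ =\ \prod_{j=0}^{n-1}L\bigl(s,{\rm Sym}^{\,2n-1-2j}\pi_\varphi\otimes\omega_\varphi^{\,j}\bigr),
\]
in which the $j=0$ term is ${\rm Sym}^{2n-1}$ and every $j\geq1$ term is a strictly lower odd symmetric power, twisted by a Dirichlet character.

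The induction is on $n$; the base case $n=1$ is the pair $(\pi_\varphi,\mathbf{1})$ on ${\rm GL}_2\times{\rm GL}_1$, for which Theorem~\ref{thm:rankin-selberg} specialises to Harder's proof \cite{harder} of Shimura's theorem \cite{shimura2}, and $n=2$ recovers, by a new route, the theorem of Garrett and Harris \cite{garrett-harris} on ${\rm Sym}^3$. For the inductive step I would set $\Pi:={\rm Sym}^n\pi_\varphi\otimes\xi\otimes|\cdot|^{a}$ and $\Sigma:={\rm Sym}^{n-1}\pi_\varphi\otimes|\cdot|^{b}$ and choose the half-integral exponents $a,b$ so that (i) $\Pi$ and $\Sigma$ are regular algebraic; (ii) $\mu^\vee\succ\lambda$ for the pair (an interlacing condition on the weights $\{i(k-1)\}$, which holds after the twist); and (iii) under the shift between the unitary and arithmetic normalisations, the point $s=1/2$ for $L_f(s,\Pi\times\Sigma)$ matches the critical integer $m$ of the statement for $L_f(s,{\rm Sym}^{2n-1}(\varphi),\xi)$. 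The two parity cases of the theorem and the bounds $k\geq 4$ (resp.\ $k\geq 3$) are exactly what allow such a choice of $(a,b)$ and make $m$ critical; one then checks directly that $s=1/2$ is critical for $L_f(s,\Pi\times\Sigma)$, since it is critical for each factor of the factorisation above.

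With this in place, Theorem~\ref{thm:rankin-selberg} (under Hypothesis~\ref{hypo:nonvanishing}) gives, $\sigma$-equivariantly,
\[
L_f(1/2,\Pi\times\Sigma)\ \sim_{\mathbb{Q}(\Pi,\Sigma,\xi)}\ p^{\epsilon}(\Pi)\,p^{\eta}(\Sigma)\,\mathcal{G}(\omega_{\Sigma_f})\,p_\infty(\mu,\lambda),
\]
while the factorisation, after the shift, identifies $L_f(1/2,\Pi\times\Sigma)$ with $L_f(m,{\rm Sym}^{2n-1}(\varphi),\xi)$ times $\prod_{j=1}^{n-1}L_f\bigl(m_j,{\rm Sym}^{\,2n-1-2j}(\varphi),\xi\omega_\varphi^{\,j}\bigr)$ for explicit critical integers $m_j$. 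Each lower factor is covered by the inductive hypothesis, and is nonzero: under the shift in (iii) every $m_j$ falls at $s=3/2$ (if $k$ is even) or $s=1$ (if $k$ is odd) in the unitary normalisation, hence at or beyond the edge of the critical strip, where standard and Rankin--Selberg $L$-functions of cuspidal automorphic representations do not vanish by Jacquet--Shalika and Shahidi; for the constituents such as ${\rm Sym}^5\pi_\varphi$ that are not known to be cuspidal, this nonvanishing is instead read off from the factorisation of the $n=3$ step, i.e.\ from the genuine ${\rm GL}_4\times{\rm GL}_3$ Rankin--Selberg $L$-function $L(s,{\rm Sym}^3\pi_\varphi\times{\rm Sym}^2\pi_\varphi)$. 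Dividing, using the period relations of \cite{raghuram-shahidi-imrn} to extract $\mathcal{G}(\xi_f)$ from $p^{\epsilon}(\Pi)$ and to compare $\mathcal{G}((\xi\omega_\varphi^{\,j})_f)$ with $\mathcal{G}(\xi_f)$, and collecting the remaining periods, I would \emph{define} $p^{\epsilon_\xi}(\varphi,2n-1)$ to be the resulting product of periods of symmetric powers of $\pi_\varphi$ (together with powers of $2\pi i$ and of $\mathcal{G}((\omega_\varphi)_f)$), which depends on $\xi$ only through $\epsilon_\xi=\xi(-1)$, and $p(m,k)$ to be the corresponding archimedean quantity obtained from $p_\infty(\mu,\lambda)$ and the $p(m_j,k)$; a short count shows the surviving power of $\mathcal{G}(\xi_f)$ is $\binom{n+1}{2}-\sum_{j=1}^{n-1}(n-j)=n$, as asserted. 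Galois equivariance propagates through all of these manipulations.

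The main obstacle I foresee is the bookkeeping of periods rather than any single analytic input: one must verify that the periods assembled in the last step really factor as $p^{\epsilon_\xi}(\varphi,2n-1)\,p(m,k)\,\mathcal{G}(\xi_f)^n$, with $p^{\epsilon_\xi}(\varphi,2n-1)$ independent of $\xi$ beyond its sign and $p(m,k)$ independent of $\varphi$, and this is precisely where the period relations of \cite{raghuram-shahidi-imrn} are essential; without them one obtains only the weaker statement with $\xi$-dependent periods. The second structural point is that the induction must be organised so that no symmetric power of degree exceeding $4$ ever has to be assumed cuspidal automorphic — this is why the unconditional result stops at $n\leq 4$. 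Granting Langlands' functoriality for the maps ${\rm Sym}^l:{\rm GL}_2\to{\rm GL}_{l+1}$, every ${\rm Sym}^r\pi_\varphi$ is an automorphic representation, cuspidal since $\varphi$ is non-CM, the entire induction goes through verbatim, and the theorem follows for all odd $2n-1$.
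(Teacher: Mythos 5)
Your overall strategy coincides with the paper's: realize $L_f(s,{\rm Sym}^{2n-1}\varphi,\xi)$ as the top factor of $L_f(s,{\rm Sym}^n\pi_\varphi\times{\rm Sym}^{n-1}\pi_\varphi)$ via Clebsch--Gordan (Corollary~\ref{cor:factorize}), induct on $n$ starting from the ${\rm GL}_2\times{\rm GL}_1$ case of Harder--Shimura, apply Theorem~\ref{thm:rankin-selberg} to suitably twisted consecutive symmetric-power lifts, invert the lower factors using nonvanishing at the edge of the critical strip (Jacquet--Shalika for the cuspidal constituents, Kim--Shahidi for ${\rm Sym}^5$ and ${\rm Sym}^7$), and carry out the Gauss-sum bookkeeping with the period relations of \cite{raghuram-shahidi-imrn}; your count $\binom{n+1}{2}-\sum_{j=1}^{n-1}(n-j)=n$ is exactly the one the paper obtains. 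Your separate disposal of the CM case is a sensible extra precaution not spelled out in the paper, which simply assumes cuspidality of the lifts.

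The one concrete gap is in your choice of twists. You always put the arbitrary character $\xi$ on ${\rm Sym}^n\pi_\varphi$ and give ${\rm Sym}^{n-1}\pi_\varphi$ only a norm twist. When $n$ is even, ${\rm Sym}^n\pi_\varphi$ is an \emph{even} symmetric power living on ${\rm GL}_{n+1}$ with $n+1$ odd, and Theorem~\ref{thm:symmetric-cohomology} imposes a parity condition on the sign of the twisting character at infinity, namely $\epsilon\equiv n(k-1)/2\pmod 2$. An arbitrary Dirichlet character fails this for one of the two values of $\epsilon_\xi$, and no choice of the exponent $a$ in $|\cdot|^a$ can correct a sign; so your condition (i) is unachievable as written for half of the characters $\xi$ --- precisely the situation in which the statement's dependence on $\epsilon_\xi$ matters. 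The paper's Proposition~\ref{prop:twisting} circumvents this by always attaching $\xi$ together with the norm twist to the \emph{odd} symmetric power of the pair (where Theorem~\ref{thm:symmetric-cohomology} imposes no condition on the sign), and by twisting the even symmetric power by a fixed power $\theta^{n/2}$ (resp.\ $\theta^{(n-1)/2}$) of an auxiliary odd quadratic character to meet the parity requirement when $k$ is even. Since $L(s,(\Pi\otimes\xi)\times\Sigma)=L(s,\Pi\times(\Sigma\otimes\xi))$ and the central-character Gauss sum $\mathcal{G}(\omega_{\Sigma_f})$ compensates for the smaller exponent in the period relation on the smaller group, the net power of $\mathcal{G}(\xi_f)$ is unchanged (this is verified in the remarks after the proof of Theorem~\ref{thm:twisted}). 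You should reorganize your steps (i)--(iii) accordingly and absorb the resulting fixed Gauss sums $\mathcal{G}(\theta_f)$ and $\mathcal{G}(\omega_f)$ into $p^{\epsilon_\xi}(\varphi,2n-1)$, as the paper's explicit formulas do.
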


We remark that our proof of Theorem~\ref{thm:sym-357} uses Theorem~\ref{thm:rankin-selberg}, and so  
is subject to the assumption made in Hypothesis~\ref{hypo:nonvanishing}. Let $\pi(\varphi)$ be 
the cuspidal automorphic representation attached to $\varphi$, and let ${\rm Sym}^r(\pi(\varphi))$ denote 
the $r$-th symmetric power transfer; it is known to exist for $r \leq 4$ by the work of 
Gelbart and Jacquet \cite{gelbart-jacquet},  
Kim and Shahidi \cite{kim-shahidi-annals}, and Kim \cite{kim}.
The proof of Theorem~\ref{thm:sym-357} is obtained 
by recursively applying Theorem~\ref{thm:rankin-selberg} to the pair $({\rm Sym}^n(\pi(\varphi)), 
{\rm Sym}^{n-1}(\pi(\varphi)))$, up to appropriate twisting (Proposition~\ref{prop:twisting}).
The critical point $m$ that we consider is on the right edge of symmetry when $k$ is odd, and is one unit to the right of the center of symmetry when $k$ is even. The quantity $p^{\epsilon}(\varphi,2n-1)$ is a combination of periods attached to ${\rm Sym}^r(\pi(\varphi))$ for $r \leq n$, and  
the quantity $p(m,k)$ is a combination of some of the $p_{\infty}(\mu,\lambda)$ that show up in 
Theorem~\ref{thm:rankin-selberg}. We expect that our results are compatible with Deligne's conjecture 
\cite[\S7.8]{deligne}, in view of which, 
we formulate Conjecture~\ref{con:period-relations} relating the periods attached to the representations $\Pi$ and $\Sigma$ as above, and Deligne's periods $c^{\pm}(M)$, where $M$ is the tensor product of the conjectural motives 
$M(\Pi)$ and $M(\Sigma)$. 

Finally, we note that in this paper we have considered only one critical point for any given $L$-function. In joint work with G\"unter Harder, we are investigating the algebraicity properties of ratios of successive critical values for 
the Rankin--Selberg $L$-functions considered above. This will then give us algebraicity results for ratios of 
successive critical values for the odd symmetric power $L$-functions considered above. 
The results of this investigation will appear elsewhere. On an entirely different note, we mention the recent work 
of Gan, Gross and Prasad \cite{gan-gross-prasad} on generalizations of the Gross-Prasad conjectures; they too are interested in the central critical value $L(1/2,\Pi\times\Sigma)$, albeit, from a different perspective.

\bigskip

{\small
{\it Acknowledgements:} It is a pleasure to thank Don~Blasius, G\"unter~Harder, Michael~Harris, Paul~Garrett, Ameya~Pitale and Freydoon~Shahidi for their interest and helpful discussions. I thank Jishnu~Biswas and 
Vishwambar~Pati for clarifying some topological details. Much of this work was carried out during a visit to the Max Planck Institute in 2008; I gratefully acknowledge their invitation and thank MPI for providing an excellent atmosphere.}

\section{Notations, conventions, and preliminaries}
\label{sec:prelims}

The algebraic group ${\rm GL}_n$ over ${\mathbb Q}$ will be denoted as $G_n$. Let $B_n = T_nN_n$ stand for the standard Borel subgroup of $G_n$ of all upper triangular matrices, $N_n$ the unipotent radical of $B_n$, and $T_n$ the diagonal 
torus. The center of $G_n$ will be denoted by $Z_n$. The identity element of $G_n$ will be denoted $1_n$.

We let $X^+(T_n)$ stand for the set of dominant (with respect to $B_n$) integral weights of $T_n$, and for 
$\mu \in X^+(T_n)$ we denote by $M_{\mu}$ the irreducible representation of 
$G_n({\mathbb C})$ with highest weight $\mu$. Note that $M_{\mu}$ is defined over ${\mathbb Q}$. Let $M_{\mu}^{\vee}$
denote the contragredient of $M_{\mu}$ and define the dual weight $\mu^{\vee}$ by $M_{\mu}^{\vee} = M_{\mu^{\vee}}$.
We let $X_0^+(T_n)$ stand for the subset of $X^+(T_n)$ consisting of 
pure weights \cite[(3.1)]{mahnkopf-jussieu}. If $\mu = (\mu_1,\dots,\mu_n) \in X^+(T_n)$ and 
$\lambda = (\lambda_1,\dots,\lambda_{n-1}) \in X^+(T_{n-1})$ then by $\mu \succ \lambda$ we mean the condition 
$\mu_1 \geq \lambda_1 \geq \mu_2 \geq \lambda_2 \geq \cdots \geq \lambda_{n-1} \geq \mu_n$, which ensures that 
$M_{\lambda}$ appears in the restriction to $G_{n-1}$ of $M_{\mu}$; in fact it appears with multiplicity one. 

We let ${\mathbb A}$ stand for the 
ad\`ele ring of ${\mathbb Q}$, and ${\mathbb A}_f$ the ring of finite ad\`eles. 
Following Borel--Jacquet \cite[\S4.6]{borel-jacquet}, we say an irreducible 
representation of $G_n({\mathbb A})$ is automorphic if it is isomorphic to an 
irreducible subquotient of the representation of $G_n({\mathbb A})$ on its
space of automorphic forms. We say an automorphic representation is cuspidal 
if it is a subrepresentation of the representation of $G_n({\mathbb A})$ on 
the space of cusp forms $\mathcal{A}_{\rm cusp}(G_n({\mathbb Q})\backslash G_n({\mathbb A}))$. 
The subspace of cusp forms realizing $\Pi$ will be denoted $V_{\Pi}$. 
For an automorphic representation $\Pi$ of $G_n({\mathbb A})$, we have
$\Pi = \Pi_{\infty} \otimes \Pi_f$, where $\Pi_{\infty}$ is a representation of $G_{n,\infty} = G_n({\mathbb R})$,
and $\Pi_f = \otimes_{v \neq \infty} \Pi_v$
is a representation of $G_n({\mathbb A}_f)$. The central character of any irreducible representation $\Theta$ will be denoted $\omega_{\Theta}$. The finite part of a global $L$-function is denoted $L_f(s,\Pi)$, and for any place $v$ the local $L$-factor at $v$ is denoted $L(s, \Pi_v)$.

We will let $K_{n,\infty}$ stand for ${\rm O}(n)Z_n({\mathbb R})$; it is the thickening of the 
maximal compact subgroup of $G_{n,\infty}$ by the center $Z_{n,\infty}$. Let $K_{n,\infty}^0$ be 
the topological connected component of $K_{n,\infty}$. For any group $\mathfrak{G}$ we will let 
$\pi_0(\mathfrak{G})$ stand for the group of connected components. 
We will identify $\pi_0(G_n) = \pi_0(K_{n,\infty}) \simeq  \{\pm 1 \} = \{\pm \}$. Note that 
$\delta_n = {\rm diag}(-1,1,\dots,1)$ represents the nontrivial element in $\pi_0(K_{n,\infty})$, and if $n$ is odd, the element $-1_n$ also represents this nontrivial element. We will further identify $\pi_0(K_{n,\infty})$ with its
character group $\pi_0(K_{n,\infty}){}^{\widehat{}}$. Let $K_{n,\infty}^1 = {\rm SO}(n)$. 

Let $\iota : G_{n-1} \to G_n$ be the map $g \mapsto \left(\begin{smallmatrix}g & \\ & 1\end{smallmatrix}\right)$. 
Then $\iota$ induces a map at the level of local and global groups, and between appropriate symmetric spaces 
of $G_{n-1}$ and $G_n$, all of which will also be denoted by $\iota$ again; we hope that this will cause no confusion. 
The pullback (of a subset, a function, a differential form, or a cohomology class) via $\iota$ will be 
denoted $\iota^*$.

Fix a global measure $dg$ on $G_n({\mathbb A})$ which is a product of local measures $dg_v$. The local measures
are normalized as follows: for a finite place $v$, if $\mathcal{O}_v$ is the ring of integers of ${\mathbb Q}_v$, then 
we assume that ${\rm vol}(G_n(\mathcal{O}_v)) = 1$; and at infinity assume that ${\rm vol}(K_{n,\infty}^1) = 1$. 

For a Dirichlet character $\chi$ modulo an integer $N$, following Shimura \cite{shimura1}, we define its Gauss sum 
$\mathfrak{g}(\chi)$ as the Gauss sum of its associated primitive character, say $\chi_0$ of conductor $c$, where
$\mathfrak{g}(\chi_0) = \sum_{a=0}^{c-1}\chi_0(a)e^{2\pi i a/c}$. For a Hecke character $\xi$ of ${\mathbb Q}$, by which we mean a continuous homomorphism $\xi: {\mathbb Q}^*\backslash {\mathbb A}^{\times} \to {\mathbb C}^*$, 
following Weil \cite[Chapter VII, \S7]{weil}, we define the Gauss sum of $\xi$ as follows: We let $\mathfrak{c}$ stand for the conductor ideal of $\xi_f$. 
We fix, once and for all, an additive character $\psi$ of ${\mathbb Q} \backslash {\mathbb A}$, as in Tate's thesis, namely, $\psi(x) = e^{2\pi i \Lambda(x)}$ with the $\Lambda$ as defined in 
\cite[\S 4.1]{tate}. Let $y = (y_v)_{v \neq \infty} 
\in {\mathbb A}_f^{\times}$ be such that 
${\rm ord}_v(y_v) = -{\rm ord}_v(\mathfrak{c})$.  The Gauss sum of $\xi$ is 
defined as  $\mathcal{G}(\xi_f,\psi_f,y) = \prod_{v \neq \infty} \mathcal{G}(\xi_v,\psi_v,y_v)$
where the local Gauss sum $\mathcal{G}(\xi_v,\psi_v,y_v)$ is defined as
$$
\mathcal{G}(\xi_v,\psi_v,y_v) = \int_{\mathcal{O}_v^{\times}} \xi_v(u_v)^{-1}\psi_v(y_vu_v)\, du_v.
$$
For almost all $v$, where everything in sight is unramified, we have $\mathcal{G}(\xi_v,\psi_v,y_v)=1$, and 
for all $v$ we have $\mathcal{G}(\xi_v,\psi_v,y_v) \neq 0$. (See, for example, Godement \cite[Eqn. 1.22]{godement}.)
Note that, unlike Weil, we do not normalize the Gauss sum to make it have absolute value one and we do not have any factor at infinity. Suppressing the dependence on $\psi$ and $y$, we denote $\mathcal{G}(\xi_f,\psi_f,y)$ simply 
by $\mathcal{G}(\xi_f)$.
To have the functional equations of the $L$-functions of a Dirichlet character $\chi$ and the corresponding
Hecke character $\xi$ to {\it look the same} we need the Gauss sums to be defined as above; compare Neukirch \cite[Chapter VII, Theorem 2.8]{neukirch} with Weil \cite[Chapter VII, Theorem 5]{weil}.

In our paper with Shahidi \cite{raghuram-shahidi-imrn} we defined the Gauss sum $\gamma(\xi_f)$ of a 
Hecke character $\xi$ as $\mathcal{G}(\xi_f^{-1})$. 
Since this article crucially uses the results of \cite{raghuram-shahidi-imrn} it is helpful to record 
the following details that we will repeatedly use: 
Lemma 4.3 of \cite{raghuram-shahidi-imrn} now reads as 
\begin{equation}
\label{eqn:variation1}
\sigma(\xi_f(t_{\sigma})) = \sigma(\mathcal{G}(\xi_f))/\mathcal{G}(\xi_f^{\sigma}), 
\end{equation}
and Theorem 4.1(1) of \cite{raghuram-shahidi-imrn} now reads as
\begin{equation}
\label{eqn:variation2}
\sigma\left(
\frac{p^{\epsilon \cdot \epsilon_{\xi}}(\Pi_f\otimes\xi_f)}
{\mathcal{G}(\xi_f)^{n(n-1)/2}\,p^{\epsilon}(\Pi_f)} \right) 
=
\left(\frac{p^{\epsilon\cdot\epsilon_{\xi^{\sigma}}}(\Pi_f^{\sigma}\otimes\xi_f^{\sigma})}
{\mathcal{G}(\xi_f^{\sigma})^{n(n-1)/2}\,p^{\epsilon}(\Pi_f^{\sigma})} \right),
\end{equation}
where $\Pi$ is a regular algebraic cuspidal automorphic representation of ${\rm GL}_n({\mathbb A})$ and 
$\xi$ is an algebraic Hecke character of ${\mathbb Q}$.

\section{Rankin-Selberg $L$-functions for ${\rm GL}_n \times {\rm GL}_{n-1}$}
\label{sec:rankin-selberg}

\subsection{The global integral}
\label{sec:global-integral}

\subsubsection{}
We consider the Rankin--Selberg zeta integrals for ${\rm GL}_n \times {\rm GL}_{n-1}$. 
(See the works of Jacquet, Piatetski-Shapiro and Shalika \cite{jacquet-shalika-ajm}, 
\cite{jacquet-ps-shalika-ajm83}. We roughly follow the notation in Cogdell's expository article 
\cite{cogdell-notes}.)
Let $\Pi$ (resp., $\Sigma$) be a cuspidal automorphic representation of $G_n({\mathbb A})$
(resp., $G_{n-1}({\mathbb A})$). Let $\phi \in V_{\Pi}$ and $\phi' \in V_{\Sigma}$ be cusp forms. 
The zeta integral we are interested in is given by
$$
I(s, \phi, \phi') = \int_{G_{n-1}({\mathbb Q})\backslash G_{n-1}({\mathbb A})}
\phi(\iota(g))\phi'(g)|{\rm det}(g)|^{s - 1/2}\, dg.
$$
Since the cusp forms $\phi$ and $\phi'$ are rapidly decreasing, the above integral converges for all 
$s \in {\mathbb C}$. Suppose that $w \in W(\Pi,\psi)$ and $w' \in W(\Sigma, \psi^{-1})$ are global Whittaker functions corresponding to $\phi$ and $\phi'$, respectively; recall that $\psi$ is a 
nontrivial additive character ${\mathbb Q}\backslash {\mathbb A}$. After the usual unfolding, one has 
$$
I(s,\phi,\phi') = \Psi(s,w,w') := \int_{N_{n-1}({\mathbb A})\backslash G_{n-1}({\mathbb A})}
w(\iota(g))w'(g)|{\rm det}(g)|^{s - 1/2}\, dg.
$$
The integral $\Psi(s,w,w')$ converges for ${\rm Re}(s) \gg 0$. 
Let $w = \otimes w_v$ and $w' = \otimes w'_v$, then $\Psi(s,w,w') := \otimes \Psi_v(s,w_v,w'_v)$ for 
${\rm Re}(s) \gg 0$, where the local 
integral $\Psi_v$ is given by a similar formula. Recall that the local integral $\Psi_v(s,w_v,w'_v)$ converges 
for ${\rm Re}(s) \gg 0$ and has a meromorphic continuation to all of ${\mathbb C}$; see 
\cite[Proposition 6.2]{cogdell-notes} for $v < \infty$, and for $v = \infty$ see 
\cite[Theorem 1.2(i)]{cogdell-ps}.
We will choose the local Whittaker functions carefully so that
the integral $I(1/2,\phi,\phi')$ computes the special value $L_f(1/2, \Pi\times\Sigma)$ up to quantities which are under control, in the sense that they will be ${\rm Aut}({\mathbb C})$-equivariant. Before making this choice of vectors, we review some ingredients.

\subsubsection{Action of ${\rm Aut}({\mathbb C})$ on Whittaker models}
Consider the cyclotomic character
$$
\begin{array}{llllllc}
{\rm Aut}({\mathbb C}/{\mathbb Q}) & \to & 
{\rm Gal}(\overline{\mathbb Q}/{\mathbb Q}) & \to&
{\rm Gal}({\mathbb Q}(\mu_{\infty})/{\mathbb Q}) & \to &
\widehat{{\mathbb Z}}^{\times} \simeq \prod_p {\mathbb Z}_p^{\times} \\
\sigma & \mapsto & \sigma |_{\overline{\mathbb Q}} & \mapsto & 
\sigma |_{{\mathbb Q}(\mu_{\infty})} & \mapsto & t_{\sigma} 
\end{array}
$$
The element $t_{\sigma}$ at the end can
be thought of as an element of ${\mathbb A}_f^{\times} = {\mathbb I}_f$. 
Let $t_{\sigma,n}$ denote the diagonal matrix 
${\rm diag}(t_{\sigma}^{-(n-1)}, t_{\sigma}^{-(n-2)},\dots,1)$ regarded as an 
element of  ${\rm GL}_n({\mathbb A}_f)$. 
For $\sigma \in {\rm Aut}({\mathbb C})$ and 
$w \in W(\Pi_f,\psi_f)$, define the function ${}^{\sigma}\!w$ by
$$
{}^{\sigma}\!w(g_f) = \sigma(w(t_{\sigma,n}g_f))
$$
for all $g_f \in {\rm GL}_n({\mathbb A}_f)$. Note that this action makes sense locally, by replacing
$t_{\sigma}$ by $t_{\sigma, v}$. Further, if $\Pi_v$ is unramified, then the spherical vector is 
mapped to the spherical vector under $\sigma$. This makes the local and global actions compatible. 
For more details, see \cite[\S 3.2]{raghuram-shahidi-imrn}. (See also \S\ref{sec:different-rational-structures} where we discuss other possible actions of ${\rm Aut}({\mathbb C})$ on Whittaker models.)

\subsubsection{Normalized new vectors}
\label{sec:newvectors}
We review some details about local new (or essential) vectors \cite{jacquet-ps-shalika}.
Just for this paragraph, let $F$ be a non-archimedean local field, $\mathcal{O}_F$ the 
ring of integers of $F$, and $\mathcal{P}_F$ the maximal ideal of $\mathcal{O}_F$. 
Let $(\pi,V)$ be an irreducible admissible generic representation of ${\rm GL}_n(F)$. Let $K_n(m)$ be the 
`mirahoric subgroup' of ${\rm GL}_n(\mathcal{O}_F)$ consisting of all matrices whose last row 
is congruent to $(0,\dots,0,*)$ modulo $\mathcal{P}_F^m$.  
Let $V_m := \{v \in V \ | \ \pi(k)v = 
\omega_{\pi}(k_{n,n})v, \forall k \in K_n(m) \}$. Let $\mathfrak{f}(\pi)$ be the least non-negative integer $m$ for which 
$V_m \neq (0)$. One knows that $\mathfrak{f}(\pi)$ is the conductor of $\pi$ (in the sense of epsilon factors), and that 
$V_{\mathfrak{f}(\pi)}$ is one-dimensional. Any vector in $V_{\mathfrak{f}(\pi)}$ is called a {\it new vector} of 
$\pi$. 
Fix a nontrivial additive character $\psi$ of $F$, and assume that $V = W(\pi,\psi)$ is the Whittaker model for $\pi$.
If $\pi$ is unramified, i.e., $\mathfrak{f}(\pi) = 0$, 
then we fix a specific new vector called the {\it spherical vector}, which we denote $w_{\pi}^{\rm sp}$, 
normalized such that $w_{\pi}^{\rm sp}(1_n) = 1.$ More generally, for 
any $\pi$, amongst all new vectors, there is a distinguished vector, called the {\it essential vector}, which we denote as $w_{\pi}^{\rm ess}$, characterized by the property that for any irreducible unramified generic representation $\rho$ of ${\rm GL}_{n-1}(F)$ one has 
$$
\Psi(s, w_{\pi}^{\rm ess}, w_{\rho}^{\rm sp}) = 
\int_{N_{n-1}(F)\backslash G_{n-1}(F)}
w_{\pi}^{\rm ess}(\iota(g))w_{\rho}^{\rm sp}(g)|{\rm det}(g)|^{s - 1/2}\, dg = L(s, \pi \times \rho).
$$
We note that if $\pi$ is unramified then $w_{\pi}^{\rm ess} = w_{\pi}^{\rm sp}$. 
Although the essential vector has the above nice analytic property, it does not, in general, have good arithmetic properties in the sense that essential vectors are not ${\rm Aut}({\mathbb C})$-equivariant. For this equivariance, following Mahnkopf, using 
\cite[Lemma 1.3.2]{mahnkopf-jussieu}, we fix the following normalization. This lemma says that given $\pi$ 
there exists $t_{\pi} \in T_n(F)$ such that a new vector for $\pi$ is nonvanishing on $t_{\pi}$. Note that 
necessarily $t_{\pi} \in T_n^+(F)$, i.e., if $t_{\pi} = {\rm diag}(t_1,t_2,\dots,t_n)$ then $t_it_{i+1}^{-1} \in \mathcal{O}_F$ for all $1 \leq i \leq n-1$. 
We let $w_{\pi}^0$ be the new vector normalized such that 
$w_{\pi}^0(t_{\pi}) = 1$. If $\pi$ is unramified then we may and will take $t_{\pi} = 1_n$, and so
$w_{\pi}^0 = w_{\pi}^{\rm ess} = w_{\pi}^{\rm sp}$. For any $\sigma \in {\rm Aut}({\mathbb C})$ we may and will take 
$t_{\pi^{\sigma}} = t_{\pi}$. Then it is easy to see that ${}^{\sigma}\!w_{\pi}^0 = w_{\pi^{\sigma}}^0$. We define the
scalar $c_{\pi} \in {\mathbb C}^*$ by $w_{\pi}^0 = c_{\pi}w_{\pi}^{\rm ess}$, i.e., 
$c_{\pi} = w_{\pi}^{\rm ess}(t_{\pi})^{-1}.$

\subsubsection{Choice of Whittaker vectors and cusp forms}
\label{subsec:whittaker}
We now go back to global notation and choose global Whittaker vectors $w_{\Pi} = \otimes_v w_{\Pi,v} \in W(\Pi,\psi)$ and $w_{\Sigma} = \otimes_v w_{\Sigma,v} \in W(\Sigma, \psi^{-1})$ as follows. 
Let $S_{\Sigma}$ be the set of finite places $v$ where $\Sigma_v$ is unramified.  
\begin{enumerate}
\item If $v \notin S_{\Sigma}\cup \{\infty \}$,  we let $w_{\Pi, v} = w_{\Pi_v}^0$, and 
$w_{\Sigma, v} = w_{\Sigma_v}^{\rm sp}$. 
\item If $v \in S_{\Sigma}$, we let $w_{\Sigma, v} = w_{\Sigma_v}^0$, and let $w_{\Pi, v}$ be the unique Whittaker 
function whose restriction to $G_{n-1}({\mathbb Q}_v)$ is supported on 
$N_{n-1}({\mathbb Q}_v)t_{\Sigma_v}K_{n-1}(\mathfrak{f}(\Sigma_v))$, and on this double coset it is given by 
$w_{\Pi, v}(ut_{\Sigma_v}k) = \psi(u)\omega_{\Sigma_v}^{-1}(k_{n-1,n-1})$, 
for all $u \in N_{n-1}({\mathbb Q}_v)$ and for all $k \in K_{n-1}(\mathfrak{f}(\Sigma_v))$. 
\item If $v = \infty$, we let $w_{\Pi,\infty}$ and $w_{\Sigma, \infty}$ be arbitrary nonzero vectors. 
(Later, these will be cohomological vectors.) 
\end{enumerate}
Let $w_{\Pi_f} = \otimes_{v \neq \infty} w_{\Pi,v}$ and $w_{\Pi} = w_{\Pi_{\infty}} \otimes w_{\Pi_f}$. 
Similarly, let $w_{\Sigma_f} = \otimes_{v \neq \infty} w_{\Sigma,v}$ and 
$w_{\Sigma} = w_{\Sigma_{\infty}} \otimes w_{\Sigma_f}$. Let $\phi_{\Pi}$ 
(resp., $\phi_{\Sigma}$) be the cusp form corresponding to $w_{\Pi}$ (resp., $w_{\Sigma}$).

\subsubsection{Rankin--Selberg $L$-functions}

\begin{prop}
\label{prop:rankin-selberg}
We have 
{\small
$$
I(1/2, \phi_{\Pi}, \phi_{\Sigma}) \ = \  
\frac{\Psi_{\infty}(1/2, w_{\Pi_{\infty}}, w_{\Sigma_{\infty}})\, {\rm vol}(\Sigma)\, 
\prod_{v \notin S_{\Sigma} \cup \{\infty\}} c_{\Pi_v}}
{\prod_{v \in S_{\Sigma}} L(1/2, \Pi_v \times \Sigma_v)} 
L_f(1/2, \Pi \times \Sigma),
$$}
where ${\rm vol}(\Sigma) = \prod_{v \in S_{\Sigma}} {\rm vol}(K_{n-1}(\mathfrak{f}(\Sigma_v)) \in {\mathbb Q}^*$.
\end{prop}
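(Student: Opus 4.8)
The plan is to compute the global zeta integral by unfolding it into an Euler product, evaluate each local factor using the Whittaker vectors chosen in \S\ref{subsec:whittaker}, and then reassemble. Recall that for ${\rm Re}(s)\gg 0$ one has $I(s,\phi_{\Pi},\phi_{\Sigma}) = \Psi(s,w_{\Pi},w_{\Sigma}) = \Psi_{\infty}(s,w_{\Pi_{\infty}},w_{\Sigma_{\infty}})\prod_{v\neq\infty}\Psi_v(s,w_{\Pi,v},w_{\Sigma,v})$, and that both $I(s,\phi_{\Pi},\phi_{\Sigma})$ (because $\phi_{\Pi}$ and $\phi_{\Sigma}$ are rapidly decreasing) and each local factor $\Psi_v$ admit meromorphic continuation to all of ${\mathbb C}$. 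So it suffices to compute the local factors and then specialize to $s=1/2$.

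At a finite place $v\notin S_{\Sigma}$, where $\Sigma_v$ is unramified and we chose $w_{\Sigma,v}=w_{\Sigma_v}^{\rm sp}$ and $w_{\Pi,v}=w_{\Pi_v}^{0}=c_{\Pi_v}\,w_{\Pi_v}^{\rm ess}$, the defining property of the essential vector recalled in \S\ref{sec:newvectors}, applied to the unramified generic representation $\Sigma_v$ of ${\rm GL}_{n-1}({\mathbb Q}_v)$, gives
\[
\Psi_v(s,w_{\Pi,v},w_{\Sigma,v}) \;=\; c_{\Pi_v}\,\Psi_v(s,w_{\Pi_v}^{\rm ess},w_{\Sigma_v}^{\rm sp}) \;=\; c_{\Pi_v}\,L(s,\Pi_v\times\Sigma_v);
\]
this also covers the places where both $\Pi_v$ and $\Sigma_v$ are unramified, where $c_{\Pi_v}=1$. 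In particular $\Psi_v(1/2,w_{\Pi,v},w_{\Sigma,v})=c_{\Pi_v}\,L(1/2,\Pi_v\times\Sigma_v)$.

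At $v\in S_{\Sigma}$, where we chose $w_{\Sigma,v}=w_{\Sigma_v}^{0}$ and took $w_{\Pi,v}$ to be the Whittaker function whose restriction to $G_{n-1}({\mathbb Q}_v)$ (via $\iota$) is supported on $N_{n-1}({\mathbb Q}_v)\,t_{\Sigma_v}\,K_{n-1}(\mathfrak{f}(\Sigma_v))$ with $w_{\Pi,v}(\iota(u\,t_{\Sigma_v}\,k))=\psi(u)\,\omega_{\Sigma_v}^{-1}(k_{n-1,n-1})$, I would compute $\Psi_v$ directly. Since $\iota^{*}w_{\Pi,v}$ is supported on that single double coset, the integral over $N_{n-1}({\mathbb Q}_v)\backslash G_{n-1}({\mathbb Q}_v)$ collapses to one over $N_{n-1}({\mathcal O}_v)\backslash K_{n-1}(\mathfrak{f}(\Sigma_v))$ with representatives $g=t_{\Sigma_v}k$, using that conjugation by the torus element $t_{\Sigma_v}$ preserves $N_{n-1}({\mathbb Q}_v)$ and that $N_{n-1}({\mathbb Q}_v)\cap K_{n-1}(\mathfrak{f}(\Sigma_v))=N_{n-1}({\mathcal O}_v)$. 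On such $g$ one has $w_{\Pi,v}(\iota(t_{\Sigma_v}k))=\omega_{\Sigma_v}^{-1}(k_{n-1,n-1})$ by definition, while the mirahoric equivariance of $w_{\Sigma_v}^{0}$ together with $w_{\Sigma_v}^{0}(t_{\Sigma_v})=1$ gives $w_{\Sigma_v}^{0}(t_{\Sigma_v}k)=\omega_{\Sigma_v}(k_{n-1,n-1})$; the two characters cancel, and $|{\rm det}(t_{\Sigma_v}k)|^{\,s-1/2}=|{\rm det}\,t_{\Sigma_v}|^{\,s-1/2}$ is $1$ at $s=1/2$. Hence, once the Haar measures on $N_{n-1}$ and $G_{n-1}$ over ${\mathbb Q}_v$ are normalized compatibly (with ${\rm vol}(G_{n-1}({\mathcal O}_v))=1$), one obtains $\Psi_v(1/2,w_{\Pi,v},w_{\Sigma,v})={\rm vol}(K_{n-1}(\mathfrak{f}(\Sigma_v)))$; this integral in fact converges absolutely for every $s$.

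It remains to assemble. For ${\rm Re}(s)\gg 0$, combining the above with $L_f(s,\Pi\times\Sigma)=\prod_{v\neq\infty}L(s,\Pi_v\times\Sigma_v)$ gives
\[
I(s,\phi_{\Pi},\phi_{\Sigma}) \;=\; \Psi_{\infty}(s,w_{\Pi_{\infty}},w_{\Sigma_{\infty}})\;\Bigl(\prod_{v\in S_{\Sigma}}\Psi_v(s,w_{\Pi,v},w_{\Sigma,v})\Bigr)\Bigl(\prod_{v\notin S_{\Sigma}\cup\{\infty\}}c_{\Pi_v}\Bigr)\;\frac{L_f(s,\Pi\times\Sigma)}{\prod_{v\in S_{\Sigma}}L(s,\Pi_v\times\Sigma_v)},
\]
where $\prod_{v\notin S_{\Sigma}\cup\{\infty\}}c_{\Pi_v}$ is a finite product because $c_{\Pi_v}=1$ whenever $\Pi_v$ is unramified. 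Since $\Pi$ and $\Sigma$ are cuspidal on groups of different ranks, $L_f(s,\Pi\times\Sigma)$ is entire (Jacquet--Piatetski-Shapiro--Shalika); together with the meromorphicity of the finitely many remaining factors and the entireness of $I(s,\phi_{\Pi},\phi_{\Sigma})$, the displayed identity persists by analytic continuation and may be specialized to $s=1/2$, where every quantity on the right is finite. Plugging in $\prod_{v\in S_{\Sigma}}\Psi_v(1/2,w_{\Pi,v},w_{\Sigma,v})=\prod_{v\in S_{\Sigma}}{\rm vol}(K_{n-1}(\mathfrak{f}(\Sigma_v)))={\rm vol}(\Sigma)$ yields the proposition. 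The two steps where genuine care is needed are this passage from ${\rm Re}(s)\gg 0$ to $s=1/2$ --- which relies on holomorphy of the completed $L$-function at $1/2$, so that no spurious pole survives on the right --- and the local computation at $v\in S_{\Sigma}$, where one must match Haar-measure normalizations across the decomposition of $N_{n-1}({\mathbb Q}_v)\backslash N_{n-1}({\mathbb Q}_v)t_{\Sigma_v}K_{n-1}(\mathfrak{f}(\Sigma_v))$ and check that the ``mirahoric'' character carried by $w_{\Pi,v}$ is exactly inverse to the one carried by $w_{\Sigma_v}^{0}$; everything else is the standard unramified Rankin--Selberg calculation repackaged through the essential-vector identity.
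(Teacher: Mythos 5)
Your proposal is correct and follows essentially the same route as the paper: unfold, factor into local integrals, evaluate the unramified-for-$\Sigma$ places via the essential-vector identity and the remaining places by the explicit support computation, then pass from ${\rm Re}(s)\gg 0$ to $s=1/2$ by meromorphic continuation. You simply spell out the local computation at the places where $\Sigma_v$ is ramified (and correctly read $S_{\Sigma}$ as the set of \emph{ramified} places for $\Sigma$, fixing an evident typo in the paper), whereas the paper compresses this into ``because of our specific choice of Whittaker vectors.''
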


\begin{proof}
\begin{eqnarray*}
I(s, \phi_{\Pi}, \phi_{\Sigma}) & = & 
\int_{G_{n-1}({\mathbb Q})\backslash G_{n-1}({\mathbb A})}
\phi_{\Pi}(\iota(g))\phi_{\Sigma}(g)|{\rm det}(g)|^{s - 1/2}\, dg, \ \ (\forall s \in {\mathbb C})\\
& = & 
\int_{N_{n-1}({\mathbb A})\backslash G_{n-1}({\mathbb A})}
w_{\Pi}(\iota(g))w_{\Sigma}(g)|{\rm det}(g)|^{s - 1/2}\, dg, \ \ ({\rm Re}(s) \gg 0) \\
& = & 
\prod_v \int_{N_{n-1}({\mathbb Q}_v)\backslash G_{n-1}({\mathbb Q}_v)}
w_{\Pi_v}(\iota(g_v))w_{\Sigma_v}(g_v)|{\rm det}(g_v)|^{s - 1/2}\, dg_v \\
& = & 
\Psi_{\infty}(s, w_{\Pi_{\infty}}, w_{\Sigma_{\infty}})
\prod_{v \notin S_{\Sigma} \cup \{\infty\}} c_{\Pi_v}L(s, \Pi_v \times \Sigma_v)
\prod_{v \in S_{\Sigma}} {\rm vol}(K_{n-1}(\mathfrak{f}(\Sigma_v)). 
\end{eqnarray*}
The last equality is because of our specific choice of Whittaker vectors. 
Multiplying and dividing by the local factors for $v \in S_{\Sigma}$ we get
$$ 
I(s, \phi_{\Pi}, \phi_{\Sigma}) = 
\frac{\Psi_{\infty}(s, w_{\Pi_{\infty}}, w_{\Sigma_{\infty}})\, {\rm vol}(\Sigma)\, 
\prod_{v \notin S_{\Sigma} \cup \{\infty\} } c_{\Pi_v}}
{\prod_{v \in S_{\Sigma}} L(s, \Pi_v \times \Sigma_v)} 
L_f(s, \Pi \times \Sigma), \ \ ({\rm Re}(s) \gg 0).
$$
The left hand side is defined for all $s$, and the right hand side has a meromorphic continuation to all of
${\mathbb C}$. Hence we get equality at $s = 1/2$. Since $c_{\Pi_v} = 1$ if $\Pi_v$ is unramified, the 
product $\prod_{v \notin S_{\Sigma} \cup \{\infty\} } c_{\Pi_v}$ is really a finite product. 
\end{proof}

\subsection{Cohomological interpretation of the integral}

We interpret the Rankin--Selberg integral $I(1/2, \phi_{\Pi}, \phi_{\Sigma})$ 
in terms of Poincar\'e duality. More precisely, the vector $w_{\Pi_f}$ will correspond to a cohomology class $\vartheta_{\Pi}$
in degree $b_n$ (the bottom degree of the cuspidal range for $G_n$) on a locally symmetric space tentatively denoted $F_n$ for ${\rm GL}_n$, and similarly $w_{\Sigma_f}$ will correspond to a class $\vartheta_{\Sigma}$ in degree 
$b_{n-1}$ on $F_{n-1}$. 
These classes, after dividing by certain periods, have good rationality properties. 
We pull back $\vartheta_{\Pi}$ along 
the proper map $\iota: F_{n-1} \to F_n$, and wedge (or cup) with $\vartheta_{\Sigma}$, to give a top degree class on 
$F_{n-1}$. It is of top degree 
because $b_n + b_{n-1} = {\rm dim}(F_{n-1})$; this numerical coincidence is at the heart of other works
on Rankin--Selberg $L$-functions. (See, for example, 
Kazhdan, Mazur and Schmidt \cite[\S 1]{kazhdan-mazur-schmidt} or Mahnkopf \cite[p. 616]{mahnkopf-jussieu}.)
Integrating this form on $F_{n-1}$, which indeed is the Rankin--Selberg integral of the previous section, is nothing but applying a linear functional to cohomology in top degree, and the point is that this functional is that obtained from pairing with a certain cycle (constructed as in Mahnkopf \cite[5.1.1]{mahnkopf-jussieu}, which in turn is a generalization of Harder's construction  \cite{harder} for ${\rm GL}_2$). 
Interpreting the integral, and hence the special values of $L$-functions, as a cohomological pairing permits us to study arithmetic properties of the special values, since this pairing is Galois equivariant. We now make all this precise.

\subsubsection{The periods}
\label{subsec:periods}
We assume that the reader is familiar with our paper with Shahidi \cite{raghuram-shahidi-imrn}, and especially the definition of the periods attached to regular algebraic cuspidal representations. We review the very 
basic ingredients here, and refer the reader to \cite{raghuram-shahidi-imrn} for all finer details. 
See especially \cite[Definition/Proposition 3.3]{raghuram-shahidi-imrn}. We also use the same notation as in 
that paper, with just one exception that we mention in the next paragraph.  

Assume now that the cuspidal representation $\Pi$ (resp., $\Sigma$) is regular and algebraic. A consequence is that 
there is a weight $\mu \in X^+(T_n)$ (resp., $\lambda \in X^+(T_{n-1})$) 
such that $\Pi \in {\rm Coh}(G_n,\mu^{\vee})$ (resp., $\Sigma \in {\rm Coh}(G_{n-1},\lambda^{\vee})$).
The weight $\mu$ is a dominant integral weight which is {\it pure} (by \cite[Lemme de puret\'e 4.9]{clozel}), 
i.e., if $\mu = (\mu_1,\dots,\mu_n)$, then there is an integer ${\rm wt}(\mu)$ such that 
$\mu_i + \mu_{n-i+1} = {\rm wt}(\mu)$. We will denote by $X^+_0(T_n)$ the set of dominant integral pure weights
for $T_n$. Similarly, $\lambda \in X_0^+(T_{n-1})$.
Let $\epsilon \in \{\pm \} \simeq (K_{n,\infty}/K_{n, \infty}^0){}^{\widehat{}}$ be a sign, which can be 
arbitrary if $n$ is even, and is uniquely determined by $\Pi$ if $n$ is odd. (If $n$ is odd then 
$\epsilon = \omega_{\Pi_{\infty}}(-1)(-1)^{{\rm wt}(\mu)/2}$, which is the central character of 
$\Pi_{\infty}\otimes M_{\mu}^{\vee}$ at $-1$.) Such an $\epsilon$ is called a 
permissible sign for $\Pi$. We define $b_n = n^2/4$ if $n$ is even, and $b_n = (n^2-1)/4$ if 
$n$ is odd. We have a map 
$$
\mathcal{F}_{\Pi_f,\epsilon,[\Pi_{\infty}]} : W(\Pi_f) \to H^{b_n}(\mathfrak{g}_{\infty}, K_{\infty}^0; 
V_{\Pi}\otimes M_{\mu}^{\vee})(\epsilon).
$$
We note that difference in notation mentioned above: a choice of generator for the one-dimensional ${\mathbb C}$-vector space $H^{b_n}(\mathfrak{g}_{\infty}, K_{\infty}^0; \Pi_{\infty}\otimes M_{\mu}^{\vee})(\epsilon)$ which was denoted 
${\bf w}_{\infty}$ in \cite{raghuram-shahidi-imrn}, will be denoted by $[\Pi_{\infty}]$ in this paper. 
The map $\mathcal{F}_{\Pi_f,\epsilon,[\Pi_{\infty}]}$ is a $G_n({\mathbb A}_f)$-equivariant map between 
irreducible modules, both of which have ${\mathbb Q}(\Pi)$-rational structures that are unique up to homotheties. 
For the action of 
${\rm Aut}({\mathbb C})$ and the rational structure on the Whittaker model $W(\Pi_f)$ 
see \cite[\S 3.2]{raghuram-shahidi-imrn}, and on $H^{b_n}(\mathfrak{g}_{\infty}, K_{\infty}^0; 
V_{\Pi}\otimes M_{\mu}^{\vee})(\epsilon)$ see \cite[\S 3.3]{raghuram-shahidi-imrn}.
The period $p^{\epsilon}(\Pi)$ is defined by requiring the normalized map 
$$
\mathcal{F}_{\Pi_f,\epsilon,[\Pi_{\infty}]}^0 := p^{\epsilon}(\Pi)^{-1}\mathcal{F}_{\Pi_f,\epsilon,[\Pi_{\infty}]}
$$
to be ${\rm Aut}({\mathbb C})$-equivariant, i.e., for all $\sigma \in {\rm Aut}({\mathbb C})$ one has
$$
\sigma \circ \mathcal{F}_{\Pi_f,\epsilon,[\Pi_{\infty}]}^0 = 
\mathcal{F}_{\Pi^{\sigma}_f,\epsilon,[\Pi^{\sigma}_{\infty}]}^0 \circ \sigma.
$$

\subsubsection{The cohomology classes}
\label{subsec:classes}
We now define the classes attached to the global Whittaker vectors $w_{\Pi_f}$ and $w_{\Sigma_f}$: 
\begin{equation}
\label{eqn:class-1}
\vartheta_{\Pi,\epsilon} := \mathcal{F}_{\Pi_f,\epsilon,[\Pi_{\infty}]}(w_{\Pi_f}), \ \ \ 
\vartheta^0_{\Pi,\epsilon} := \mathcal{F}_{\Pi_f,\epsilon,[\Pi_{\infty}]}^0(w_{\Pi_f}) 
= p^{\epsilon}(\Pi)^{-1}\vartheta_{\Pi,\epsilon},
\end{equation}
and similarly, 
\begin{equation}
\label{eqn:class-2}
\vartheta_{\Sigma,\eta} := \mathcal{F}_{\Sigma_f,\eta,[\Sigma_{\infty}]}(w_{\Sigma_f}), \ \ \ 
\vartheta^0_{\Sigma,\eta} := \mathcal{F}_{\Sigma_f,\eta,[\Sigma_{\infty}]}^0(w_{\Sigma_f}) 
= p^{\eta}(\Sigma)^{-1}\vartheta_{\Sigma,\eta}.
\end{equation}

Let $K_f$ be an open compact subgroup of $G_n({\mathbb A}_f)$
which fixes $w_{\Pi_f}$ and such that $\iota^*K_f$ fixes
$w_{\Sigma_f}$. Note that $\vartheta_{\Pi,\epsilon}$, which, by definition, lies in $H^{b_n}(\mathfrak{g}_{\infty}, K_{\infty}^0; V_{\Pi}\otimes M_{\mu}^{\vee})(\epsilon)$, actually lies in 
$H^{b_n}(\mathfrak{g}_{\infty}, K_{\infty}^0; V_{\Pi}^{K_f}\otimes M_{\mu}^{\vee})(\epsilon)$, and by the same token,
$\vartheta_{\Sigma,\eta} \in H^{b_{n-1}}(\mathfrak{g}_{\infty}, K_{\infty}^0; V_{\Sigma}^{\iota^*K_f}\otimes 
M_{\lambda}^{\vee})(\eta)$. Consider the manifolds:
\begin{eqnarray*}
S_n(K_f) & := & G_n({\mathbb Q})\backslash G_n({\mathbb A})/K_{n,\infty}^0K_f, \\
S_{n-1}(\iota^*K_f) & := & G_{n-1}({\mathbb Q})\backslash G_{n-1}({\mathbb A})/K_{n-1,\infty}^0\iota^*K_f
\end{eqnarray*}
Via certain standard isomorphisms (\cite[\S 3.3]{raghuram-shahidi-imrn}) we may identify the class $\vartheta_{\Pi,\epsilon}$ as a class 
in $H^{b_n}_{\rm cusp}(S_n(K_f), \mathcal{M}_{\mu}^{\vee})(\tilde{\Pi}_f)$ where $\tilde{\Pi_f} := \Pi_f \otimes \epsilon$ is a representation of $G_n({\mathbb A}_f) \otimes \pi_0(K_{n,\infty})$. 
Similarly, $\vartheta_{\Sigma,\eta} \in 
H^{b_{n-1}}_{\rm cusp}(S_{n-1}(\iota^*K_f), \mathcal{M}_{\lambda}^{\vee})(\tilde{\Sigma}_f)$. 

We recall that
cuspidal cohomology injects into cohomology with compact supports, i.e., $H^*_{\rm cusp} \hookrightarrow H^*_c$. 
(See \cite[p.129]{clozel}.) Hence $\vartheta_{\Pi,\epsilon}$ is a class in 
$H^{b_n}_c(S_n(K_f),\mathcal{M}_{\mu}^{\vee}),$ 
and similarly, $\vartheta_{\Sigma,\eta}$ lies  in $
H^{b_{n-1}}_c(S_{n-1}(\iota^*K_f),\mathcal{M}_{\lambda}^{\vee})$. (In this context, it is also helpful to bear in mind that cuspidal cohomology in fact injects into interior cohomology $H^*_! := {\rm Image}(H^*_c \to H^*)$. This is useful 
especially when dealing with rational structures; see \cite[Proof of Th\'eor\`eme 3.19]{clozel} or our paper \cite[\S 3.3]{raghuram-shahidi-imrn}.)

We remind the reader that the map $\iota: S_{n-1}(\iota^*K_f) \to S_n(K_f)$ is a proper map. Consider the pull back 
$\iota^*\vartheta_{\Pi,\epsilon}$ of $\vartheta_{\Pi,\epsilon}$ via $\iota$, which gives us a class in 
$H^{b_n}_c(S_{n-1}(\iota^*K_f), \iota^*\mathcal{M}_{\mu}^{\vee})$, where $\iota^*\mathcal{M}_{\mu}^{\vee}$ is the sheaf on $S_{n-1}(\iota^*K_f)$ attached to the restriction to $G_{n-1}$ of the 
representation $M_{\mu}^{\vee}$. We now define a certain pairing $\langle \vartheta_{\Sigma,\eta}, \iota^*\vartheta_{\Pi,\epsilon} \rangle_{\mathcal{C}(\iota^*K_f)}$, toward which we recall the construction of a cycle $\mathcal{C}(\iota^*K_f)$.

\subsubsection{The Harder-Mahnkopf cycle}
\label{sec:harder-mahnkopf-cycle}
We first explain the general principle of the construction. Let $M$ be a smooth connected orientable manifold of dimension $d$, $\overline{M}$ a compactification of $M$, and $\partial\overline{M}$ the boundary of $\overline{M}$.
Suppose that $M = \overline{M} - \partial\overline{M} = {\rm int}(\overline{M})$, and that
$M$ and $\overline{M}$ have the same homotopy type. 
(We should keep in mind the Borel-Serre compactification of a locally symmetric space.)
We have the following isomorphisms based on Poincar\'e duality: 
\begin{eqnarray*}
{\rm Hom}(H^d_c(M,{\mathbb Z}), {\mathbb Z}) & \simeq & {\rm Hom}(H_0(M,{\mathbb Z}), {\mathbb Z})
\simeq H^0(\overline{M},{\mathbb Z}) \\
& \simeq & H_d(\overline{M},\partial\overline{M}, {\mathbb Z}) \simeq {\mathbb Z} =: \langle [\vartheta_M] \rangle.
\end{eqnarray*}
To talk about $H^d_c(M,{\mathbb Z})$ we have transported the ${\mathbb Z}$-structure on singular cohomology
via the de Rham isomorphism. The fundamental class $[\vartheta_M]$ is well-defined up to a sign, and by the above isomorphisms, induces a 
functional $H^d_c(M,{\mathbb Z}) \to {\mathbb Z}$ which is nothing but integrating a compactly supported differential
form of degree $d$ over the entire manifold $M$ (with the chosen orientation, i.e., the choice of $[\vartheta_M]$). If the manifold $M$ is disconnected, but
has finitely many connected components, then in certain situations including the one we are interested in, it makes
sense to choose the fundamental classes for each connected component in a {\it consistent} manner. 

We digress a little to note that the above construction has good rationality properties. 
We recall (\cite[\S3]{raghuram-shahidi-imrn}) that by definition of the action of $\sigma \in {\rm Aut}({\mathbb C})$ on de Rham cohomology, as well as on cohomology with compact supports, one has 
$$
\sigma\left(\int_M \omega \right) = \int_M \omega^{\sigma}
$$
for any $\omega \in H^d_c(M,{\mathbb C})$, which may be re-written as $
\sigma \left(\langle [\vartheta_M], \omega \rangle \right) = 
\langle [\vartheta_M], \omega^{\sigma} \rangle $.

We now briefly review the Harder-Mahnkopf cycle, while referring the reader to \cite[5.1.1]{mahnkopf-jussieu} for 
all finer details. Recall that $K_{n,\infty}^1 := {\rm SO}(n) < G_{n,\infty}^0 = G_n({\mathbb R})^0$. For any open compact subgroup $K_f$ of $G_n({\mathbb A}_f)$ consider the manifold
$$
F_n(K_f) = G_n({\mathbb Q})\backslash G_n({\mathbb A})/K_{n,\infty}^1K_f.
$$
We let $d_n = n(n+1)/2 = {\rm dim}(F_n(K_f))$.
The connected components $F_{n,x}(K_f)$ of the manifold $F_n(K_f)$ are parametrized by $x \in {\mathbb Q}^*\backslash {\mathbb A}^{\times}/ {\mathbb R}_{>0}{\rm det}(K_f)$; indeed, for any such $x$, let 
$g_x \in G_n({\mathbb A}_f)$ be such that ${\rm det}(g_x)=x$, then 
$F_{n,x}(K_f)$ is identified with 
$\Gamma_x \backslash G_n({\mathbb R})^0 / K_{n,\infty}^1$ for the discrete subgroup 
$\Gamma_x = G_n({\mathbb Q}) \cap g_x K_f g_x^{-1}$ . 
These notations also apply to $G_{n-1}$ with any open compact subgroup $R_f$ of $G_{n-1}({\mathbb A}_f)$.
Choose an orientation on $X_{n-1} := G_{n-1}({\mathbb R})^0 / K_{n-1,\infty}^1$. Via the canonical map 
$X_{n-1} \to \Gamma_x \backslash X_{n-1} = F_{n-1,x}(K_f)$ we get a fundamental class 
$[\vartheta_{x,R_f}]$ on $F_{n-1,x}(R_f)$, i.e., $ [\vartheta_{x,R_f}]
\in H_{d_{n-1}}(\overline{F}_{n-1,x}(R_f), \partial\overline{F}_{n-1,x}(R_f),{\mathbb Z})$. 

At this point, it is convenient to work with ${\mathbb Q}$-coefficients. (Indeed, ultimately, 
it suffices to work with the ring obtained by inverting a finite set of primes determined by the 
primes where $\Pi$ and $\Sigma$ are ramified.) Now define 
$$
\mathcal{C}(R_f) = 
\frac{1}{{\rm vol}(R_f)}
\sum_{x \in {\mathbb Q}^*\backslash {\mathbb A}^{\times}/ {\mathbb R}_{>0}{\rm det}(R_f)}\ 
[\vartheta_{x,R_f}]
$$
which is the required cycle in 
$H_{d_{n-1}}(\overline{F}_{n-1}(R_f), \partial\overline{F}_{n-1}(R_f),{\mathbb Q})$.

Recall that $\pi_0(G_n)$ (resp., $\pi_0(K_{n,\infty})$) 
is the group of connected components of $G_n({\mathbb R})$ (resp., ${\rm O}(n)Z_n({\mathbb R})$).
We identify $\pi_0(K_{n,\infty}) \simeq \pi_0(G_n) \simeq {\mathbb Z}/2$. The nontrivial element may be taken to be represented by $\delta_n = {\rm diag}(-1,1,\dots,1)$. Right translations by $\delta_n$ on $G_n({\mathbb A})$, 
denoted $r_{\delta_n}$, induces an action of $\pi_0$ on $F_n(K_f)$, and by functoriality induces an action, denoted
$r_{\delta_n}^*$, on its (co-)homology groups. 
Applying these considerations to $G_{n-1}$, we get an action of $\pi_0(G_{n-1})$ on the cycle $\mathcal{C}(R_f)$
which is described in the following

\begin{lemma}
\label{lem:stable}
For any open compact subgroup $R_f$ of $G_{n-1}({\mathbb A}_f)$, the action of $\delta_{n-1}$ on the  
cycle $\mathcal{C}(R_f)$ is given by: 
$$
r_{\delta_{n-1}}^*\mathcal{C}(R_f) = (-1)^n\mathcal{C}(R_f).
$$
\end{lemma}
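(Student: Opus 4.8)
The statement to prove is that $r_{\delta_{n-1}}^*\mathcal{C}(R_f) = (-1)^n \mathcal{C}(R_f)$, where $\mathcal{C}(R_f)$ is the Harder--Mahnkopf cycle on $\overline{F}_{n-1}(R_f)$, built as a normalized sum over connected components of fundamental classes $[\vartheta_{x,R_f}]$ transported from a fixed orientation on $X_{n-1} = G_{n-1}(\mathbb{R})^0/K_{n-1,\infty}^1$. The sign $(-1)^n$ has two separate origins, and the plan is to isolate them: (i) right translation by $\delta_{n-1}$ permutes the connected components $F_{n-1,x}(R_f)$ according to how it acts on $\mathbb{Q}^*\backslash \mathbb{A}^\times / \mathbb{R}_{>0}\det(R_f)$ via $x \mapsto \det(\delta_{n-1})x = -x$; and (ii) on each component, $\delta_{n-1}$ acts as an orientation-reversing or orientation-preserving diffeomorphism of $X_{n-1}$ depending on the parity of $\dim X_{n-1} = d_{n-1} = (n-1)n/2$, or rather on whether the induced map on the top exterior power of the tangent space at the basepoint is $+1$ or $-1$.

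First I would set up the bookkeeping for the component-permutation. Write $\mathcal{C}(R_f) = \frac{1}{{\rm vol}(R_f)}\sum_x [\vartheta_{x,R_f}]$. Right translation by $\delta_{n-1}$ sends the component indexed by $x$ to the component indexed by $-x$ (since $\det(\delta_{n-1}) = -1$), so as a set-map on the index set $\mathbb{Q}^*\backslash\mathbb{A}^\times/\mathbb{R}_{>0}\det(R_f)$ it is multiplication by $-1$, which is a bijection of the (finite) index set. Hence the sum is preserved as an \emph{unordered} sum of classes; what must be tracked is the sign with which each $[\vartheta_{-x,R_f}]$ appears in $r_{\delta_{n-1}}^*[\vartheta_{x,R_f}]$. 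For this I would use the description $F_{n-1,x}(R_f) \simeq \Gamma_x\backslash G_{n-1}(\mathbb{R})^0/K_{n-1,\infty}^1$ with $\Gamma_x = G_{n-1}(\mathbb{Q})\cap g_x R_f g_x^{-1}$, choose $g_{-x} = \delta_{n-1}^{-1} g_x$ (so that $\det(g_{-x}) = -x$ and $\Gamma_{-x} = \delta_{n-1}^{-1}\Gamma_x \delta_{n-1}$), and then $r_{\delta_{n-1}}$ is realized concretely as a diffeomorphism $F_{n-1,x}(R_f) \to F_{n-1,-x}(R_f)$ covered by $\delta_{n-1}^{-1}$-conjugation, or rather by the map induced by right translation $g \mapsto g\delta_{n-1}$ on $G_{n-1}(\mathbb{R})^0$ — except that $\delta_{n-1}\notin G_{n-1}(\mathbb{R})^0$, so one instead writes $g\delta_{n-1} = (\delta_{n-1} g' \delta_{n-1})$ with $g' = \delta_{n-1}^{-1}g\delta_{n-1}$ and tracks the effect through the identification. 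The fundamental class $[\vartheta_{x,R_f}]$ is by construction the pushforward of the fixed orientation class on $X_{n-1}$; so $r_{\delta_{n-1}}^*[\vartheta_{x,R_f}]$ equals $\pm[\vartheta_{-x,R_f}]$ where the sign is exactly whether the self-map of $X_{n-1}$ induced by $g \mapsto \delta_{n-1}^{-1}g\delta_{n-1}$ (equivalently left-and-right translation by $\delta_{n-1}$) preserves or reverses the chosen orientation.

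Next I would compute that orientation sign at the basepoint. The tangent space of $X_{n-1}$ at the identity coset is the $(-1)$-eigenspace $\mathfrak{p}_{n-1}$ of the Cartan involution on $\mathfrak{gl}_{n-1}(\mathbb{R})$ modulo the center — concretely symmetric matrices, with the scalars split off as appropriate. Conjugation by $\delta_{n-1} = {\rm diag}(-1,1,\dots,1)$ acts on $\mathfrak{gl}_{n-1}(\mathbb{R})$ by negating precisely the entries in the first row and first column off the diagonal (the $(1,j)$ and $(j,1)$ entries for $j\geq 2$) and fixing everything else. On the space of symmetric matrices $\mathfrak{p}_{n-1}$ this negates $n-2$ coordinate directions (the $(1,j)=(j,1)$ symmetric pairs, $j = 2,\dots,n-1$), and fixes the rest; so the determinant of $\mathrm{Ad}(\delta_{n-1})$ on $\mathfrak{p}_{n-1}$ is $(-1)^{n-2} = (-1)^n$. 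I would double-check the effect on the central/scalar direction and confirm it contributes $+1$ (conjugation fixes scalars). This gives orientation sign $(-1)^n$ on each component, independent of $x$, and combined with the (sign-free) permutation of components and the $\mathrm{vol}(R_f)$ normalization being $\delta_{n-1}$-invariant, yields $r_{\delta_{n-1}}^*\mathcal{C}(R_f) = (-1)^n\mathcal{C}(R_f)$.

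The main obstacle will be the careful reconciliation of left versus right translation and the precise identification of $r_{\delta_{n-1}}$ as a map between different connected components — one must be scrupulous that the "fundamental class transported via the fixed orientation on $X_{n-1}$" is defined consistently across components (this is the \emph{consistent choice} alluded to in \S\ref{sec:harder-mahnkopf-cycle}), so that the only sign discrepancy is the Jacobian of $\mathrm{Ad}(\delta_{n-1})$ on $\mathfrak{p}_{n-1}$ and not some additional bookkeeping artifact from the coset identifications $\Gamma_{-x} = \delta_{n-1}^{-1}\Gamma_x\delta_{n-1}$. Once the diagram relating $F_{n-1,x}$, $F_{n-1,-x}$, $X_{n-1}$ and the two translation actions is set up correctly, the sign computation itself is the short linear-algebra fact $\det(\mathrm{Ad}(\delta_{n-1})|_{\mathfrak{p}_{n-1}}) = (-1)^n$, and the proof concludes quickly. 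I would also note in passing that this lemma is exactly what guarantees the cycle pairs nontrivially against classes of a fixed sign $\eta$, which is why it is recorded here.
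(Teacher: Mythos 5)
Your argument is correct, and it is worth noting that the paper itself does not prove this lemma: it simply refers to Lemma~5.1.3 and the table in 5.2.2 of Mahnkopf, adding only the remark that this generalizes the fact that $\delta_2$ reverses orientation on the upper half plane. So you have supplied the content that the paper outsources, and your route is the expected one. The two ingredients are exactly right: (i) right translation by $\delta_{n-1}$ at the infinite place permutes the components $x\mapsto -x$ of the index set $\mathbb{Q}^*\backslash\mathbb{A}^\times/\mathbb{R}_{>0}\det(R_f)$, and after moving the rational matrix $\delta_{n-1}$ across to the left (choosing $g_{-x}=\delta_{n-1}g_x$, $\Gamma_{-x}=\delta_{n-1}\Gamma_x\delta_{n-1}$) the induced map on the common cover $X_{n-1}$ is conjugation by $\delta_{n-1}$; (ii) since the chosen orientation is $G_{n-1}(\mathbb{R})^0$-invariant, the sign is $\det\bigl(\mathrm{Ad}(\delta_{n-1})|_{\mathfrak{p}_{n-1}}\bigr)$ computed at the basepoint, and on symmetric $(n-1)\times(n-1)$ matrices conjugation by $\mathrm{diag}(-1,1,\dots,1)$ negates precisely the $n-2$ basis vectors $E_{1j}+E_{j1}$, $j\geq 2$, giving $(-1)^{n-2}=(-1)^n$. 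Your sanity check matches the paper's parenthetical: for $n-1=2$ one gets $(-1)^3=-1$, i.e., $z\mapsto-\bar z$ is orientation-reversing. The only point to be scrupulous about, which you already flag, is that the fundamental classes of all components are transported from the single fixed orientation on $X_{n-1}$, so no extra sign enters from the coset identifications.
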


\begin{proof}
See Lemma 5.1.3 and the table in 5.2.2 of Mahnkopf \cite{mahnkopf-jussieu}. (This is a generalization of the 
fact that $\delta_2$ switches the (orientations on the) upper and lower half planes.)
\end{proof}

\subsubsection{The pairing 
$\langle \vartheta_{\Sigma,\eta}, \vartheta_{\Pi,\epsilon} \rangle_{\mathcal{C}(R_f)}$}
Now assume that $K_f$ is an open compact subgroup of $G_n({\mathbb A}_f)$, which for convenience may be taken to 
be a principal congruence subgroup of $G_n(\widehat{\mathbb Z})$. We let $R_f := \iota^*K_f$ which is an 
open compact subgroup of $G_{n-1}(\widehat{\mathbb Z})$. 
The map $\iota$ induces a proper map 
$\iota : F_{n-1}(R_f) \to S_n(K_f)$, which in turn induces a mapping 
$$
\iota^* : H^{\bullet}_c(S_n(K_f),\mathcal{M}_{\mu}^{\vee}) \to 
H^{\bullet}_c(F_{n-1}(R_f), \iota^*\mathcal{M}_{\mu}^{\vee}).
$$
Also, the canonical map $p : F_{n-1}(R_f) \to S_{n-1}(R_f)$ induces a mapping
$$
p^* : H^{\bullet}_c(S_{n-1}(R_f),\mathcal{M}_{\lambda}^{\vee}) \to 
H^{\bullet}_c(F_{n-1}(R_f), \mathcal{M}_{\lambda}^{\vee}).
$$ 
We invoke the hypothesis $\mu^{\vee} \succ \lambda$ which implies that $M_{\lambda}$ 
appears in $M_{\mu}^{\vee}|_{G_{n-1}} = \iota^*M_{\mu}^{\vee}$, and, in fact, it appears with multiplicity one. 
We fix a $G_{n-1}$-equivariant pairing, defined over ${\mathbb Q}$, and unique up to ${\mathbb Q}^*$, 
which we write as  
\begin{equation}
\label{eqn:pairing-M-mu-M-lambda}
\langle \cdot , \cdot \rangle : M_{\lambda}^{\vee} \times \iota^*M_{\mu}^{\vee} \to {\mathbb Q}
\end{equation}
and denote the corresponding morphism of sheaves as 
$
\langle \cdot , \cdot \rangle : \mathcal{M}_{\lambda}^{\vee} \otimes \iota^*\mathcal{M}_{\mu}^{\vee} \to 
\underline{\mathbb Q}.
$ 
Cup product together with the above pairing gives a map:
$$
\langle \cdot , \cdot \rangle \circ \cup : H^{b_{n-1}}_c(F_{n-1}(R_f), \mathcal{M}_{\lambda}^{\vee})
\times H^{b_n}_c(F_{n-1}(R_f), \iota^*\mathcal{M}_{\mu}^{\vee}) \to 
H^{d_{n-1}}_c(F_{n-1}(R_f), \underline{\mathbb Q}).
$$
This makes sense since $b_{n-1} + b_n = d_{n-1}$. We will abbreviate this map simply by $\cup$.
We digress for a moment to remind the reader that cupping cohomology classes, which makes sense in the context of singular cohomology, is the same as wedging cohomology classes, which makes sense in the context of de Rham cohomology. 
(See Griffiths-Harris \cite{griffiths-harris}.) To control rationality properties, it is best to think of the cup product, but to actually compute the pairing--as we will do later--it is best to think in terms of the wedge product.
This also permits us to write 
$$
p^*\vartheta_{\Sigma,\eta} \cup \iota^*\vartheta_{\Pi,\epsilon} = 
p^*\vartheta_{\Sigma,\eta} \wedge \iota^*\vartheta_{\Pi,\epsilon}.
$$
We now define the required pairing as 
\begin{equation}
\label{eqn:pairing}
\langle \vartheta_{\Sigma,\eta}, \vartheta_{\Pi,\epsilon} \rangle_{\mathcal{C}(R_f)} := 
\langle \mathcal{C}(R_f) , p^*\vartheta_{\Sigma,\eta} \cup \iota^*\vartheta_{\Pi,\epsilon} \rangle = 
\int_{\mathcal{C}(R_f)} p^*\vartheta_{\Sigma,\eta} \wedge \iota^*\vartheta_{\Pi,\epsilon}
\end{equation}
where the second equality is given by Poincar\'e duality as described in \ref{sec:harder-mahnkopf-cycle}. 
(See also \cite[Diagram (5.3)]{mahnkopf-jussieu}.)

\subsubsection{The pairing at infinity and a nonvanishing hypothesis}
\label{subsec:nonvanishing}

We recall again that the class $\vartheta_{\Pi,\epsilon}$ is the image of a  
certain global finite Whittaker vector $w_{\Pi_f}$ under the map 
$\mathcal{F}_{\Pi_f,\epsilon,[\Pi_{\infty}]}$. (All these comments also apply to $\vartheta_{\Sigma,\eta}$.)  
We recall \cite[\S 3.3]{raghuram-shahidi-imrn} that this map is the composition of
the three isomorphisms: 
\begin{eqnarray*}
W(\Pi_f) & \longrightarrow & 
W(\Pi_f) \otimes 
H^{b_n}(\mathfrak{g}_{n,\infty},K_{n,\infty}^0; W(\Pi_{\infty}) \otimes 
M_{\mu}^{\vee})(\epsilon) \\
& \longrightarrow & 
H^{b_n}(\mathfrak{g}_{n,\infty},K_{n,\infty}^0; W(\Pi) \otimes 
M_{\mu}^{\vee})(\epsilon) \\
& \longrightarrow & 
H^{b_n}(\mathfrak{g}_{n,\infty},K_{n,\infty}^0; V_{\Pi} \otimes M_{\mu}^{\vee})(\epsilon),
\end{eqnarray*}
where the first map is $w_f \mapsto w_f \otimes [\Pi_{\infty}]$; 
the second map is the obvious one; and the third map is the map induced in cohomology 
by the inverse of the map which gives the Fourier coefficient of a cusp form in $V_{\Pi}$--the space
of functions in $\mathcal{A}_{{\rm cusp}}(G({\mathbb Q}) \backslash G({\mathbb A}))$ which 
realizes $\Pi$. In particular, in computing the pairing 
$\langle \vartheta_{\Sigma,\eta}, \iota^*\vartheta_{\Pi,\epsilon} \rangle_{\mathcal{C}(R_f)}$, we will
be computing a pairing at infinity, and a pairing with the finite vectors $w_{\Pi_f}$ and $w_{\Sigma_f}$. 
The latter is indeed the Rankin--Selberg integral (at $s=1/2$) appearing in the left hand side of 
Proposition~\ref{prop:rankin-selberg}. We now discuss the pairing at infinity.

To compute the pairing at infinity, we follow the argument in \cite[\S5.1.4]{mahnkopf-jussieu}.
Fix a basis $\{{\bf x}_i\}$ for $(\mathfrak{g}_{n,\infty}/\mathfrak{k}_{n,\infty})^*$, and a basis 
$\{{\bf y}_j\}$ for $(\mathfrak{g}_{n-1,\infty}/\mathfrak{k}_{n-1,\infty})^*$, such that 
$\iota^*{\bf x}_j = {\bf y}_j$ for all 
$1 \leq j \leq {\rm dim}(\mathfrak{g}_{n-1,\infty}/\mathfrak{k}_{n-1,\infty})^* = {\rm dim}(X_{n-1}) = d_{n-1}$, and 
$\iota^*{\bf x}_i = 0$ if $i > d_{n-1}$. We further note that 
${\bf y}_1\wedge {\bf y}_2 \wedge \cdots \wedge {\bf y}_{d_{n-1}}$ corresponds to 
a $G_{n-1}({\mathbb R})^0$-invariant measure on $X_{n-1}$. 
Let $\{m_{\alpha}\}$ (resp., $\{m_{\beta}\}$) be a ${\mathbb Q}$-basis for $M_{\mu}^{\vee}$ (resp., 
$M_{\lambda}^{\vee}$), and recall that we have a pairing $\langle \cdot , \cdot \rangle$ between these 
modules as in (\ref{eqn:pairing-M-mu-M-lambda}). Now the class 
$[\Pi_{\infty}]$ is represented by a $K_{n,\infty}^0$-invariant element in 
$\wedge^{b_n}(\mathfrak{g}_{n,\infty}/\mathfrak{k}_{n,\infty})^* \otimes W(\Pi_{\infty}) \otimes M_{\mu}^{\vee}$
which we write as 
\begin{equation}
\label{eqn:pi-infty}
[\Pi_{\infty}] = 
\sum_{{\bf i} = i_1 < \cdots < i_{b_n}}
\sum_{\alpha} {\bf x}_{\bf i} \otimes w_{\infty,\bf{i},\alpha}  \otimes m_{\alpha},
\end{equation}
where $w_{\infty,\bf{i},\alpha} \in W(\Pi_{\infty},\psi_{\infty})$, and 
similarly, $[\Sigma_{\infty}]$ is represented by a $K_{n-1,\infty}^0$-invariant element in 
$\wedge^{b_{n-1}}(\mathfrak{g}_{n-1,\infty}/\mathfrak{k}_{n-1,\infty})^* \otimes W(\Sigma_{\infty}) \otimes 
M_{\lambda}^{\vee}$ which we write as:
\begin{equation}
\label{eqn:sigma-infty}
[\Sigma_{\infty}] = 
\sum_{{\bf j} = j_1 < \cdots < j_{b_{n-1}}}
\sum_{\beta} {\bf y}_{\bf j} \otimes w_{\infty,\bf{j},\beta}  \otimes m_{\beta},
\end{equation}
with $w_{\infty,\bf{j},\beta} \in W(\Sigma_{\infty},\psi^{-1}_{\infty})$.
We now define a pairing at infinity by
\begin{equation}
\label{eqn:pairing-infinity}
\langle [\Pi_{\infty}], [\Sigma_{\infty}] \rangle = 
\sum_{{\bf i}, {\bf j}} s({\bf i}, {\bf j}) 
\sum_{\alpha, \beta} 
\langle m_{\beta} , m_{\alpha} \rangle
\Psi_{\infty}(1/2, w_{\infty,{\bf i},\alpha}, w_{\infty,{\bf j},\beta})
\end{equation}
where $s({\bf i},{\bf j}) \in \{0,-1,1\}$ is defined by 
$\iota^*{\bf x}_{\bf i} \wedge {\bf y}_{\bf j} = s({\bf i},{\bf j}) {\bf y}_1 \wedge {\bf y}_2 \wedge \cdots 
\wedge {\bf y}_{d_{n-1}}$. Recall that 
$\Psi_{\infty}(1/2, w_{\infty,{\bf i},\alpha}, w_{\infty,{\bf j},\beta})$ is defined only after 
meromorphic continuation; see Cogdell--Piatetskii-Shapiro \cite[Theorem 1.2]{cogdell-ps}. Note that the assumption
`$s = 1/2$ is critical' ensures that the integrals $\Psi_{\infty}(1/2, w_{\infty,{\bf i},\alpha}, w_{\infty,{\bf j},\beta})$ are all finite, hence $\langle [\Pi_{\infty}], [\Sigma_{\infty}] \rangle$ is finite. 
We now make the following nonvanishing hypothesis about this pairing at infinity: 
\begin{hypo}
\label{hypo:nonvanishing}
$
\langle [\Pi_{\infty}], [\Sigma_{\infty}] \rangle \neq 0.
$
\end{hypo}

This nonvanishing hypothesis is currently a limitation of this technique. 
It has shown up in 
several other works based on the same, or at any rate similar, techniques. 
See for instance Ash-Ginzburg \cite{ash-ginzburg}, 
Harris \cite{harris},
Kasten-Schmidt \cite{kasten-schmidt}, 
Kazhdan-Mazur-Schmidt \cite{kazhdan-mazur-schmidt},
Mahnkopf \cite{mahnkopf-jussieu},   
and Schmidt \cite{schmidt}. It is widely hoped that this assumption is valid; for example, 
Mahnkopf \cite[\S6]{mahnkopf-jussieu}
proves a necessary condition for this nonvanishing assumption,  
Schmidt \cite{schmidt} proved it for $n=3$ in the case of trivial coefficients ($\mu=0$ and $\lambda=0$), 
and Kasten-Schmidt \cite[\S4]{kasten-schmidt} have recently proved it for $n=3$ for nontrivial coefficients.
{\it It is an important technical problem to be able to prove this nonvanishing 
hypothesis.} 

For the rest of this paper we assume that Hypothesis~\ref{hypo:nonvanishing} is valid. Observe that 
the quantity $\langle [\Pi_{\infty}], [\Sigma_{\infty}] \rangle $ depends only on the weights $\mu$ and 
$\lambda$, since the weight $\mu$ determines the infinitesimal character of $M_{\mu}^{\vee}$ which 
in turn determines $\Pi_{\infty}$ (\cite[\S5.1]{raghuram-shahidi-imrn}), and similarly, since  
$\lambda$ determines $\Sigma_{\infty}$. We now define,
what may loosely be called as the period at infinity, a nonzero complex number $p_{\infty}(\mu,\lambda)$ 
given by:
\begin{equation}
\label{eqn:p-infinity}
p_{\infty}(\mu,\lambda) := \frac{1}{\langle [\Pi_{\infty}], [\Sigma_{\infty}] \rangle}.
\end{equation}
Ultimately, if one is able to explicitly compute everything, then one should expect $p_{\infty}(\mu,\lambda)$
to be a power of $(2\pi i)$.

\subsubsection{The main identity for the central critical value of Rankin--Selberg $L$-functions}

\begin{thm}[Main Identity]
\label{thm:main-identity}
Let $\Pi$ be a regular algebraic cuspidal automorphic representation of ${\rm GL}_n({\mathbb A}_{\mathbb Q})$, and 
let $\Sigma$  be a regular algebraic cuspidal automorphic representation of 
${\rm GL}_{n-1}({\mathbb A}_{\mathbb Q})$.  Let $\mu \in X^+_0(T_n)$ be such that $\Pi \in {\rm Coh}(G_n,\mu^{\vee})$, and let $\lambda \in X^+_0(T_{n-1})$ be such that $\Sigma \in {\rm Coh}(G_{n-1},\lambda^{\vee})$. Assume that 
$\mu^{\vee}  \succ \lambda$, and that $s=1/2$ is critical for 
$L_f(s, \Pi \times \Sigma)$. We attach a canonical pair of 
signs $\epsilon, \eta \in \{\pm\}$ to the pair $(\Pi,\Sigma)$ as follows: 
\begin{enumerate}
\item $\epsilon = (-1)^n\eta$. 
\item 
   \begin{itemize}
     \item If $n$ is odd then let $\epsilon = \omega_{\Pi_{\infty}}(-1)(-1)^{{\rm wt}(\mu)/2}$; 
     \item if $n$ is even then let $\eta = \omega_{\Sigma_{\infty}}(-1)(-1)^{{\rm wt}(\lambda)/2}.$ 
   \end{itemize}
\end{enumerate} 
Let $w_{\Pi_f}$ and $w_{\Sigma_f}$ be the Whittaker vectors defined in \ref{subsec:whittaker}.
We let $K_f$ be any open compact subgroup of $G_n({\mathbb A}_f)$ which fixes $w_{\Pi_f}$ and such that
$R_f := \iota^*K_f$ fixes $w_{\Sigma_f}$. We let $\vartheta^0_{\Pi,\epsilon}$ and 
$\vartheta^0_{\Sigma,\eta}$ be the normalized classes defined in (\ref{eqn:class-1}) and (\ref{eqn:class-2}).
There exists nonzero complex numbers $p^{\epsilon}(\Pi)$ and 
$p^{\eta}(\Sigma)$ as in \ref{subsec:periods}, and assuming the validity of 
Hypothesis~\ref{hypo:nonvanishing} there is 
a nonzero complex number $p_{\infty}(\mu,\lambda)$ as in \ref{subsec:nonvanishing} such that
$$
\frac{L_f(1/2, \Pi \times \Sigma)}
{p^{\epsilon}(\Pi)\, p^{\eta}(\Sigma)\, p_{\infty}(\mu,\lambda)} \ = \  
\frac{\prod_{v \in S_{\Sigma}} L(1/2, \Pi_v \times \Sigma_v)}
{{\rm vol}(\Sigma)\, \prod_{v \notin S_{\Sigma} \cup \{\infty \} } c_{\Pi_v}}\, 
\langle \vartheta^0_{\Sigma,\eta}, \vartheta^0_{\Pi,\epsilon}\rangle_{\mathcal{C}(R_f)},
$$
where the pairing on the right hand side is defined in (\ref{eqn:pairing}), the nonzero rational number  
${\rm vol}(\Sigma)$ is as in Proposition~\ref{prop:rankin-selberg}, 
and $c_{\Pi_v}$ is defined in \ref{sec:newvectors}.
\end{thm}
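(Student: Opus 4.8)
The plan is to connect the two expressions for the Rankin--Selberg integral $I(1/2,\phi_\Pi,\phi_\Sigma)$: the analytic one coming from Proposition~\ref{prop:rankin-selberg}, and a cohomological one coming from the pairing $\langle\vartheta_{\Sigma,\eta},\vartheta_{\Pi,\epsilon}\rangle_{\mathcal{C}(R_f)}$. First I would spell out, via the description of $\mathcal{F}_{\Pi_f,\epsilon,[\Pi_\infty]}$ as a composition of three isomorphisms recalled in \S\ref{subsec:nonvanishing}, exactly what differential form on $F_{n-1}(R_f)$ represents $p^*\vartheta_{\Sigma,\eta}\wedge\iota^*\vartheta_{\Pi,\epsilon}$. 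Using the expansions \eqref{eqn:pi-infty} and \eqref{eqn:sigma-infty} for $[\Pi_\infty]$ and $[\Sigma_\infty]$, together with the finite Whittaker vectors $w_{\Pi_f}$ and $w_{\Sigma_f}$, the wedge of the two forms is a sum over multi-indices of $w_{\Pi_f}\otimes w_{\infty,\mathbf{i},\alpha}$ against $w_{\Sigma_f}\otimes w_{\infty,\mathbf{j},\beta}$, tensored with $\langle m_\beta,m_\alpha\rangle$ times the wedge $\iota^*\mathbf{x}_{\mathbf{i}}\wedge\mathbf{y}_{\mathbf{j}}=s(\mathbf{i},\mathbf{j})\,\mathbf{y}_1\wedge\cdots\wedge\mathbf{y}_{d_{n-1}}$.

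Next I would integrate this top-degree form over the cycle $\mathcal{C}(R_f)$. Here the key point is that the Harder--Mahnkopf cycle $\mathcal{C}(R_f)$, by its construction in \S\ref{sec:harder-mahnkopf-cycle} as $\tfrac{1}{\mathrm{vol}(R_f)}\sum_x [\vartheta_{x,R_f}]$, is exactly the cycle whose associated functional on $H^{d_{n-1}}_c(F_{n-1}(R_f),\underline{\mathbb{Q}})$ implements integration of the corresponding automorphic differential form over $G_{n-1}(\mathbb{Q})\backslash G_{n-1}(\mathbb{A})$ against the measure attached to $\mathbf{y}_1\wedge\cdots\wedge\mathbf{y}_{d_{n-1}}$ — this is the generalization of Harder's ${\rm GL}_2$ construction, and the normalization by $\mathrm{vol}(R_f)$ is precisely what makes the passage between the adelic quotient and the level-$R_f$ manifold work. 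Pairing the form against $\mathcal{C}(R_f)$ therefore produces $\sum_{\mathbf{i},\mathbf{j}}s(\mathbf{i},\mathbf{j})\sum_{\alpha,\beta}\langle m_\beta,m_\alpha\rangle\, I(1/2,\phi_{\Pi,\mathbf{i},\alpha},\phi_{\Sigma,\mathbf{j},\beta})$, where each $\phi$ is the cusp form built from $w_{\Pi_f}$ (resp.\ $w_{\Sigma_f}$) and the archimedean Whittaker vector occurring in that term. Applying Proposition~\ref{prop:rankin-selberg} to each such summand factors out the common quantity $\dfrac{\mathrm{vol}(\Sigma)\prod_{v\notin S_\Sigma\cup\{\infty\}}c_{\Pi_v}}{\prod_{v\in S_\Sigma}L(1/2,\Pi_v\times\Sigma_v)}L_f(1/2,\Pi\times\Sigma)$, leaving behind exactly $\sum_{\mathbf{i},\mathbf{j}}s(\mathbf{i},\mathbf{j})\sum_{\alpha,\beta}\langle m_\beta,m_\alpha\rangle\,\Psi_\infty(1/2,w_{\infty,\mathbf{i},\alpha},w_{\infty,\mathbf{j},\beta})$, which is by definition $\langle[\Pi_\infty],[\Sigma_\infty]\rangle = p_\infty(\mu,\lambda)^{-1}$.

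Thus $\langle\vartheta_{\Sigma,\eta},\vartheta_{\Pi,\epsilon}\rangle_{\mathcal{C}(R_f)} = \dfrac{\mathrm{vol}(\Sigma)\prod_{v\notin S_\Sigma\cup\{\infty\}}c_{\Pi_v}}{\prod_{v\in S_\Sigma}L(1/2,\Pi_v\times\Sigma_v)}\cdot\dfrac{L_f(1/2,\Pi\times\Sigma)}{p_\infty(\mu,\lambda)}$, and dividing through by $p^\epsilon(\Pi)p^\eta(\Sigma)$ to pass to the normalized classes $\vartheta^0$ gives the asserted identity. Along the way I would have to justify that the signs $\epsilon,\eta$ chosen in the statement are the only ones for which all the objects are nonzero: the constraint $\epsilon=(-1)^n\eta$ comes from Lemma~\ref{lem:stable} (the cycle $\mathcal{C}(R_f)$ transforms by $(-1)^n$ under $r^*_{\delta_{n-1}}$, so the pairing vanishes unless the $\pi_0$-isotypic components of the two classes are compatible), while for odd $n$ (resp.\ even $n$) the sign $\epsilon$ (resp.\ $\eta$) is forced as the central character value of $\Pi_\infty\otimes M_\mu^\vee$ at $-1$ (resp.\ $\Sigma_\infty\otimes M_\lambda^\vee$), which is recalled in \S\ref{subsec:periods}. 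The main obstacle I expect is the bookkeeping in the first two steps: keeping track of how the three-step unravelling of $\mathcal{F}_{\Pi_f,\epsilon,[\Pi_\infty]}$ interacts with the pullback $\iota^*$, the wedge product, and the Poincar\'e-duality identification of $\mathcal{C}(R_f)$ with an integration functional — in particular checking that no spurious constants (orientation signs, measure normalizations, the $1/\mathrm{vol}(R_f)$ factor) are dropped, so that the right-hand side of Proposition~\ref{prop:rankin-selberg} reappears \emph{exactly}. The analytic input (convergence, meromorphic continuation of $\Psi_\infty$ at $s=1/2$, finiteness guaranteed by criticality) is already in place from the cited work of Cogdell--Piatetskii-Shapiro, so that part is routine.
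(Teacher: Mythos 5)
Your proposal is correct and follows essentially the same route as the paper's proof: expand the classes $\vartheta_{\Pi,\epsilon}$ and $\vartheta_{\Sigma,\eta}$ via the multi-index representatives of $[\Pi_\infty]$ and $[\Sigma_\infty]$, integrate the wedge over $\mathcal{C}(R_f)$ (with the $1/{\rm vol}(R_f)$ normalization absorbing the passage from the level-$R_f$ manifold to the ad\`elic quotient), recognize each inner integral as $I(1/2,\phi_{{\bf i},\alpha},\phi_{{\bf j},\beta})$, apply Proposition~\ref{prop:rankin-selberg}, and recover $\langle[\Pi_\infty],[\Sigma_\infty]\rangle$; the sign constraint via Lemma~\ref{lem:stable} is also handled exactly as in the paper.
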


\begin{proof}
By definition of the normalization of the cohomology classes, it suffices to prove 
$$
\frac{L_f(1/2, \Pi \times \Sigma)}
{p_{\infty}(\mu,\lambda)} \ = \  
\frac{\prod_{v \in S_{\Sigma}} L(1/2, \Pi_v \times \Sigma_v)}
{{\rm vol}(\Sigma)\, \prod_{v \notin S_{\Sigma} \cup \{\infty \}} c_{\Pi_v}}\, 
\langle \vartheta_{\Sigma,\eta}, \vartheta_{\Pi,\epsilon}\rangle_{\mathcal{C}(R_f)}.
$$
By definition of the pairing at infinity, it suffices then to verify 
\begin{equation}
\label{eqn:verify-pairing}
\langle \vartheta_{\Sigma,\eta}, \vartheta_{\Pi,\epsilon} \rangle_{\mathcal{C}(R_f)} = 
\frac{{\rm vol}(\Sigma)\, \prod_{v \notin S_{\Sigma} \cup \{\infty \}} c_{\Pi_v}}
{\prod_{v \in S_{\Sigma}} L(1/2, \Pi_v \times \Sigma_v)}\,
\langle [\Pi_{\infty}], [\Sigma_{\infty}] \rangle \, L_f(1/2, \Pi \times \Sigma).
\end{equation}

The class $\vartheta_{\Pi,\epsilon} \in H^{b_n}(\mathfrak{g}_{n,\infty}, K_{n,\infty}^0; V_{\Pi}\otimes M_{\mu}^{\vee})(\epsilon)$, as in \ref{subsec:nonvanishing}, is represented by a $K_{n,\infty}^0$-invariant element in 
$\wedge^{b_n}(\mathfrak{g}_{n,\infty}/\mathfrak{k}_{n,\infty})^* \otimes V_{\Pi} \otimes M_{\mu}^{\vee}$ which 
we write as
$$
\vartheta_{\Pi,\epsilon} = 
\sum_{{\bf i} = i_1 < \cdots < i_{b_n}}
\sum_{\alpha} {\bf x}_{\bf i} \otimes \phi_{\bf{i},\alpha}  \otimes m_{\alpha}. 
$$
Similarly, we write $\vartheta_{\Sigma, \eta}$ as 
$$
\vartheta_{\Sigma, \eta} = 
\sum_{{\bf j} = j_1 < \cdots < j_{b_{n-1}}}
\sum_{\beta} {\bf y}_{\bf j} \otimes \phi_{\bf{j},\beta}  \otimes m_{\beta}, 
$$
representing a $K_{n-1,\infty}^0$-invariant element in 
$\wedge^{b_{n-1}}(\mathfrak{g}_{n-1,\infty}/\mathfrak{k}_{n-1,\infty})^* \otimes V_{\Sigma} \otimes 
M_{\lambda}^{\vee}$. 
Let $w_{\bf{i},\alpha}$ be the Whittaker vector in $W(\Pi,\psi)$ corresponding to $\phi_{\bf{i},\alpha}$, and 
similarly, $w_{\bf{j},\beta}$ be the Whittaker vector in $W(\Sigma,\psi^{-1})$ corresponding to $\phi_{\bf{j},\beta}$.
Unravelling the definitions, we have the decompositions
$$
w_{\bf{i},\alpha} = w_{\infty,\bf{i},\alpha} \otimes w_{\Pi_f}, \ \ {\rm and}\ \ 
w_{\bf{j},\beta} = w_{\infty,\bf{j},\beta} \otimes w_{\Sigma_f},
$$
where the vectors at infinity are exactly as in (\ref{eqn:pi-infty}) and (\ref{eqn:sigma-infty}). 

To verify 
(\ref{eqn:verify-pairing}) we begin with the definition of the pairing
$$
\langle \vartheta_{\Sigma,\eta}, \vartheta_{\Pi,\epsilon} \rangle_{\mathcal{C}(R_f)} = 
\int_{\mathcal{C}(R_f)} p^*\vartheta_{\Sigma,\eta} \wedge \iota^*\vartheta_{\Pi,\epsilon},  
$$
and observe that the integral on the right hand side is stable under the action of $\pi_0$ exactly 
when $\epsilon\eta = (-1)^n$; this may be seen by using Lemma~\ref{lem:stable} 
(just as in \cite[5.2.2]{mahnkopf-jussieu}).
Next, we note that 
the right hand side may be written as
$$
\frac{1}{{\rm vol}(R_f)}
\sum_{{\bf i},{\bf j},\alpha,\beta} s({\bf i},{\bf j})\langle m_{\beta},m_{\alpha}\rangle
\int_{ G_{n-1}({\mathbb Q}) \backslash G_{n-1}({\mathbb A}) / K_{n-1,\infty}^1R_f }
\phi_{{\bf i},\alpha}(\iota(g))\phi_{{\bf j},\beta}(g)\, dg.
$$
By the choice of the measure $dg$, this simplifies to 
$$
\sum_{{\bf i},{\bf j},\alpha,\beta} s({\bf i},{\bf j})\langle m_{\beta},m_{\alpha}\rangle
\int_{ G_{n-1}({\mathbb Q}) \backslash G_{n-1}({\mathbb A})}
\phi_{{\bf i},\alpha}(\iota(g))\phi_{{\bf j},\beta}(g)\, dg. 
$$
The inner integral is nothing but $I(1/2,\phi_{{\bf i},\alpha}, \phi_{{\bf j},\beta})$. Applying 
Proposition~\ref{prop:rankin-selberg} we get 
$$
\frac{{\rm vol}(\Sigma)\, \prod_{v \notin S_{\Sigma} \cup \{\infty \}} c_{\Pi_v}}
{\prod_{v \in S_{\Sigma}} L(1/2, \Pi_v \times \Sigma_v)}\,
L_f(1/2, \Pi \times \Sigma) 
\sum_{{\bf i},{\bf j},\alpha,\beta} s({\bf i},{\bf j})\langle m_{\beta},m_{\alpha}\rangle 
\Psi_{\infty}(1/2, w_{\infty,{\bf i},\alpha}, w_{\infty,{\bf j},\beta}). 
$$
The proof follows from the definition of $\langle [\Pi_{\infty}], [\Sigma_{\infty}] \rangle$.
\end{proof}

\subsection{Proof of Theorem~\ref{thm:rankin-selberg}}

The proof follows by applying $\sigma \in {\rm Aut}({\mathbb C})$ to the main identity in 
Theorem~\ref{thm:main-identity}. We now would like to know the Galois equivariance of all 
the quantities on the right hand side of the main identity. This we delineate in the following propositions: 

\begin{prop}
\label{prop:galois-poincare}
Let $\varpi \in H^{b_n}_c(S_n(K_f),\mathcal{M}_{\mu^{\vee}})$, and 
$\varsigma \in H^{b_{n-1}}_c(S_{n-1}(\iota^*K_f),\mathcal{M}_{\lambda^{\vee}})$.  
For any $\sigma \in {\rm Aut}({\mathbb C})$ we have 
$$
\sigma \left(\langle \varsigma \, , \, \varpi \rangle_{\mathcal{C}(R_f)} \right) = 
\langle {}^{\sigma}\!\varsigma \, , \, {}^{\sigma}\!\varpi \rangle_{\mathcal{C}(R_f)}.
$$
\end{prop}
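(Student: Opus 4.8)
The plan is to reduce the statement to the Galois equivariance of three separate ingredients that go into the definition of the pairing $\langle \cdot, \cdot \rangle_{\mathcal{C}(R_f)}$, as given in (\ref{eqn:pairing}): the cycle $\mathcal{C}(R_f)$, the cup product together with the pairing $\langle \cdot, \cdot\rangle : \mathcal{M}_\lambda^\vee \otimes \iota^*\mathcal{M}_\mu^\vee \to \underline{\mathbb Q}$, and the integration-over-$\mathcal{C}(R_f)$ functional (i.e. Poincar\'e duality). First I would recall, as in \cite[\S3]{raghuram-shahidi-imrn}, that $\sigma \in {\rm Aut}({\mathbb C})$ acts on $H^\bullet_c(S_n(K_f), \mathcal{M}_\mu^\vee)$ and on $H^\bullet_c(F_{n-1}(R_f), -)$ via its action on the coefficient systems (the sheaves $\mathcal{M}_\mu^\vee$, $\mathcal{M}_\lambda^\vee$ are defined over ${\mathbb Q}$, so $\sigma$ permutes the connected components of the relevant locally symmetric spaces and acts $\sigma$-semilinearly on the ${\mathbb C}$-coefficients of de Rham / Betti cohomology), and that the pull-back maps $\iota^*$ and $p^*$ are induced by morphisms of ${\mathbb Q}$-varieties, hence commute with this $\sigma$-action: ${}^\sigma(\iota^*\varpi) = \iota^*({}^\sigma\!\varpi)$ and ${}^\sigma(p^*\varsigma) = p^*({}^\sigma\!\varsigma)$.

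Next I would handle the cup product and the sheaf pairing. Cup product on singular cohomology with ${\mathbb Q}$-coefficients is defined purely combinatorially and so is $\sigma$-equivariant; the pairing $\langle \cdot, \cdot \rangle : M_\lambda^\vee \times \iota^*M_\mu^\vee \to {\mathbb Q}$ in (\ref{eqn:pairing-M-mu-M-lambda}) is defined over ${\mathbb Q}$, hence induces a morphism of sheaves that is fixed by $\sigma$. Combining, the class $p^*\varsigma \cup \iota^*\varpi \in H^{d_{n-1}}_c(F_{n-1}(R_f), \underline{\mathbb Q})$ satisfies ${}^\sigma(p^*\varsigma \cup \iota^*\varpi) = p^*({}^\sigma\!\varsigma) \cup \iota^*({}^\sigma\!\varpi)$. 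The remaining point is the behaviour of the cycle $\mathcal{C}(R_f)$ and of the evaluation map. Here I would invoke the discussion in \ref{sec:harder-mahnkopf-cycle}: the fundamental classes $[\vartheta_{x,R_f}]$ lie in $H_{d_{n-1}}(\overline{F}_{n-1,x}(R_f), \partial\overline{F}_{n-1,x}(R_f), {\mathbb Z})$ and are rational (indeed integral) objects, so $\sigma$ fixes the cycle $\mathcal{C}(R_f) \in H_{d_{n-1}}(\overline{F}_{n-1}(R_f), \partial\overline{F}_{n-1}(R_f), {\mathbb Q})$. Then the key formula $\sigma(\int_M \omega) = \int_M \omega^\sigma$ recalled in \ref{sec:harder-mahnkopf-cycle}, rewritten as $\sigma(\langle [\vartheta_M], \omega\rangle) = \langle [\vartheta_M], \omega^\sigma\rangle$, gives exactly $\sigma(\langle \mathcal{C}(R_f), \eta \rangle) = \langle \mathcal{C}(R_f), {}^\sigma\!\eta \rangle$ for any $\eta \in H^{d_{n-1}}_c(F_{n-1}(R_f), {\mathbb C})$. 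Stringing these together with $\eta = p^*\varsigma \cup \iota^*\varpi$ yields the claimed identity.

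The main obstacle, as usual in this circle of ideas, is bookkeeping: being careful that the $\sigma$-action on compactly supported cohomology used in the Poincar\'e-duality identification agrees with the one obtained via the de Rham isomorphism and with the one in \cite{raghuram-shahidi-imrn}, and that permutation of connected components of $F_{n-1}(R_f)$ under $\sigma$ is compatible with the normalizing factor $1/{\rm vol}(R_f)$ in the definition of $\mathcal{C}(R_f)$ (this volume is a positive rational, hence $\sigma$-invariant, so it causes no trouble, but it must be checked). One should also note that the maps $\iota$ and $p$ between the locally symmetric spaces, while induced by ${\mathbb Q}$-rational group homomorphisms, need the orientation choice on $X_{n-1}$ to be made once and for all independently of $\sigma$; since $\sigma$ acts trivially on the purely topological datum of an orientation, the signs $s(\mathbf{i},\mathbf{j})$ and the fundamental-class conventions are $\sigma$-stable. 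Modulo these compatibility checks, the proof is a formal consequence of: (i) $\iota^*, p^*$ are defined over ${\mathbb Q}$; (ii) cup product and the coefficient pairing are defined over ${\mathbb Q}$; (iii) $\mathcal{C}(R_f)$ is a rational homology class; and (iv) the evaluation pairing between compactly supported cohomology and homology is $\sigma$-equivariant.
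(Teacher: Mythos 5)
Your proposal is correct and follows essentially the same route as the paper, which simply cites the Galois equivariance of Poincar\'e duality (referring to Mahnkopf) together with the equivariance of $\iota^*$ and $p^*$; you have merely unwound those citations into the explicit ingredients (rationality of the cycle, of the cup product and coefficient pairing, and the formula $\sigma(\int_M\omega)=\int_M\omega^\sigma$).
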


\begin{proof}
This follows from the well-known Galois equivariance property of Poincar\'e duality 
(see, for example, Mahnkopf \cite[proof of Lemma 1.2]{mahnkopf-crelle}), coupled with the fact that
the maps $\iota^*$ and $p^*$ are Galois equivariant. 
\end{proof}

\begin{prop}
\label{prop:galois-classes}
The classes $\vartheta^0_{\Pi,\epsilon}$ and $\vartheta^0_{\Sigma,\eta}$, constructed in 
\ref{subsec:classes}, have the following behaviour under $\sigma \in {\rm Aut}({\mathbb C})$:
$$
{}^{\sigma}\!\vartheta^0_{\Pi, \epsilon}  =  \sigma(\omega_{\Sigma_f}(t_{\sigma}))\vartheta^0_{\Pi^{\sigma},\epsilon}, 
\ \ \ 
{}^{\sigma}\!\vartheta^0_{\Sigma, \eta} =  \vartheta^0_{\Sigma^{\sigma},\eta}.
$$
\end{prop}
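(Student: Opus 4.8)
The plan is to trace through the definition of the normalized classes $\vartheta^0_{\Pi,\epsilon} = \mathcal{F}^0_{\Pi_f,\epsilon,[\Pi_\infty]}(w_{\Pi_f})$ and exploit the ${\rm Aut}(\mathbb{C})$-equivariance of the normalized cohomology maps together with the explicit $\sigma$-action on Whittaker vectors. Since $\mathcal{F}^0_{\Pi_f,\epsilon,[\Pi_\infty]}$ is ${\rm Aut}(\mathbb{C})$-equivariant by construction (i.e. $\sigma \circ \mathcal{F}^0_{\Pi_f,\epsilon,[\Pi_\infty]} = \mathcal{F}^0_{\Pi^\sigma_f,\epsilon,[\Pi^\sigma_\infty]} \circ \sigma$), applying $\sigma$ to $\vartheta^0_{\Pi,\epsilon}$ gives $\mathcal{F}^0_{\Pi^\sigma_f,\epsilon,[\Pi^\sigma_\infty]}({}^\sigma w_{\Pi_f})$. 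So everything reduces to understanding ${}^\sigma w_{\Pi_f}$ and ${}^\sigma w_{\Sigma_f}$ in terms of the corresponding normalized Whittaker vectors for $\Pi^\sigma$ and $\Sigma^\sigma$.

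First I would handle $w_{\Sigma_f}$, which is the easier case. By the choices in \ref{subsec:whittaker}, $w_{\Sigma_f} = \otimes_v w_{\Sigma,v}$ with $w_{\Sigma,v} = w_{\Sigma_v}^{\rm sp}$ at $v \notin S_\Sigma$ and $w_{\Sigma,v} = w_{\Sigma_v}^0$ at $v \in S_\Sigma$. By the normalization recalled in \ref{sec:newvectors}, ${}^\sigma w_{\pi}^0 = w_{\pi^\sigma}^0$ place-by-place (with $t_{\pi^\sigma}=t_\pi$), and the spherical vector maps to the spherical vector; hence ${}^\sigma w_{\Sigma_f} = w_{\Sigma^\sigma_f}$, giving ${}^\sigma\!\vartheta^0_{\Sigma,\eta} = \vartheta^0_{\Sigma^\sigma,\eta}$ directly. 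For $w_{\Pi_f}$ the subtlety is at the places $v \in S_\Sigma$, where $w_{\Pi,v}$ was not a standard new/essential vector but the explicit vector supported on $N_{n-1}(\mathbb{Q}_v) t_{\Sigma_v} K_{n-1}(\mathfrak{f}(\Sigma_v))$ with values $w_{\Pi,v}(ut_{\Sigma_v}k) = \psi(u)\omega_{\Sigma_v}^{-1}(k_{n-1,n-1})$. Here I would compute ${}^\sigma w_{\Pi,v}(g_v) = \sigma(w_{\Pi,v}(t_{\sigma,n}g_v))$ and check how the translation by $t_{\sigma,n}$ interacts with the support condition and the character $\omega_{\Sigma_v}^{-1}$; this produces a local scalar which, after taking the product over $v \in S_\Sigma$ and using the relation between $\sigma(\omega_{\Sigma_f}(t_\sigma))$ and the local Gauss-sum-type quantities (i.e. the behaviour encoded by the cyclotomic character $t_\sigma$ acting on the central character), assembles into the factor $\sigma(\omega_{\Sigma_f}(t_\sigma))$. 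At the places $v \notin S_\Sigma \cup \{\infty\}$ we again have $w_{\Pi,v} = w_{\Pi_v}^0$, so ${}^\sigma w_{\Pi,v} = w_{\Pi^\sigma_v}^0$ contributes nothing.

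The main obstacle will be the local computation at $v \in S_\Sigma$: one must show precisely that $\otimes_{v \in S_\Sigma} {}^\sigma w_{\Pi,v} = \sigma(\omega_{\Sigma_f}(t_\sigma)) \cdot (\text{the analogous vector for }\Pi^\sigma,\Sigma^\sigma)$, i.e. that the explicit vector chosen there transforms under $\sigma$ exactly up to the scalar $\sigma(\omega_{\Sigma_f}(t_\sigma))$. This amounts to unwinding the definition of the $\sigma$-action (the shift by $t_{\sigma,n}$, which on $G_{n-1}$ restricts to a diagonal matrix whose last entry contributes via $\omega_{\Sigma_v}^{-1}$) and comparing with how the defining data of $w_{\Pi,v}$ — the support coset $N_{n-1}t_{\Sigma_v}K_{n-1}(\mathfrak{f}(\Sigma_v))$, the character $\psi$, and $\omega_{\Sigma_v}^{-1}$ — behave under $\sigma$, using $t_{\Sigma^\sigma_v} = t_{\Sigma_v}$ and $\mathfrak{f}(\Sigma^\sigma_v) = \mathfrak{f}(\Sigma_v)$. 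Once this local identity is in place, applying $\mathcal{F}^0$ and invoking its equivariance finishes the proof; I would also remark that since $w_{\Pi_f}$ and $w_{\Sigma_f}$ are compatibly $K_f$- and $\iota^*K_f$-fixed, the identification of classes across different levels causes no difficulty.
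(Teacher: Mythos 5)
Your proposal is correct and follows essentially the same route as the paper: reduce via the ${\rm Aut}(\mathbb{C})$-equivariance of $\mathcal{F}^0$ to computing ${}^{\sigma}w_{\Pi_f}$ and ${}^{\sigma}w_{\Sigma_f}$ place by place, note that normalized new vectors and spherical vectors are $\sigma$-equivariant at $v\notin S_\Sigma$, and at $v\in S_\Sigma$ check that the translation by $t_{\sigma,n}$ (whose restriction to $G_{n-1}$ is a diagonal unit matrix normalizing the support coset) scales the explicit vector by $\sigma(\omega_{\Sigma_v}(t_{\sigma,v}))$, whose product over $v\in S_\Sigma$ is $\sigma(\omega_{\Sigma_f}(t_\sigma))$. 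The only small inaccuracy is the appeal to ``Gauss-sum-type quantities'': no Gauss sums enter the proposition itself (they appear only in the subsequent corollary via (\ref{eqn:variation1})); one just uses that $\omega_{\Sigma_v}(t_{\sigma,v})=1$ at the unramified places.
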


\begin{proof}
By definition of the classes, and the Galois equivariance of $\mathcal{F}^0$, we have 
$$
{}^{\sigma}\!\vartheta^0_{\Pi, \epsilon}  = {}^{\sigma}\!\mathcal{F}_{\Pi_f,\epsilon,[\Pi_{\infty}]}^0(w_{\Pi_f})
= \mathcal{F}_{\Pi^{\sigma}_f,\epsilon,[\Pi_{\infty}]}^0({}^{\sigma}\!w_{\Pi_f}).
$$
Next, we note that by the choice of the vector $w_{\Pi_f}$, we have
$$
{}^{\sigma}\!w_{\Pi_f} = {}^{\sigma}\!(\otimes_{v\notin S_{\Sigma}}w_{\Pi_v}^0 \otimes_{v \in S_{\Sigma}} w_{\Pi,v}) = 
\otimes_{v\notin S_{\Sigma}} {}^{\sigma}\!w_{\Pi_v}^0 \otimes_{v \in S_{\Sigma}} {}^{\sigma}\!w_{\Pi,v},
$$
the second equality is due to the compatibility of local and global actions of $\sigma$ .
For $v \notin S_{\Sigma}$ we know that ${}^{\sigma}\!w_{\Pi_v}^0 = w_{\Pi^{\sigma}_v}^0$. However, for 
$v \in S_{\Sigma}$, we note first that the support of ${}^{\sigma}\!w_{\Pi,v}$ restricted to $G_{n-1}$ is also 
the same double coset $N_{n-1}({\mathbb Q}_v)t_{\Sigma_v}K_{n-1}(\mathfrak{f}(\Sigma_v))$, and on this double 
coset it is given by 
\begin{eqnarray*}
{}^{\sigma}\!w_{\Pi,v}|_{G_{n-1}}(ut_{\Sigma_v}k) & = &
\sigma\left(w_{\Pi,v}|_{G_{n-1}}\left(\left(\begin{array}{cccc} 
t_{\sigma,v}^{-(n-1)} & & & \\
& t_{\sigma,v}^{-(n-2)} & & \\
& & \ldots & \\
& & & t_{\sigma,v}^{-1}\end{array}\right)ut_{\Sigma_v}k\right)\right) \\
& = &  \sigma(\psi_v(t_{\sigma}^{-1}u)\omega_{\Sigma_v}^{-1}(t_{\sigma,v}^{-1}k_{n-1,n-1})) \\
& = & \sigma(\omega_{\Sigma_v}(t_{\sigma,v})) w_{\Pi^{\sigma},v}|_{G_{n-1}}(ut_{\Sigma_v}k).
\end{eqnarray*}
Hence, for $v \in S_{\Sigma}$ we have 
${}^{\sigma}\!w_{\Pi,v} = \sigma(\omega_{\Sigma_v}(t_{\sigma,v}))w_{\Pi_v^{\sigma}}$. Noting that 
$\otimes_{v \in S_{\Sigma}} \omega_{\Sigma_v}(t_{\sigma,v}) = \omega_{\Sigma_f}(t_{\sigma})$, we get 
${}^{\sigma}\!w_{\Pi_f} = \sigma(\omega_{\Sigma_f}(t_{\sigma})) w_{\Pi^{\sigma}_f}$, which finishes the 
proof of the first assertion of the proposition. 
The proof of Galois equivariance of the class $\vartheta^0_{\Sigma, \eta}$ is similar (and simpler). 
\end{proof}

For later use we note that the above variance for $\vartheta^0_{\Pi,\epsilon}$ may also be stated in terms 
of Gauss sums. 

\begin{cor}
\label{cor:gauss-sum}
For any $\sigma \in {\rm Aut}({\mathbb C})$ we have 
$$
{}^{\sigma}\!\vartheta^0_{\Pi, \epsilon}  =  
\frac{\sigma(\mathcal{G}(\omega_{\Sigma_f}))}
{\mathcal{G}(\omega_{\Sigma_f^{\sigma}})}
\vartheta^0_{\Pi^{\sigma},\epsilon}
$$
\end{cor}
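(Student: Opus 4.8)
The plan is to derive Corollary~\ref{cor:gauss-sum} directly from Proposition~\ref{prop:galois-classes} by rewriting the scalar $\sigma(\omega_{\Sigma_f}(t_{\sigma}))$ in terms of Gauss sums. The key input is the identity \eqref{eqn:variation1}, which, applied to the Hecke character $\xi = \omega_{\Sigma}$ (the central character of $\Sigma$, which is an algebraic Hecke character of ${\mathbb Q}$ since $\Sigma$ is regular algebraic), reads
$$
\sigma(\omega_{\Sigma_f}(t_{\sigma})) \ = \ \frac{\sigma(\mathcal{G}(\omega_{\Sigma_f}))}{\mathcal{G}(\omega_{\Sigma_f}^{\sigma})}.
$$

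First I would observe that Proposition~\ref{prop:galois-classes} gives ${}^{\sigma}\!\vartheta^0_{\Pi,\epsilon} = \sigma(\omega_{\Sigma_f}(t_{\sigma}))\,\vartheta^0_{\Pi^{\sigma},\epsilon}$. Then I would substitute the above expression for $\sigma(\omega_{\Sigma_f}(t_{\sigma}))$, and finally note that $\omega_{\Sigma_f}^{\sigma} = \omega_{\Sigma_f^{\sigma}}$ — that is, the central character of $\Sigma^{\sigma}$ is the $\sigma$-twist of the central character of $\Sigma$ — so that $\mathcal{G}(\omega_{\Sigma_f}^{\sigma}) = \mathcal{G}(\omega_{\Sigma_f^{\sigma}})$. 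Putting these together yields the claimed formula.

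There is really no obstacle here: this is a bookkeeping corollary that simply repackages Proposition~\ref{prop:galois-classes} using \eqref{eqn:variation1}. The only point that warrants a line of justification is the compatibility $\omega_{\Sigma_f^{\sigma}} = (\omega_{\Sigma_f})^{\sigma}$, which follows from the definition of the $\sigma$-action on automorphic representations (the central character of $\Sigma^{\sigma}$ is by construction the $\sigma$-conjugate of that of $\Sigma$), together with the compatibility of the Gauss sum with the ${\rm Aut}({\mathbb C})$-action built into the notation $\mathcal{G}(\xi_f^{\sigma})$ used in \eqref{eqn:variation1}. Thus the proof is a two-line substitution, and I would present it as such.
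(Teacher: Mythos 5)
Your proposal is correct and is precisely the paper's argument: the stated proof is ``Follows from the above proposition and (\ref{eqn:variation1})'', i.e., combine Proposition~\ref{prop:galois-classes} with \eqref{eqn:variation1} applied to $\xi=\omega_{\Sigma}$. Your extra remark on the compatibility $\omega_{\Sigma_f^{\sigma}}=(\omega_{\Sigma_f})^{\sigma}$ is a reasonable point to make explicit, and nothing further is needed.
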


\begin{proof}
Follows from the above proposition and (\ref{eqn:variation1}).
\end{proof}

\begin{prop}
\label{prop:galois-localvalues}
For a finite place $v$ of ${\mathbb Q}$, let $\Pi_v$ and $\Sigma_v$ be (any) irreducible 
admissible  representations of $G_n({\mathbb Q}_v)$ and $G_{n-1}({\mathbb Q}_v)$. Then
$$
\sigma(L(1/2, \Pi_v \times \Sigma_v)) = L(1/2, \Pi_v^{\sigma} \times \Sigma_v^{\sigma}).
$$
\end{prop}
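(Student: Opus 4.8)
The plan is to reduce everything to the known rationality of local $L$-factors of generic representations and their behaviour under $\sigma \in \mathrm{Aut}(\mathbb{C})$. First I would recall the structure of the local Rankin--Selberg $L$-factor $L(s, \Pi_v \times \Sigma_v)$: by the local Langlands correspondence (or, for generic representations, directly from the theory of Jacquet--Piatetski-Shapiro--Shalika), it is a product of factors of the form $(1 - \alpha q_v^{-s})^{-1}$ where the $\alpha$ range over a finite multiset of nonzero complex numbers built from the Satake/Langlands parameters of $\Pi_v$ and $\Sigma_v$. The essential point is that the poles of $L(s,\Pi_v \times \Sigma_v)$, viewed in the variable $q_v^{-s}$, are at the inverses of these $\alpha$'s, and that the whole factor is a rational function in $q_v^{-s}$ with coefficients that are algebraic over the field generated by the parameters.

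Next I would invoke the known compatibility of local $L$-factors with the action of $\mathrm{Aut}(\mathbb{C})$. Concretely, for an irreducible admissible representation $\tau_v$ of $\mathrm{GL}_m(\mathbb{Q}_v)$ and $\sigma \in \mathrm{Aut}(\mathbb{C})$, the conjugate representation $\tau_v^\sigma$ has $L$-factor obtained by applying $\sigma$ to the coefficients of the rational function $L(s,\tau_v)$ in $q_v^{-s}$; this is a standard fact (it follows, e.g., from the explicit Langlands classification of $\tau_v$ as a quotient of an induced representation from essentially square-integrable pieces, together with the known behaviour of such inducing data under $\sigma$, or from Waldspurger-type compatibility statements). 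The same statement holds verbatim for pairs: $L(s, \Pi_v^\sigma \times \Sigma_v^\sigma)$ is obtained from $L(s,\Pi_v \times \Sigma_v)$ by applying $\sigma$ to the coefficients in $q_v^{-s}$.

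Given this, the proof of the proposition is then a matter of evaluation: since $q_v^{-1/2}$ is in general not algebraic, I cannot simply ``plug in'' $s = 1/2$ blindly, so the cleanest route is to write $L(s,\Pi_v \times \Sigma_v) = P(q_v^{-s})$ where $P$ is a rational function over $\overline{\mathbb{Q}}$ (indeed over a number field attached to the parameters), note that $P$ has no pole at the point $q_v^{-1/2}$ because $s = 1/2$ is a point at which the local factor is finite — and here one uses that the value $1/2$ is critical, hence not a pole of any of the $L(s,\Pi_v \times \Sigma_v)$ — wait, more carefully: one observes directly that $\sigma$ fixes $q_v$, hence fixes $q_v^{-1/2}$ only up to sign; so the correct formulation is that $L(1/2, \Pi_v \times \Sigma_v)$ lies in the field generated over $\mathbb{Q}(\sqrt{q_v})$ by the parameters, and applying $\sigma$, which permutes the parameters according to the recipe $\tau_v \mapsto \tau_v^\sigma$ and fixes $q_v^{-1/2}$, yields exactly $L(1/2, \Pi_v^\sigma \times \Sigma_v^\sigma)$. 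Since $q_v^{-1/2} \in \mathbb{R}$ and $\sigma$ need not fix it, one should instead phrase the argument so that only integer powers of $q_v$ appear: because $s=1/2$ is critical and the representations are (twists of) unitary generic representations, the relevant $L$-factor at $s = 1/2$ is a value of a rational function in $q_v^{-1/2}$; the cleanest fix is to note the local factor $L(s,\Pi_v \times \Sigma_v)$ is a rational function of $q_v^{-s}$ whose evaluation at $s = 1/2$ lies in $\overline{\mathbb{Q}}(q_v^{-1/2})$, and $\sigma(q_v^{-1/2}) = \pm q_v^{-1/2}$, while the sign ambiguity is killed because the poles/zeros of $P$ pair up under $\tau_v \mapsto \tau_v^\sigma$ consistently — I would handle this by working with the substitution $X = q_v^{-s}$ throughout and only specializing at the very end.

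The main obstacle, then, is precisely this subtlety about $q_v^{-1/2}$ not being $\sigma$-fixed: one must be careful that the half-integer power does not obstruct the argument. I expect the resolution is exactly the one used in the companion papers (e.g. Mahnkopf \cite{mahnkopf-jussieu}, or our paper \cite{raghuram-shahidi-imrn}): the local factor at a critical half-integer point, for the pairs arising here, actually lies in $\overline{\mathbb{Q}}$ (not merely $\overline{\mathbb{Q}}(q_v^{-1/2})$), either because $n + (n-1)$ is odd forces a cancellation of the $\sqrt{q_v}$, or because one tracks the precise form of the $\alpha$'s. So the key step — and the one deserving the most care — is establishing that $L(1/2, \Pi_v \times \Sigma_v) \in \overline{\mathbb{Q}}$ and that $\sigma$ acts on it through the recipe on local components; everything else is a formal manipulation with rational functions of $X = q_v^{-s}$ and the standard $\mathrm{Aut}(\mathbb{C})$-equivariance of local $L$-factors of generic representations of $\mathrm{GL}_m$.
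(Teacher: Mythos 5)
You have correctly located the two difficulties---the $\mathrm{Aut}(\mathbb{C})$-equivariance of the local factor attached to a \emph{pair}, and the fact that $q_v^{-1/2}$ is not fixed by $\sigma$---but the proposal resolves neither of them; it ends by declaring the key step the one ``deserving the most care'' and then defers it. Your assertion that $L(s,\Pi_v^{\sigma}\times\Sigma_v^{\sigma})$ is obtained from $L(s,\Pi_v\times\Sigma_v)$ by applying $\sigma$ to coefficients is not a formal consequence of the single-representation case, because the local Langlands correspondence does \emph{not} commute with $\sigma$ on the nose: by Henniart, $\pi(\tau)^{\sigma}=\pi(\tau^{\sigma})\epsilon_{\sigma}^{m-1}$, where $\epsilon_{\sigma}(x)=\sigma(|x|^{1/2})/|x|^{1/2}$. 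The paper's proof therefore introduces the automorphic tensor product $\Pi_v\boxtimes\Sigma_v:=\pi(\tau(\Pi_v)\otimes\tau(\Sigma_v))$ on ${\rm GL}_{n(n-1)}$ and checks that $(\Pi_v\boxtimes\Sigma_v)^{\sigma}=(\Pi_v^{\sigma}\boxtimes\Sigma_v^{\sigma})\otimes\epsilon_{\sigma}^{(1-n)(2-n)}$, where the correction character disappears precisely because $(1-n)(2-n)$ is a product of consecutive integers, hence even. Your appeal to ``Waldspurger-type compatibility statements'' hides exactly this point, and for a single $\mathrm{GL}_m$ with $m$ even the naive equivariance you invoke is actually false without the $\epsilon_{\sigma}$-twist.

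The half-integer issue is then handled not by chasing $\sqrt{q_v}$ through the inducing data but by the reduction just described: once the pair is replaced by the single representation $\pi=\Pi_v\boxtimes\Sigma_v$ of ${\rm GL}_m(F)$ with $m=n(n-1)$ \emph{even}, Clozel's Lemma 4.6 gives ${}^{\sigma}L(s+\frac{1-m}{2},\pi)=L(s+\frac{1-m}{2},\pi^{\sigma})$, with ${}^{\sigma}$ acting on the coefficients of a polynomial in $q^{-s}$, and $L(1/2,\pi)$ is the value of that polynomial at $s=m/2$, i.e.\ at the \emph{rational} number $q^{-m/2}$, which $\sigma$ fixes. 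Your parity heuristic (the evenness forcing cancellation of $\sqrt{q_v}$) is morally the right bookkeeping in the unramified case, but the proposition is asserted for arbitrary irreducible admissible $\Pi_v$ and $\Sigma_v$---no criticality, unitarity, or algebraicity of parameters is assumed---so the argument must run through Clozel's lemma in the Tate-twisted normalization rather than through purity of Satake parameters; the appeals in your sketch to ``$s=1/2$ critical'' and ``unitary generic'' would have to be deleted, as they are not hypotheses of the statement.
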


\begin{proof}
Let $F = {\mathbb Q}_v$, or for that matter, any non-archimedean local field with its associated baggage of notations
like $\mathcal{O}$, $\mathcal{P}$, $q$, etc. Let $\pi$ be any irreducible admissible representation of ${\rm GL}_m(F)$.
From Clozel \cite[Lemma 4.6]{clozel} we have 
$$
{}^{\sigma}\!L\left(s+\frac{1-m}{2},\pi\right) = L\left(s+\frac{1-m}{2},\pi^{\sigma}\right).
$$ 
In the left hand side, if $L(s+(1-m)/2,\pi) = P(q^{-s})^{-1}$ for a polynomial $P(X) \in {\mathbb C}[X]$ with $P(0)=1$, then ${}^{\sigma}\!P(q^{-s})$ is obtained by applying $\sigma$ to the coefficients of $P(X)$. Now assume that $m$ is even. Then 
\begin{eqnarray*}
\sigma(L(1/2,\pi)) & = & \sigma\left(L\left(s+\frac{1-m}{2},\pi\right)|_{s=m/2}\right) \\
& = & \sigma(P(q^{-m/2})^{-1}) \\
& = & {}^{\sigma}\!P(q^{-m/2})^{-1} \ \ \ \ \mbox{(since $m$ is even)}\\
& = & {}^{\sigma}\!L(1/2,\pi).
\end{eqnarray*}
From the above mentioned lemma we have
\begin{equation}
\label{eqn:clozel}
\sigma(L(1/2,\pi)) = L(1/2,\pi^{\sigma}).
\end{equation}

We need a result of Henniart about the local Langlands correspondence for ${\rm GL}_m(F)$. 
We denote this correspondence as $\pi \mapsto \tau(\pi)$ and $\tau \mapsto \pi(\tau)$ between irreducible admissible 
representations $\pi$ of ${\rm GL}_m(F)$ and $m$-dimensional semisimple representations $\tau$ of the Weil-Deligne group
$W_F' = W_F \times {\rm SL}_2({\mathbb C})$. For any $\sigma \in {\rm Aut}({\mathbb C})$, we let $\epsilon_{\sigma}$ 
denote the quadratic character $x \mapsto \sigma(|x|^{1/2})/|x|^{1/2}$ of $F^*$. From Henniart \cite[7.4]{henniart} we have 
\begin{equation}
\label{eqn:henniart}
\pi(\tau)^{\sigma} = \pi(\tau^{\sigma})\epsilon_{\sigma}^{m-1}, \ \mbox{and} \ 
\tau(\pi)^{\sigma} = \tau(\pi^{\sigma})\epsilon_{\sigma}^{m-1}.
\end{equation}
In \cite{henniart} the Langlands correspondence is stated between the Grothendieck group 
generated by irreducible representations of the Weil group $W_F$ on the one hand and the Grothendieck group
generated by irreducible supercuspidal representations on the other. In particular, (\ref{eqn:henniart}) is stated for such representations. However, one can easily see that (\ref{eqn:henniart}) remains true as we have 
stated it if one defines the action of $\sigma \in {\rm Aut}({\mathbb C})$ on semisimple representations of $W_F'$ by 
$$
(\tau_1\otimes\mathcal{S}_{m_1} \oplus \cdots \oplus \tau_r\otimes\mathcal{S}_{m_r})^{\sigma} = 
\tau_1^{\sigma}\otimes\mathcal{S}_{m_1} \oplus \cdots \oplus \tau_1^{\sigma}\otimes\mathcal{S}_{m_1}
$$
for irreducible representations $\tau_i$ of $W_F$, and integers $m_i$, 
where for any integer $k \geq 1$ the $k$-dimensional
irreducible representation of ${\rm SL}_2({\mathbb C})$ is denoted $\mathcal{S}_k$.

Now, let $\pi_1$ and $\pi_2$ be irreducible admissible representations of ${\rm GL}_{m_1}(F)$ and 
${\rm GL}_{m_2}(F)$, respectively. Define the `automorphic tensor product' by 
$\pi_1\boxtimes\pi_2 := \pi(\tau(\pi_1)\otimes\tau(\pi_2))$. One can check from (\ref{eqn:henniart}) that 
for any $\sigma \in {\rm Aut}({\mathbb C})$ we have 
\begin{equation}
\label{eqn:box}
(\pi_1\boxtimes\pi_2)^{\sigma} = (\pi_1^{\sigma} \boxtimes \pi_2^{\sigma}) 
\otimes \epsilon_{\sigma}^{(1-m_1)(1-m_2)}.
\end{equation}

The proposition follows from (\ref{eqn:clozel}) and (\ref{eqn:box}) by taking $m_1 = n$, $m_2=n-1$, 
$\pi_1 = \Pi_v$, $\pi_2 = \Sigma_v$, and $\pi = \pi_1\boxtimes\pi_2$, while keeping in mind that 
$L(s,\pi_1 \times \pi_2)  = L(s,\pi_1\boxtimes\pi_2)$. 
\end{proof}

Albeit the above proposition is not hard to prove, we wish to emphasize the fact that it is a crucial ingredient
in our paper. The moral being that the possibly transcendental parts of special values of $L$-functions are 
already captured by partial $L$-functions, i.e., we can ignore finitely many places as these local $L$-values 
are in the rationality field.

\begin{prop}
\label{prop:galois-c}
$$
\sigma(c_{\Pi_v}) = c_{\Pi^{\sigma}_v}.
$$
\end{prop}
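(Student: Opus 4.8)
The plan is to deduce Proposition~\ref{prop:galois-c} from the analogous equivariance of the \emph{essential} vector, which in turn will follow from its defining analytic property together with Clozel's lemma on local $L$-factors already used in the proof of Proposition~\ref{prop:galois-localvalues}. First I reduce to showing ${}^{\sigma}\!w_{\Pi_v}^{\rm ess} = w_{\Pi_v^{\sigma}}^{\rm ess}$. Recall from \ref{sec:newvectors} that $w_{\Pi_v}^0 = c_{\Pi_v}\,w_{\Pi_v}^{\rm ess}$, that ${}^{\sigma}\!w_{\Pi_v}^0 = w_{\Pi_v^{\sigma}}^0$, and that $w \mapsto {}^{\sigma}\!w$ is $\sigma$-semilinear, so ${}^{\sigma}(c\,w) = \sigma(c)\,{}^{\sigma}\!w$. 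Applying ${}^{\sigma}$ to $w_{\Pi_v}^0 = c_{\Pi_v}w_{\Pi_v}^{\rm ess}$ gives $w_{\Pi_v^{\sigma}}^0 = \sigma(c_{\Pi_v})\,{}^{\sigma}\!w_{\Pi_v}^{\rm ess}$, while by definition $w_{\Pi_v^{\sigma}}^0 = c_{\Pi_v^{\sigma}}\,w_{\Pi_v^{\sigma}}^{\rm ess}$; since ${}^{\sigma}\!w_{\Pi_v}^{\rm ess}$ is a nonzero new vector of $\Pi_v^{\sigma}$ (the twisted action carries new vectors to new vectors, as is already used in \ref{sec:newvectors}) and the space of new vectors is one-dimensional, the equality ${}^{\sigma}\!w_{\Pi_v}^{\rm ess} = w_{\Pi_v^{\sigma}}^{\rm ess}$ forces $\sigma(c_{\Pi_v}) = c_{\Pi_v^{\sigma}}$.

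To prove ${}^{\sigma}\!w_{\Pi_v}^{\rm ess} = w_{\Pi_v^{\sigma}}^{\rm ess}$ I use the characterizing property of the essential vector: it suffices to check $\Psi(s, {}^{\sigma}\!w_{\Pi_v}^{\rm ess}, w_{\rho}^{\rm sp}) = L(s, \Pi_v^{\sigma}\times\rho)$ for every irreducible unramified generic $\rho$ of $G_{n-1}(\mathbb{Q}_v)$. Writing $\rho = \rho_0^{\sigma}$ with $\rho_0 = \rho^{\sigma^{-1}}$ (still unramified generic) so that $w_{\rho}^{\rm sp} = {}^{\sigma}\!w_{\rho_0}^{\rm sp}$, I expand $\Psi(s, {}^{\sigma}\!w_{\Pi_v}^{\rm ess}, {}^{\sigma}\!w_{\rho_0}^{\rm sp})$ by the Iwasawa decomposition, the sum running over $a_{\mathbf m} = {\rm diag}(\varpi^{m_1},\dots,\varpi^{m_{n-1}})$, $\mathbf m \in \mathbb Z^{n-1}$. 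The key observations are that the twisting elements $t_{\sigma,n}, t_{\sigma,n-1}$ lie in $T_n(\mathcal O_v)$, resp.\ $T_{n-1}(\mathcal O_v)$, with lower-right entry $1$, and that $\iota$ sends $G_{n-1}(\mathcal O_v)$ into the mirahoric subgroup $K_n(\mathfrak f(\Pi_v))$ with trivial $(n,n)$-entry; the new-vector transformation law then yields ${}^{\sigma}\!w_{\Pi_v}^{\rm ess}(\iota(a_{\mathbf m})) = \sigma\!\big(w_{\Pi_v}^{\rm ess}(\iota(a_{\mathbf m}))\big)$ and ${}^{\sigma}\!w_{\rho_0}^{\rm sp}(a_{\mathbf m}) = \sigma\!\big(w_{\rho_0}^{\rm sp}(a_{\mathbf m})\big)$, with the $K$-integrals trivial and the modulus and volume factors rational. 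By the support of Whittaker functions ($w_{\rho_0}^{\rm sp}$ forces $m_1 \geq \cdots \geq m_{n-1}$ and $w_{\Pi_v}^{\rm ess}$ bounds $m_{n-1}$ from below) the coefficient of each power $(q^{-s})^k$ is a \emph{finite} sum, so no difficulty arises in commuting $\sigma$ past it; comparing with $\Psi(s, w_{\Pi_v}^{\rm ess}, w_{\rho_0}^{\rm sp}) = L(s,\Pi_v\times\rho_0) = \sum_k h_k\,(q^{-s})^k$ and using $\sigma(q^{-k/2}) = \epsilon_{\sigma}(\varpi)^{k}\,q^{-k/2}$ (with $\epsilon_{\sigma}$ the quadratic character $x \mapsto \sigma(|x|^{1/2})/|x|^{1/2}$ of the proof of Proposition~\ref{prop:galois-localvalues}) one obtains $\Psi(s, {}^{\sigma}\!w_{\Pi_v}^{\rm ess}, {}^{\sigma}\!w_{\rho_0}^{\rm sp}) = \sum_k \epsilon_{\sigma}(\varpi)^{k}\,\sigma(h_k)\,(q^{-s})^k$.

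Finally I identify the right-hand side with $L(s, \Pi_v^{\sigma}\times\rho_0^{\sigma})$, which is exactly Clozel's Lemma 4.6 combined with Henniart's relation (\ref{eqn:henniart}), carried out as in the proof of Proposition~\ref{prop:galois-localvalues}: realizing $\Pi_v\times\rho_0$ as $\Pi_v\boxtimes\rho_0$, a representation of $G_{n(n-1)}(\mathbb Q_v)$, applying $\sigma$ to the $q^{-s}$-coefficients of $L\big(s+\tfrac{1-n(n-1)}{2}, \Pi_v\boxtimes\rho_0\big)$ gives $L\big(s+\tfrac{1-n(n-1)}{2}, (\Pi_v\boxtimes\rho_0)^{\sigma}\big)$; since $n(n-1)$ is even the half-integral shift contributes precisely $\epsilon_{\sigma}(\varpi)^{k}$ in degree $k$, and since $(\Pi_v\boxtimes\rho_0)^{\sigma} = (\Pi_v^{\sigma}\boxtimes\rho_0^{\sigma})\otimes\epsilon_{\sigma}^{(n-1)(n-2)}$ with $(n-1)(n-2)$ even, the twist $\epsilon_{\sigma}^{(n-1)(n-2)}$ is trivial. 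Hence $\epsilon_{\sigma}(\varpi)^{k}\sigma(h_k)$ is the $k$-th $L$-coefficient of $\Pi_v^{\sigma}\times\rho_0^{\sigma}$, so $\Psi(s, {}^{\sigma}\!w_{\Pi_v}^{\rm ess}, w_{\rho}^{\rm sp}) = L(s, \Pi_v^{\sigma}\times\rho)$ for all unramified generic $\rho$, proving ${}^{\sigma}\!w_{\Pi_v}^{\rm ess} = w_{\Pi_v^{\sigma}}^{\rm ess}$ and completing the argument. I expect the main obstacle to be the bookkeeping of the half-integral powers of $q$ — equivalently the $\epsilon_{\sigma}$-twists — through the Iwasawa expansion and through the passage to $G_{n(n-1)}$: everything hinges on the fact that both $n(n-1)$ and $(n-1)(n-2)$ are even, so that the quadratic character $\epsilon_{\sigma}$ contributes only signs that cancel in the end.
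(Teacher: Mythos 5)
Your argument is correct. The paper gives no proof of Proposition~\ref{prop:galois-c} at all --- it simply refers to Mahnkopf --- and what you have written is, in substance, the argument that reference contains: reduce the statement to the equivariance ${}^{\sigma}w^{\rm ess}_{\Pi_v} = w^{\rm ess}_{\Pi_v^{\sigma}}$ of the essential vector, and verify that by feeding the defining integral identity against spherical vectors into the Galois equivariance of local $L$-factors. Your bookkeeping checks out: $t_{\sigma,n}$ and $\iota(G_{n-1}(\mathcal{O}_v))$ lie in the mirahoric subgroup with $(n,n)$-entry $1$, so the twisted action acts on torus values simply by $\sigma$; each $(q^{-s})^k$-coefficient is a finite sum by the support properties of Whittaker functions restricted to $\iota(T_{n-1})$; and the shift by $\tfrac{1-n(n-1)}{2}$ in Clozel's lemma together with the trivial twist $\epsilon_{\sigma}^{(n-1)(n-2)}$ from Henniart's relation produce exactly the factor $\epsilon_{\sigma}(\varpi)^k$ that the integral side produces. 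One remark worth making explicit: since ${}^{\sigma}w^0_{\Pi_v} = w^0_{\Pi_v^{\sigma}}$, the proposition is not merely implied by but \emph{equivalent to} the equivariance of the essential vector, so your proof shows that the caveat in \ref{sec:newvectors} that ``essential vectors are not ${\rm Aut}(\mathbb{C})$-equivariant'' must be read as ``not manifestly equivariant'': the equivariance holds, but only after precisely the computation you carried out, whereas it is immediate for $w^0_{\Pi_v}$.
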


\begin{proof}
See Mahnkopf \cite[p.621]{mahnkopf-jussieu}, where it is mentioned that the proof is the same argument as in the proof of \cite[Proposition 2.3(c)]{mahnkopf-crelle}.
\end{proof}

\begin{proof}[Proof of Theorem~\ref{thm:rankin-selberg}] 
Apply $\sigma \in {\rm Aut}({\mathbb C})$ to the main identity in  
Theorem~\ref{thm:main-identity} to get 
$$
\sigma\left(\frac{L_f(1/2,\Pi \times \Sigma)}
{p^{\epsilon}(\Pi)p^{\eta}(\Sigma)p_{\infty}(\mu,\lambda)}\right) 
=  \sigma\left(\frac{\prod_{v \in S_{\Sigma}} L(1/2, \Pi_v \times \Sigma_v)}
{{\rm vol}(\Sigma)\, \prod_{v \notin S_{\Sigma}} c_{\Pi_v}}\, 
\langle \vartheta^0_{\Sigma,\eta}, \vartheta^0_{\Pi,\epsilon}\rangle_{\mathcal{C}(R_f)}\right).
$$
Applying Propositions~\ref{prop:galois-poincare}, 
\ref{prop:galois-localvalues}, \ref{prop:galois-c}, and Corollary~\ref{cor:gauss-sum} to the right hand side we have
$$
\frac{\prod_{v \in S_{\Sigma^{\sigma}}} L(1/2, \Pi_v^{\sigma} \times \Sigma_v^{\sigma})}
{{\rm vol}(\Sigma^{\sigma})\, \prod_{v \notin S_{\Sigma^{\sigma}}} c_{\Pi_v^{\sigma}}}\, 
\frac{\sigma(\mathcal{G}(\omega_{\Sigma_f}))}
{\mathcal{G}(\omega_{\Sigma_f^{\sigma}})}
\langle \vartheta^0_{\Sigma^{\sigma},\eta}, \vartheta^0_{\Pi^{\sigma},\epsilon}\rangle_{\mathcal{C}(R_f)}  
 = \frac{\sigma(\mathcal{G}(\omega_{\Sigma_f}))}
{\mathcal{G}(\omega_{\Sigma_f^{\sigma}})}
\frac{L_f(1/2,\Pi^{\sigma} \times \Sigma^{\sigma})}
{p^{\epsilon}(\Pi^{\sigma})p^{\eta}(\Sigma^{\sigma})p_{\infty}(\mu,\lambda)}
$$
from which the theorem follows. 
\end{proof}

\subsection{The effect of changing rational structures}
\label{sec:different-rational-structures}
In this section we study the effect of changing rational structures involved in the definition of the 
periods. Recall, from \cite{raghuram-shahidi-imrn}, that the period $p^{\epsilon}(\Pi)$ is defined by comparing a rational structure on the Whittaker model $W(\Pi_f)$ with that on a suitable cohomology space, namely, 
$H^{b_n}(\mathfrak{g}_{\infty},K_{\infty}^0, V_{\Pi}\otimes M_{\mu}^{\vee})(\epsilon)$. The rational structure 
on this cohomology space comes ultimately from a canonical ${\mathbb Z}$-structure on singular cohomology, however,
the rational structure on the Whittaker model is not so canonical. In this section we draw attention to some other 
(very natural looking) rational structures on $W(\Pi_f)$. It should be borne in mind that a rational structure 
on $W(\Pi_f)$ is unique up to homotheties, so there is indeed an emphasis on the ``naturality" of the definition.

For each $r \in {\mathbb Z}$, we define an action of ${\rm Aut}({\mathbb C})$ on $W(\Pi_f)$ as follows: 
For $\sigma \in {\rm Aut}({\mathbb C})$, $w \in W(\Pi_f, \psi)$,  define 
$$
\sigma_r(w)(g) = \sigma\left(w\left(\left(\begin{array}{cccc}
t_{\sigma}^{r-(n-1)} & & & \\
& t_{\sigma}^{r-(n-2)} & & \\
& & \ldots & \\
& & & t_{\sigma}^r \end{array}\right)g\right)\right)
$$
for all $g \in G_n({\mathbb A}_f)$. It is easy to see that $w \mapsto \sigma_r(w)$ is a $G_n({\mathbb A}_f)$-equivariant, $\sigma$-linear isomorphism from $W(\Pi,\psi)$ onto $W(\Pi^{\sigma},\psi)$. For $r=0$ this is nothing
but the previous action we had considered. We can relate the two actions by pulling out a central character:
\begin{equation}
\label{eqn:r-action} 
\sigma_r(w) = 
\sigma(\omega_{\Pi}(t_{\sigma}^r))\sigma_0(w) = 
\left(\frac{\sigma(\mathcal{G}(\omega_{\Pi_f}))}
{\mathcal{G}(\omega_{\Pi_f^{\sigma}})}\right)^r 
\sigma_0(w).
\end{equation}
If $w_0 \in W(\Pi)$ is the normalized new vector that is fixed by $\sigma_0$, for all $\sigma \in 
{\rm Gal}({\mathbb C}/{\mathbb Q}(\Pi))$ then the vector 
$$
w_r := \mathcal{G}(\omega_{\Pi_f})^{-r}w_0
$$
is fixed by all such $\sigma_r$. Hence the ${\mathbb Q}(\Pi)$-span of the 
$G_n({\mathbb A}_f)$-orbit of $w_r$ is the rational structure for this new action; we denote this rational 
structure by $W(\Pi_f)_r$. We have 
$$
W(\Pi_f)_r = \mathcal{G}(\omega_{\Pi_f})^{-r} \, W(\Pi_f)_0.
$$
The comparison map $\mathcal{F}_{\Pi_f,\epsilon,[\Pi]_{\infty}} : W(\Pi_f) \to H(\Pi)$ is the same map as before.  
(For brevity, we abbreviate $H^{b_n}(\mathfrak{g}_{\infty},K_{\infty}^0, V_{\Pi}\otimes M_{\mu}^{\vee})(\epsilon)$
as $H(\Pi)$.) The normalization of this map is different, and we define a period $p^{\epsilon}_r(\Pi)$ by the 
requirement that the normalized map 
$$
\mathcal{F}^r = p^{\epsilon}_r(\Pi)^{-1} \mathcal{F}
$$
maps the rational structure $W(\Pi)_r$ into the rational structure $H(\Pi)_0$; the latter being as before. 
As in \cite[Definition/Proposition 3.3]{raghuram-shahidi-imrn} one can give this definition in an ${\rm Aut}({\mathbb C})$-equivariant manner. For these periods $p_r^{\epsilon}(\Pi)$, the main theorem of \cite{raghuram-shahidi-imrn} 
looks like: 
$$
\sigma\left(\frac
{p_r^{\epsilon \cdot \epsilon_{\xi}}(\Pi_f\otimes\xi_f)}
{\mathcal{G}(\xi_f)^{n(n-1)/2-nr}\,p_r^{\epsilon}(\Pi_f) }\right) 
=
\left(\frac
{p_r^{\epsilon^{\sigma}\cdot\epsilon_{\xi^{\sigma}}}(\Pi_f^{\sigma}\otimes\xi_f^{\sigma})}
{\mathcal{G}(\xi_f^{\sigma})^{n(n-1)/2-nr}\,p^{\epsilon^{\sigma}}(\Pi_f^{\sigma})}\right)
$$
for any algebraic Hecke character $\xi$ of ${\mathbb Q}$. 

It is tempting to stop at this moment and observe that if $n$ is even, and we put $r = (n-2)/2$, then 
the periods $p^{\epsilon}_{(n-2)/2}(\Pi)$ have the same behaviour, upon twisting by Dirichlet characters, 
as the motivic periods of Deligne; the latter being known by Blasius \cite{blasius2} or Panchishkin 
\cite{panchishkin}. However, it is not clear at the moment if $p^{\epsilon}_{(n-2)/2}(\Pi)$ indeed captures
the possibly transcendental part of a critical value of the standard $L$-function of $\Pi$. 
We return to this theme about twisting in Section~\ref{sec:twisted}. We also formulate Conjecture~\ref{con:period-relations} describing a relation between 
the periods of the type $p^{\epsilon}_0(\Pi)$ and Deligne's motivic periods. 

Using the action $\sigma_r$, and the corresponding periods $p_r^{\epsilon}(\Pi)$, the main identity 
of Theorem~\ref{thm:main-identity} looks like: 
$$
\frac{L_f(1/2, \Pi \times \Sigma)}
{p_r^{\epsilon}(\Pi)\, p_r^{\eta}(\Sigma)\, p_{\infty}(\mu,\lambda)} \ = \  
\frac{\prod_{v \in S_{\Sigma}} L_v(1/2, \Pi_v \times \Sigma_v)}
{{\rm vol}(\Sigma)\, \prod_{v \notin S_{\Sigma}} c_{\Pi_v}}\, 
\langle \vartheta^r_{\Sigma,\eta}, \vartheta^r_{\Pi,\epsilon}\rangle_{\mathcal{C}(R_f)},
$$
with the classes defined as 
$$
\vartheta^r_{\Pi,\epsilon} = \mathcal{F}^r_{\Pi_f,\epsilon,[\Pi_{\infty}]}(w_{\Pi_f}),\ \ {\rm and}\ \ 
\vartheta^r_{\Sigma,\eta} = \mathcal{F}^r_{\Sigma_f,\eta,[\Sigma_{\infty}]}(w_{\Sigma_f}),
$$
where the global vectors $w_{\Pi_f}$ and $w_{\Sigma_f}$ are the same vectors as in \ref{subsec:whittaker}.
The action of $\sigma$ on these classes can be read off using (\ref{eqn:r-action}) and 
Proposition~\ref{prop:galois-classes}. In terms of the periods for $\sigma_r$, 
Theorem~\ref{thm:rankin-selberg} on the central critical value now looks like:
{\small
$$
\sigma\left(\frac{L_f(1/2,\Pi \times \Sigma)}
{p_r^{\epsilon}(\Pi) \, p_r^{\eta}(\Sigma) \,
\mathcal{G}(\omega_{\Pi_f})^r \, \mathcal{G}(\omega_{\Sigma_f})^{r+1}\, 
p_{\infty}(\mu,\lambda)}\right) = 
\frac{L_f(1/2, \Pi^{\sigma} \times \Sigma^{\sigma})}
{p_r^{\epsilon}(\Pi^{\sigma}) \, p_r^{\eta}(\Sigma^{\sigma}) \, 
\mathcal{G}(\omega_{\Pi_f^{\sigma}})^r \, \mathcal{G}(\omega_{\Sigma_f^{\sigma}})^{r+1} \, 
p_{\infty}(\mu,\lambda)}.
$$}

The moral of this section is an obvious one that one might have some freedom in defining periods, and 
proving relations amongst such periods, however, the $L$-functions are far more rigid; in the sense that the relations
between $L$-values are more rigid than period relations.

\section{Twisted $L$-functions}
\label{sec:twisted}

Given a cuspidal representation $\Pi$ of $G_n({\mathbb A})$, and a Dirichlet character $\chi$, 
it is often of interest to know the behaviour of the critical values of $L(s, \Pi\otimes\chi)$ when 
we fix the critical point and the representation $\Pi$ and let the character $\chi$ vary. One application
of such a question is toward $p$-adic $L$-functions. 

\subsection{A conjecture of Blasius and Panchishkin}

We now briefly review a conjecture independently due to Blasius \cite[Conjecture L.9.8]{blasius2} and 
Panchishkin \cite[Conjecture 2.3]{panchishkin} about twisted $L$-values. 
Let $\Pi$ be a regular algebraic cuspidal representation of ${\rm GL}_n({\mathbb A})$.
We define $\eta(\Pi) \in \{\pm 1\}$ by 
$$
\eta(\Pi) = {\rm Tr}(\tau(\Pi_{\infty})(j)),
$$
where $\tau(\Pi_{\infty})$ is the Langlands parameter of the representation $\Pi_{\infty}$, which, we recall, is an 
$n$-dimensional semisimple representation of the Weil group $W_{\mathbb R} = {\mathbb C}^* \cup j{\mathbb C}^*$ 
of ${\mathbb R}$. Define $d^{\pm}(\Pi) \in {\mathbb Z}$ by 
$$
d^{\pm}(\Pi) = \left\{\begin{array}{ll}
n/2 & \mbox{if $n$ is even,} \\
(n \pm \eta(\Pi))/2 & \mbox{if $n$ is odd.}\end{array}\right.
$$

\begin{con}
\label{con:blasius}
Let $\Pi$ be a regular algebraic cuspidal representation of ${\rm GL}_n({\mathbb A}_{\mathbb Q})$.
Let $\chi$ be an even Dirichlet character, which is thought of as a Hecke character. 
Note that both $L_f(s, \Pi)$ and $L_f(s, \Pi\otimes\chi)$ have 
the same set of critical points. Let $m$ be such a common critical point; $m \in (n-1)/2 + {\mathbb Z}$.
We have 
$$
L_f(m, \Pi \otimes \chi) \sim_{{\mathbb Q}(\Pi,\chi)} 
\mathcal{G}(\chi_f)^{d^{\pm}(\Pi \otimes |\!|\ |\!|^{(n-1)/2})}
L_f(m, \Pi),
$$
where ${\mathbb Q}(\Pi,\chi)$ denotes the number field generated by the values of the Dirichlet character $\chi$ 
and ${\mathbb Q}(\Pi)$; the sign $\pm$ is $(-1)^{m-(n-1)/2}$;  and 
$d^{\pm}(\Pi\otimes |\!|\ |\!|^{(n-1)/2})$ is 
as defined above. 
\end{con}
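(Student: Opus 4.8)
The plan is to prove Conjecture~\ref{con:blasius} in the range accessible to the methods of this paper, and to be explicit that the general statement is \emph{not} addressed: granting the conjectural motives $M(\Pi)$, the full conjecture is equivalent to Deligne's conjecture together with the Blasius--Panchishkin computation of the behaviour of Deligne's periods under twisting by Artin motives, and it is precisely this circle of ideas that motivates Conjecture~\ref{con:period-relations}. The accessible case is the following: the standard $L$-function in Conjecture~\ref{con:blasius} is realized as a Rankin--Selberg $L$-function $L_f(s,\Pi\times\Sigma)$, with $\Pi$ a regular algebraic cuspidal representation of ${\rm GL}_n({\mathbb A})$ and $\Sigma$ one of ${\rm GL}_{n-1}({\mathbb A})$ satisfying the hypotheses of Theorem~\ref{thm:rankin-selberg}, so that the ambient ${\rm GL}_N$ has $N = n(n-1)$; the critical point is the centre, $m = 1/2$; and the twisting character $\chi = \xi$ is even. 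Because $N = n(n-1)$ is even, $d^{\pm}(\Pi\otimes|\!|\ |\!|^{(N-1)/2}) = N/2 = n(n-1)/2$ for both choices of sign, so in this case the conjecture asserts exactly
$$
L_f(1/2,(\Pi\otimes\xi)\times\Sigma)\ \sim_{{\mathbb Q}(\Pi,\Sigma,\xi)}\ \mathcal{G}(\xi_f)^{n(n-1)/2}\,L_f(1/2,\Pi\times\Sigma),
$$
which is Theorem~\ref{thm:twisted}. I now outline how that theorem is obtained.

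First I would check that Theorem~\ref{thm:rankin-selberg} applies to the twisted pair $(\Pi\otimes\xi,\Sigma)$ with the \emph{same} auxiliary data as for $(\Pi,\Sigma)$. Twisting a regular algebraic cuspidal representation by an even (hence finite-order) Dirichlet character leaves its infinitesimal character unchanged, so $\Pi\otimes\xi \in {\rm Coh}(G_n,\mu^{\vee})$ with the same pure weight $\mu$, and in particular ${\rm wt}(\mu)$ is unchanged; the critical points of $L_f(s,(\Pi\otimes\xi)\times\Sigma)$ coincide with those of $L_f(s,\Pi\times\Sigma)$, so $s = 1/2$ is still critical; and $\omega_{\Sigma}$ is untouched. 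A short computation with the recipe of Theorem~\ref{thm:main-identity} shows the canonical signs $(\epsilon,\eta)$ attached to $(\Pi\otimes\xi,\Sigma)$ agree with those attached to $(\Pi,\Sigma)$: the central character of $\Pi\otimes\xi$ at $-1$ is $\omega_{\Pi_{\infty}}(-1)\,\xi_{\infty}(-1)^{n} = \omega_{\Pi_{\infty}}(-1)$ since $\xi(-1)=1$. Hence Theorem~\ref{thm:rankin-selberg} gives the algebraicity statement for $(\Pi\otimes\xi,\Sigma)$ with the same $p^{\eta}(\Sigma)$, $\mathcal{G}(\omega_{\Sigma_f})$ and $p_{\infty}(\mu,\lambda)$; dividing the two statements and cancelling these common factors yields
$$
\frac{L_f(1/2,(\Pi\otimes\xi)\times\Sigma)}{L_f(1/2,\Pi\times\Sigma)}\ \sim_{{\mathbb Q}(\Pi,\Sigma,\xi)}\ \frac{p^{\epsilon}(\Pi\otimes\xi)}{p^{\epsilon}(\Pi)}.
$$

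Second, I would feed in the period relation (\ref{eqn:variation2}) of Raghuram--Shahidi. Since $\xi$ is even, $\epsilon_{\xi} = +$, so $\epsilon\cdot\epsilon_{\xi} = \epsilon$, and (\ref{eqn:variation2}) says that $p^{\epsilon}(\Pi_f\otimes\xi_f)\big/\bigl(\mathcal{G}(\xi_f)^{n(n-1)/2}\,p^{\epsilon}(\Pi_f)\bigr)$ is ${\rm Aut}({\mathbb C})$-equivariant, hence lies in ${\mathbb Q}(\Pi,\xi)$; therefore $p^{\epsilon}(\Pi\otimes\xi) \sim_{{\mathbb Q}(\Pi,\xi)} \mathcal{G}(\xi_f)^{n(n-1)/2}\,p^{\epsilon}(\Pi)$, and substituting into the displayed ratio gives the $\sim$-form of Theorem~\ref{thm:twisted}, hence Conjecture~\ref{con:blasius} in the case at hand. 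Third, for the ${\rm Aut}({\mathbb C})$-equivariant refinement — valid once $L_f(1/2,\Pi\times\Sigma) \neq 0$ — I would rerun the same computation $\sigma$-equivariantly: apply $\sigma$ to the ratio, use the $\sigma$-equivariant form of Theorem~\ref{thm:rankin-selberg} for both pairs (the Gauss sums $\mathcal{G}(\omega_{\Sigma_f})$ cancel directly, and the factors $p_{\infty}(\mu,\lambda)$, which depend only on the weights, are literally equal on both sides), together with (\ref{eqn:variation2}) for the period ratio; what remains is exactly the second displayed identity of Theorem~\ref{thm:twisted}.

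The substance of the argument is not this Gauss-sum bookkeeping but everything upstream of it: Theorem~\ref{thm:rankin-selberg}, and hence Hypothesis~\ref{hypo:nonvanishing} on the nonvanishing of the archimedean pairing $\langle[\Pi_{\infty}],[\Sigma_{\infty}]\rangle$, is invoked for both the original and the twisted pair, so the whole result is conditional on that hypothesis. The one genuinely new verification demanded here — and the step I would be most careful about — is the stability of the canonical signs and of the purity and ${\rm Coh}$-conditions under the twist $\Pi \mapsto \Pi\otimes\xi$, since without it one cannot legitimately quote Theorem~\ref{thm:rankin-selberg} for the twisted pair. Finally I would emphasize that the general Conjecture~\ref{con:blasius} — arbitrary $\Pi$ on ${\rm GL}_n$, arbitrary critical $m$, and in particular the genuinely asymmetric regime $d^{+} \neq d^{-}$ when $n$ is odd — lies beyond the present technique, since it would require both algebraicity of non-central critical values and a finer decomposition of the relevant periods, and is properly viewed through Deligne's conjecture and Conjecture~\ref{con:period-relations}.
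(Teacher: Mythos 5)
Your reading is correct: the statement is a conjecture that the paper does not prove in general, and the only case it establishes is exactly the one you isolate — the central value of a Rankin--Selberg $L$-function viewed on ${\rm GL}_{n(n-1)}$, where $d^{\pm}=n(n-1)/2$ and the assertion reduces to Theorem~\ref{thm:twisted} (made explicit in the paper's ${\rm GL}_6$ corollary). Your derivation of that case — applying Theorem~\ref{thm:rankin-selberg} to both $(\Pi,\Sigma)$ and $(\Pi\otimes\xi,\Sigma)$ with the same $(\mu,\lambda)$, signs, $\mathcal{G}(\omega_{\Sigma_f})$ and $p_{\infty}(\mu,\lambda)$, and then invoking the period relation (\ref{eqn:variation2}) — is precisely the paper's proof of Theorem~\ref{thm:twisted}, including its conditionality on Hypothesis~\ref{hypo:nonvanishing}.
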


We note that if $n$ is even, then the conjecture simplifies to 
$$
L_f(m, \Pi \otimes \chi) \sim_{{\mathbb Q}(\Pi,\chi)} \mathcal{G}(\chi_f)^{n/2} L_f(m, \Pi),
$$

\subsection{Proof of Theorem~\ref{thm:twisted}}
\begin{proof}
We now prove Theorem~\ref{thm:twisted} about the behaviour of central critical value of Rankin--Selberg 
$L$-functions for ${\rm GL}_n \times {\rm GL}_{n-1}$ upon twisting by even Dirichlet characters. 
We go back to earlier notation. Note that Theorem~\ref{thm:rankin-selberg} implies that
$$
L_f(1/2,\Pi \times \Sigma) \ 
\sim_{{\mathbb Q}(\Pi, \Sigma)}\ 
p^{\epsilon}(\Pi) 
p^{\eta}(\Sigma)
\mathcal{G}(\omega_{\Sigma_f})p_{\infty}(\mu,\lambda)
$$
Let $\xi$ be an even Dirichlet character, then the pair $(\Pi\otimes\xi, \Sigma)$ also satisfy the hypotheses of 
Theorem~\ref{thm:rankin-selberg}, with the same pair of highest weights $(\mu,\lambda)$, 
since $\xi_{\infty}$ is trivial. Hence,
$$
L_f(1/2, (\Pi\otimes\xi) \times \Sigma) \ 
\sim_{{\mathbb Q}(\Pi, \Sigma, \xi)}\ 
p^{\epsilon}(\Pi \otimes \xi) 
p^{\eta}(\Sigma)
\mathcal{G}(\omega_{\Sigma_f})p_{\infty}(\mu,\lambda).
$$
We invoke \cite[Theorem 4.1]{raghuram-shahidi-imrn} as rewritten in (\ref{eqn:variation2}) 
to get 
$$
p^{\epsilon}(\Pi\otimes\xi) \sim_{{\mathbb Q}(\Pi, \xi)}
\mathcal{G}(\xi_f)^{n(n-1)/2}p^{\epsilon}(\Pi).
$$
Putting the above together gives
$$
L_f(1/2, (\Pi\otimes\xi) \times \Sigma) \ 
\sim_{{\mathbb Q}(\Pi, \Sigma, \xi)}\ 
\mathcal{G}(\xi_f)^{n(n-1)/2} L_f(1/2,\Pi \times \Sigma).
$$
\end{proof}

\subsection{Some remarks}

\subsubsection{}
Note that in the proof of Theorem~\ref{thm:twisted}, we could have absorbed the twisting character $\xi$ into $\Sigma$ since 
$$
L_f(s,(\Pi\otimes\xi) \times \Sigma) = L_f(s,\Pi \times (\Sigma\otimes\xi)).
$$
If we started with twisting $\Sigma$ by $\xi$, then we would only get $\mathcal{G}(\xi_f)^{(n-1)(n-2)/2}$ by 
applying (\ref{eqn:variation2}) to $p^{\eta}(\Sigma\otimes\xi)$. 
However, there is also the term involving the Gauss sum of $\omega_{\Sigma}$, and since the 
central character transforms as 
$\omega_{\Sigma\otimes\xi} = \xi^{n-1}\omega_{\Sigma}$, 
from \cite[Lemma 8]{shimura1} we have
$$
\mathcal{G}(\xi_f^{n-1}\omega_{\Sigma_f}) \sim_{{\mathbb Q}(\omega_{\Sigma},\xi)} 
\mathcal{G}(\xi_f)^{n-1}\mathcal{G}(\omega_{\Sigma_f}),
$$
i.e., we get the same net contribution of the Gauss sum.

\subsubsection{The Blasius-Panchishkin conjecture for some cusp forms on ${\rm GL}_6$}
We record that Theorem~\ref{thm:twisted} implies Conjecture~\ref{con:blasius} for certain cuspidal 
automorphic representations 
of ${\rm GL}_6({\mathbb A})$. 

\begin{cor}
Let $\Pi$ (resp., $\Sigma$) be a regular algebraic representation of ${\rm GL}_3({\mathbb A})$ 
(resp., ${\rm GL}_2({\mathbb A})$). Let $\Xi= \Pi \boxtimes \Sigma$ be the automorphic representation of ${\rm GL}_6({\mathbb A})$ which is the Kim-Shahidi transfer 
(for the $L$-homomorphism ${\rm GL}_2 \times {\rm GL}_3 \to {\rm GL}_6$
given by tensor product) of the pair $(\Pi,\Sigma)$. 
Assume that $\Xi$ is regular and cuspidal (it is necessarily algebraic), and that 
$s = 1/2$ is critical for $L_f(s, \Xi)$. Then for any 
any even Dirichlet character $\xi$ we have 
$$
L_f(1/2, \Xi \otimes \xi) \ \sim_{{\mathbb Q}(\Xi,\xi)} \ \mathcal{G}(\xi_f)^3L_f(1/2, \Xi).
$$
\end{cor}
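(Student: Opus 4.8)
The plan is to deduce the corollary directly from Theorem~\ref{thm:twisted} by taking $n=3$, $\Pi$ the given ${\rm GL}_3$-representation, and $\Sigma$ the given ${\rm GL}_2$-representation. First I would observe that with $n=3$ the exponent $n(n-1)/2$ in Theorem~\ref{thm:twisted} equals $3$, so the theorem gives
$$
L_f(1/2, (\Pi\otimes\xi)\times\Sigma) \sim_{{\mathbb Q}(\Pi,\Sigma,\xi)} \mathcal{G}(\xi_f)^3\, L_f(1/2,\Pi\times\Sigma).
$$
Next I would identify the Rankin--Selberg $L$-function with the standard $L$-function of the transfer: by the defining property of the Kim--Shahidi functorial lift for the tensor product ${\rm GL}_2\times{\rm GL}_3\to{\rm GL}_6$, we have $L_f(s,\Pi\times\Sigma)=L_f(s,\Xi)$, and likewise $L_f(s,(\Pi\otimes\xi)\times\Sigma)=L_f(s,(\Pi\otimes\xi)\boxtimes\Sigma)=L_f(s,\Xi\otimes\xi)$, the last equality because twisting one factor of a tensor-product lift by a Hecke character is the same as twisting the lift (this is a property of the local Langlands correspondence, already invoked in Proposition~\ref{prop:galois-localvalues}). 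Substituting these identifications into the displayed relation yields
$$
L_f(1/2,\Xi\otimes\xi)\sim_{{\mathbb Q}(\Pi,\Sigma,\xi)} \mathcal{G}(\xi_f)^3\, L_f(1/2,\Xi).
$$
Finally I would note that ${\mathbb Q}(\Xi)\subseteq{\mathbb Q}(\Pi,\Sigma)$ since $\Xi$ is a functorial transfer of the pair $(\Pi,\Sigma)$ and functorial transfers are ${\rm Aut}({\mathbb C})$-equivariant (so its rationality field is contained in the compositum), hence ${\mathbb Q}(\Xi,\xi)\subseteq{\mathbb Q}(\Pi,\Sigma,\xi)$, which allows us to weaken $\sim_{{\mathbb Q}(\Pi,\Sigma,\xi)}$ to $\sim_{{\mathbb Q}(\Xi,\xi)}$ as in the statement; one should also briefly check the hypotheses of Theorem~\ref{thm:rankin-selberg} hold for $(\Pi,\Sigma)$, namely that $\Xi$ being regular algebraic forces the weights of $\Pi$ and $\Sigma$ to interlace appropriately so that $\mu^\vee\succ\lambda$, and that criticality of $s=1/2$ for $L_f(s,\Xi)$ is exactly criticality for $L_f(s,\Pi\times\Sigma)$ by the $L$-function identity above.

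The main obstacle, such as it is, is bookkeeping rather than substance: one must be careful that the hypothesis $\mu^\vee\succ\lambda$ in Theorem~\ref{thm:rankin-selberg} is actually satisfied (or can be arranged) for the pair $(\Pi,\Sigma)$ underlying a regular cuspidal $\Xi$, since the assumption that $\Xi=\Pi\boxtimes\Sigma$ is regular algebraic cuspidal constrains the infinitesimal characters of $\Pi_\infty$ and $\Sigma_\infty$; I would spell out that the purity and regularity of $\Xi$ translate into the interlacing condition on the highest weights $\mu$ of $\Pi$ and $\lambda$ of $\Sigma$. Everything else — the matching of $L$-functions, the equivariance of the transfer, the arithmetic of Gauss sums — is either immediate from the cited results or has already appeared in the paper, so the corollary follows with essentially no new work beyond this verification.
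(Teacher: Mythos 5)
Your proposal is correct and follows essentially the same route as the paper: the paper's own proof simply observes that $L(s,\Xi\otimes\xi)=L(s,(\Pi\otimes\xi)\times\Sigma)$ and applies Theorem~\ref{thm:twisted} with $n=3$ (so the exponent $n(n-1)/2=3$), leaving the remaining details to the reader. Your additional care about the containment ${\mathbb Q}(\Xi,\xi)\subseteq{\mathbb Q}(\Pi,\Sigma,\xi)$ and the verification that regularity of $\Xi$ yields the interlacing condition $\mu^{\vee}\succ\lambda$ fills in exactly those omitted details.
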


\begin{proof}
The standard $L$-function $L(s, \Xi \otimes \xi)$  is nothing but the Rankin--Selberg $L$-function 
$L(s, \Pi \otimes \xi \times \Sigma)$. We leave the rest of the details to the reader.
\end{proof}

Note that since Kasten and Schmidt \cite{kasten-schmidt} have recently proved Hypothesis~\ref{hypo:nonvanishing}
in the situation of ${\rm GL}_3 \times {\rm GL}_2$, the above {\it corollary is therefore true unconditionally.} 
We also note that using the cuspidality criterion of Ramakrishnan-Wang \cite{ramakrishnan-wang}, and by taking 
$\Pi$ and $\Sigma$ to be regular with parameters unrelated to each other, one can see that the set of cuspidal
representations $\Xi$ to which the corollary applies is a nonempty set! We mention in passing that 
Qingyu Wu \cite{wu} has recently studied the image of this transfer.

\section{Odd symmetric power $L$-functions}
\label{sec:sym-357}

Let $\varphi$ be a primitive holomorphic cusp form on the upper half plane of weight $k$, for 
$\Gamma_0(N)$, with nebentypus character $\omega$. We denote this as $\varphi \in S_k(N,\omega)_{\rm prim}$. 
For any integer $r \geq 1$, consider the  
$r$-th symmetric power $L$-function $L_f(s, {\rm Sym}^r \varphi, \xi)$ attached to $\varphi$, twisted by a 
Dirichlet character $\xi$. In this section we prove Theorem~\ref{thm:sym-357} which 
gives an algebraicity theorem for certain critical values of such $L$-functions when $r$ is an odd integer
$\leq 7$.

\subsection{Some preliminaries}

\subsubsection{Symmetric power $L$-functions}
We will work with the $L$-function $L_f(s, {\rm Sym}^r \varphi, \xi)$ in the automorphic context, toward which we let 
$\pi(\varphi)$ be the cuspidal automorphic representation of ${\rm GL}_2({\mathbb A})$ attached to $\varphi$. 
For any integer $r \geq 1$, Langlands' functoriality predicts the existence of an isobaric automorphic 
representation ${\rm Sym}^r(\pi(\varphi))$ of ${\rm GL}_{r+1}({\mathbb A})$, which 
is known to exist for $r \leq 4$ by the work of Gelbart and Jacquet \cite{gelbart-jacquet},  
Kim and Shahidi \cite{kim-shahidi-annals}, and Kim \cite{kim}. If $L(s, {\rm Sym}^r(\pi(\varphi)))$ 
denotes the standard $L$-function of ${\rm Sym}^r(\pi(\varphi))$, then we have 
$$
L_f(s, {\rm Sym}^r \varphi, \xi) = 
L_f(s - r(k-1)/2, {\rm Sym}^r(\pi(\varphi)) \otimes \xi).
$$

For $r \geq 5$, Langlands' functoriality is not known for the $r$-th symmetric power, however, 
by the work of Kim and Shahidi \cite{kim-shahidi-duke}, 
for $5 \leq r \leq 9$ one does have results about the analytic properties of these $L$-functions. 
Let $S$ be any finite set of places containing archimedean and all ramified places for $\pi(\varphi)$, and 
define the partial $L$-functions $L^S(s, \pi(\varphi), {\rm Sym}^r \otimes\xi)$ as in 
\cite[\S 4]{kim-shahidi-duke}. From \cite[Proposition 4.2]{kim-shahidi-duke} and 
\cite[Proposition 4.5]{kim-shahidi-duke} we have 
\begin{enumerate}
\item $L^S(s, \pi(\varphi), {\rm Sym}^5 \otimes\xi)$ is holomorphic and nonzero in ${\rm Re}(s) \geq 1$;  
\item $L^S(s, \pi(\varphi), {\rm Sym}^7 \otimes\xi)$ is holomorphic and nonzero in ${\rm Re}(s) \geq 1$. 
\end{enumerate}
For $v \in S$, one defines the local factors $L(s, \pi(\varphi)_v, {\rm Sym}^r \otimes \xi_v)$ via 
the local Langlands correspondence. After completing the partial $L$-functions with these local factors, 
one gets that both 
$L_f(s, \pi(\varphi), {\rm Sym}^5 \otimes\xi)$  and $L_f(s, \pi(\varphi), {\rm Sym}^7 \otimes\xi)$ are holomorphic
and nonzero in ${\rm Re}(s) \geq 1$. By abuse of notation, we write 
$$
L_f(s, \pi(\varphi), {\rm Sym}^5 \otimes\xi)  = L_f(s, {\rm Sym}^5(\pi(\varphi))\otimes\xi),
$$ 
and so also for the seventh symmetric power.

\subsubsection{Decomposition of certain Rankin-Selberg $L$-functions}

\begin{lemma}
\label{lem:decomposition}
Let $\sigma$ be a two dimensional representation of some group. Then for $n \geq 2$
$$
{\rm Sym}^n(\sigma) \otimes {\rm Sym}^{n-1}(\sigma) \simeq 
{\rm Sym}^{2n-1}(\sigma) \oplus ({\rm Sym}^{2n-3}(\sigma)\otimes {\rm det}(\sigma))
\oplus \cdots \oplus (\sigma\otimes {\rm det}(\sigma)^{n-1})
$$
\end{lemma}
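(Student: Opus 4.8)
The plan is to decompose the tensor product ${\rm Sym}^n(\sigma) \otimes {\rm Sym}^{n-1}(\sigma)$ using the classical Clebsch--Gordan formula for ${\rm SL}_2$-representations. Recall that if $V$ is the standard two-dimensional representation, then ${\rm Sym}^a(V) \otimes {\rm Sym}^b(V) \simeq \bigoplus_{j=0}^{\min(a,b)} {\rm Sym}^{a+b-2j}(V)$ as representations of ${\rm SL}_2$ (or ${\rm GL}_2$, after keeping track of the determinant twist). First I would reduce to this statement: since $\sigma$ is a two-dimensional representation of an arbitrary group, one can regard the homomorphism defining $\sigma$ as factoring through ${\rm GL}_2$, and the symmetric power and tensor product operations are functorial, so any ${\rm GL}_2$-module decomposition pulls back to a decomposition of representations of the original group. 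Thus it suffices to establish the identity for $\sigma$ the standard representation of ${\rm GL}_2$.

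Next I would handle the determinant twists carefully, since that is where the bookkeeping lies. For ${\rm GL}_2$, the correct form of Clebsch--Gordan is
\[
{\rm Sym}^a(\sigma) \otimes {\rm Sym}^b(\sigma) \ \simeq \ \bigoplus_{j=0}^{\min(a,b)} {\rm Sym}^{a+b-2j}(\sigma) \otimes {\rm det}(\sigma)^{j},
\]
which can be checked on a maximal torus by comparing formal characters: writing the eigenvalues of $\sigma$ as $x, y$, the character of ${\rm Sym}^a(\sigma)$ is $\sum_{i=0}^a x^i y^{a-i}$, and multiplying the two such sums and collecting terms gives exactly the right-hand side, with $(xy)^j = {\rm det}(\sigma)^j$ appearing as claimed. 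Setting $a = n$, $b = n-1$, so $\min(a,b) = n-1$ and $a+b = 2n-1$, the sum runs over $j = 0, 1, \dots, n-1$ and produces the terms ${\rm Sym}^{2n-1}(\sigma)$, ${\rm Sym}^{2n-3}(\sigma) \otimes {\rm det}(\sigma)$, \dots, ${\rm Sym}^{1}(\sigma) \otimes {\rm det}(\sigma)^{n-1}$, which is precisely the asserted decomposition.

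I do not expect a serious obstacle here; the only thing to be careful about is the normalization of the determinant twists (one could alternatively prove it by induction on $n$, peeling off ${\rm Sym}^{2n-1}(\sigma)$ and using the projection formula, but the character computation is cleaner and self-contained). The one genuinely delicate point is the legitimacy of the reduction in the first step when the ambient group is completely arbitrary and $\sigma$ need not be semisimple: the character argument proves the identity at the level of the Grothendieck group / as virtual representations, and since all the representations in sight are actual representations built functorially from $\sigma$, one gets an honest isomorphism when $\sigma$ is the standard representation of ${\rm GL}_2$ over a field of characteristic zero, hence for general $\sigma$ by functoriality. In the application $\sigma$ will be a Galois or Weil-group parameter, so characteristic zero is automatic and this causes no trouble.
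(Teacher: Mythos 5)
Your proof is correct and is exactly the argument the paper has in mind: the paper's entire proof is the one-line citation ``This is Clebsch--Gordan for finite-dimensional representations of ${\rm GL}_2(\mathbb{C})$,'' and your character computation on the torus, with the determinant twists $(xy)^j = \det(\sigma)^j$ tracked explicitly, is just that statement spelled out. The reduction to the standard representation of ${\rm GL}_2$ by functoriality is the right way to handle an arbitrary two-dimensional $\sigma$, and since the Clebsch--Gordan decomposition holds as an honest isomorphism of algebraic ${\rm GL}_2$-representations (not merely virtually), your worry about semisimplicity evaporates.
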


\begin{proof}
This is Clebsch--Gordon for finite-dimensional representations of ${\rm GL}_2({\mathbb C})$. 
\end{proof}

\begin{cor}
\label{cor:factorize}
Let $\varphi \in S_k(N,\omega)_{\rm prim}$, and let $\pi(\varphi)$ be the associated
cuspidal automorphic representation of ${\rm GL}_2({\mathbb A}_{\mathbb Q})$. Let $n \leq 4$.  
For ${\rm Re}(s) \geq 1$ we have
$$
L_f(s, {\rm Sym}^n(\pi(\varphi)) \times {\rm Sym}^{n-1}(\pi(\varphi))) = 
\prod_{a=1}^n
L_f(s, {\rm Sym}^{2a-1}(\pi(\varphi))\otimes\omega_{\pi(\varphi)}^{n-a}).
$$
Assuming Langlands' functoriality, the above equality holds for all $n \geq 1$.
\end{cor}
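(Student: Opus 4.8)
The plan is to prove the asserted identity one Euler factor at a time and then pass to the half-plane ${\rm Re}(s)\ge 1$ by analytic continuation. Write $\pi=\pi(\varphi)$, and for a place $v$ of ${\mathbb Q}$ let $\tau_v\colon W'_{{\mathbb Q}_v}\to{\rm GL}_2({\mathbb C})$ denote the local Langlands parameter of $\pi_v$. The first point is that every $L$-factor occurring on either side of the claimed identity is, at each place $v$, the Weil--Deligne $L$-factor of an explicit composition of $\tau_v$ with a representation of ${\rm GL}_2({\mathbb C})$: for $r\le 4$ the symmetric power transfer ${\rm Sym}^r(\pi)$ exists (Gelbart--Jacquet \cite{gelbart-jacquet}, Kim--Shahidi \cite{kim-shahidi-annals}, Kim \cite{kim}) and is compatible with the local Langlands correspondence at \emph{every} place, so that the parameter of ${\rm Sym}^r(\pi)_v$ is ${\rm Sym}^r(\tau_v)$; for $r=5,7$ there is no transfer, but the local factors $L(s,{\rm Sym}^r(\pi)_v\otimes\eta_v)$ were defined, following Kim--Shahidi \cite{kim-shahidi-duke} (as recalled at the beginning of this section), precisely via the parameter ${\rm Sym}^r(\tau_v)$; and the Rankin--Selberg local factor $L(s,{\rm Sym}^n(\pi)_v\times{\rm Sym}^{n-1}(\pi)_v)$ equals the factor of the tensor product ${\rm Sym}^n(\tau_v)\otimes{\rm Sym}^{n-1}(\tau_v)$ by the compatibility of Rankin--Selberg theory with the local Langlands correspondence --- the identity $L(s,\pi_1\times\pi_2)=L(s,\pi_1\boxtimes\pi_2)$ already invoked in the proof of Proposition~\ref{prop:galois-localvalues}.

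Granting this, I would apply Lemma~\ref{lem:decomposition} with the group taken to be $W'_{{\mathbb Q}_v}$ and $\sigma=\tau_v$: for $n\ge 2$,
$$
{\rm Sym}^n(\tau_v)\otimes{\rm Sym}^{n-1}(\tau_v)\ \simeq\ \bigoplus_{a=1}^{n}{\rm Sym}^{2a-1}(\tau_v)\otimes(\det\tau_v)^{\,n-a}.
$$
As $L$-factors of Weil--Deligne representations are multiplicative over direct sums, and $\det\tau_v$ is the parameter of the central character $\omega_{\pi,v}$, this gives the local identity
$$
L(s,{\rm Sym}^n(\pi)_v\times{\rm Sym}^{n-1}(\pi)_v)=\prod_{a=1}^{n}L(s,{\rm Sym}^{2a-1}(\pi)_v\otimes\omega_{\pi,v}^{\,n-a})
$$
at every finite place $v$; the case $n=1$ is the tautology $L_f(s,\pi)=L_f(s,\pi)$. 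Taking the product over all finite $v$ yields the equality of the two finite $L$-functions as Euler products in some right half-plane ${\rm Re}(s)\gg 0$.

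Finally I would extend the equality to ${\rm Re}(s)\ge 1$ by analytic continuation. Both sides are meromorphic on ${\rm Re}(s)\ge 1$: the left-hand side because it is a Rankin--Selberg $L$-function of the isobaric automorphic representations ${\rm Sym}^n(\pi)$ of ${\rm GL}_{n+1}$ and ${\rm Sym}^{n-1}(\pi)$ of ${\rm GL}_n$ (here $n\le 4$), and the right-hand side because each of its factors is --- those with $2a-1\in\{1,3\}$ being standard automorphic $L$-functions, and those with $2a-1\in\{5,7\}$ being holomorphic and nonzero on ${\rm Re}(s)\ge 1$ by Kim--Shahidi \cite{kim-shahidi-duke}, as recalled above. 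Since the two meromorphic functions agree on ${\rm Re}(s)\gg 0$, they agree throughout ${\rm Re}(s)\ge 1$. If one assumes Langlands' functoriality, every ${\rm Sym}^r(\pi)$ is automorphic isobaric, all the $L$-functions in sight are meromorphic on ${\mathbb C}$, and the same place-by-place computation together with continuation gives the identity for all $n\ge 1$.

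The only substantive input --- and hence the main obstacle --- is the matching of local factors at the finitely many ramified places: that the transfers ${\rm Sym}^r(\pi)$ for $r\le 4$ are compatible with the local Langlands correspondence at bad places, and that the Rankin--Selberg local factor coincides with the tensor-product Weil--Deligne factor. Both facts are known and need only be cited carefully; everything else is the formal Clebsch--Gordon bookkeeping of Lemma~\ref{lem:decomposition} together with routine analytic continuation.
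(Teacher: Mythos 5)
Your proposal is correct and is essentially the argument the paper intends: the corollary is deduced from Lemma~\ref{lem:decomposition} by applying the Clebsch--Gordon decomposition to the local Langlands parameters place by place, using multiplicativity of local $L$-factors over direct sums (with the local factors for ${\rm Sym}^5$ and ${\rm Sym}^7$ defined via the local correspondence, as recalled in the preliminaries), and then continuing to ${\rm Re}(s)\ge 1$. Your explicit attention to compatibility of the transfers with local Langlands at ramified places and to the separate (tautological) case $n=1$ fills in details the paper leaves implicit.
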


\subsubsection{Symmetric power transfers have nontrivial cohomology}

To apply Theorem~\ref{thm:rankin-selberg} to get information about critical values for symmetric power $L$-functions, we need to know that the representation ${\rm Sym}^n(\pi(\varphi))$ has nontrivial cohomology. The following theorem
is essentially due to Labesse and Schwermer \cite{labesse-schwermer}. (See also \cite[\S5]{raghuram-shahidi-aims}.)

\begin{thm}
\label{thm:symmetric-cohomology}
Let $\varphi \in S_k(N,\omega)_{\rm prim}$ with $k \geq 2$. 
Let $n \geq 1$. Assume that
${\rm Sym}^n(\pi(\varphi))$ is a cuspidal representation of
${\rm GL}_{n+1}({\mathbb A})$. Let 
$$
\Pi = {\rm Sym}^n(\pi(\varphi)) \otimes \xi \otimes |\!|\ |\!|^s,
$$
where $\xi$ is a Hecke character such that 
$\xi_{\infty} = {\rm sgn}^{\epsilon}$, with $\epsilon \in \{0,1\}$, 
and $|\!| \ |\!|$ is the ad\`elic norm. We suppose that $s$ and $\epsilon$ 
satisfy: 
\begin{enumerate}
\item If $n$ is even, then let $s \in {\mathbb Z}$ and 
$\epsilon \equiv n(k-1)/2 \pmod{2}$. 
\item If $n$ is odd then, we let $s \in {\mathbb Z}$ if k is even,
and we let $s \in 1/2+{\mathbb Z}$ if k is odd. We impose no condition 
on $\epsilon$. 
\end{enumerate}
Then 
$\Pi \in {\rm Coh}(G_{n+1}, \mu^{\vee})$, 
where $\mu \in X^+_0(T_{n+1})$ is given by 
$$
\mu = \left(
\frac{n(k-2)}{2}+s, \frac{(n-2)(k-2)}{2}+s,\dots, \frac{-n(k-2)}{2}+s
\right) = (k-2)\rho_{n+1}+s, 
$$
with $\rho_{n+1}$ being half the sum of positive roots of ${\rm GL}_{n+1}$.
\end{thm}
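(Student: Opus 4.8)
The statement is entirely about the archimedean component, so the plan is to write down the Langlands parameter of $\Pi_{\infty}$ explicitly, identify $\Pi_{\infty}$ and its infinitesimal character, and then invoke the Labesse--Schwermer description of cohomological representations of ${\rm GL}_{n+1}({\mathbb R})$ together with Clozel's results \cite{clozel}. Since twisting by a Hecke character preserves cuspidality, $\Pi$ is cuspidal; so, once we know that $\Pi$ is regular and algebraic and we have produced the dominant pure weight $\mu$ for which $\Pi_{\infty}$ is cohomological with coefficients in $M_{\mu}^{\vee}$, the purity lemma \cite[Lemme de puret\'e 4.9]{clozel} and the standard cohomological machinery give $\Pi\in{\rm Coh}(G_{n+1},\mu^{\vee})$. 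The concrete steps are: (i) record the parameter of $\pi(\varphi)_{\infty}$; (ii) apply ${\rm Sym}^{n}$ and twist by $\xi_{\infty}\,|\!|\ |\!|^{s}$ to obtain $\Pi_{\infty}$; (iii) read off the infinitesimal character, verify algebraicity and regularity, and check that the conditions on $(s,\epsilon)$ in (1)--(2) are exactly what makes this work; (iv) solve for $\mu$ and verify $\mu\in X_{0}^{+}(T_{n+1})$.

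For (i): $\pi(\varphi)_{\infty}$ is the discrete series of ${\rm GL}_{2}({\mathbb R})$ attached to a weight $k\geq2$ form, with Langlands parameter $\phi_{\infty}={\rm Ind}_{W_{{\mathbb C}}}^{W_{{\mathbb R}}}\chi$, $\chi(z)=(z/\bar z)^{(k-1)/2}$. For (ii): by the very definition of the symmetric power transfer, ${\rm Sym}^{n}(\pi(\varphi))_{\infty}$ has parameter ${\rm Sym}^{n}\phi_{\infty}$, whose restriction to $W_{{\mathbb C}}$ is $\bigoplus_{a=0}^{n}(z/\bar z)^{(2a-n)(k-1)/2}$; over $W_{{\mathbb R}}$ this regroups into $\lfloor (n+1)/2\rfloor$ two-dimensional discrete-series parameters together with, when $n$ is even, one one-dimensional (sign) constituent whose parity a determinant computation pins down. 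Thus ${\rm Sym}^{n}(\pi(\varphi))_{\infty}$ is the full normalized induction from the standard parabolic of ${\rm GL}_{n+1}({\mathbb R})$ of type $(2,\dots,2)$ (resp.\ $(2,\dots,2,1)$), and $\Pi_{\infty}$ is obtained by tensoring each ${\rm GL}$-block with $|\det|^{s}$ and with ${\rm sgn}^{\epsilon}$.

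For (iii)--(iv): the infinitesimal character of $\Pi_{\infty}$ is the multiset $\{\tfrac{(n-2j)(k-1)}{2}+s:0\le j\le n\}=(k-1)\rho_{n+1}+s$, which is regular for every $k\geq2$ (in particular also when $k=2$, where $\mu$ degenerates to a scalar weight but the $\rho_{n+1}$-shift stays regular). Writing this as $\mu+\rho_{n+1}$ with $\mu$ integral is possible exactly when $\tfrac{n(k-2)}{2}+s\in{\mathbb Z}$ — successive coordinates of $\mu$ differ by the integer $k-2$ — and this is precisely the constraint that hypotheses (1) and (2) place on $s$ according to the parities of $n$ and $k$; hence $\Pi$ is regular and algebraic. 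When $n+1$ is even this is all that is needed, consistent with (2)'s silence about $\epsilon$; when $n+1$ is odd one must also arrange that the sign character carried by $\Pi_{\infty}$ on its ${\rm GL}_{1}$-block — which is ${\rm sgn}^{\epsilon}$ times the constituent computed in (ii) — agrees with the one dictated by $M_{\mu}^{\vee}$ (equivalently, that the central character of $\Pi_{\infty}\otimes M_{\mu}^{\vee}$ is consistent with nonvanishing $\pi_{0}(K_{n+1,\infty})$-isotypic cohomology), and unwinding this yields exactly the congruence $\epsilon\equiv n(k-1)/2\pmod 2$ of (1). With $\mu+\rho_{n+1}=(k-1)\rho_{n+1}+s$ we get $\mu=(k-2)\rho_{n+1}+s$, having the asserted coordinates; it is dominant since $k\geq2$, integral by the above, and pure with ${\rm wt}(\mu)=2s$ since $\mu_{i}+\mu_{n+2-i}=2s$, so $\mu\in X_{0}^{+}(T_{n+1})$. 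Putting everything together, $\Pi$ is regular algebraic cuspidal with $\Pi_{\infty}$ cohomological of coefficients $M_{\mu}^{\vee}$, whence, by Labesse--Schwermer \cite{labesse-schwermer} (see also \cite[\S5]{raghuram-shahidi-aims}), $\Pi\in{\rm Coh}(G_{n+1},\mu^{\vee})$.

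The main obstacle I expect is step (iii) in the case $n$ even: computing the one-dimensional sign constituent of the symmetric power of a discrete series, and then tracking all the signs at $-1$ (this constituent, the twist by ${\rm sgn}^{\epsilon}$, and the central character of $M_{\mu}^{\vee}$) so that the cohomological condition collapses to the clean congruence $\epsilon\equiv n(k-1)/2\pmod 2$. Everything else is a routine computation of infinitesimal characters together with appeals to known results.
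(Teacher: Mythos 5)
Your proposal is correct and is exactly the standard argument that the paper itself does not spell out but delegates to Labesse--Schwermer and to \cite[\S5]{raghuram-shahidi-aims}: compute the parameter of ${\rm Sym}^n(\pi(\varphi))_\infty$, read off the infinitesimal character $(k-1)\rho_{n+1}+s$, observe that integrality of $\mu=(k-2)\rho_{n+1}+s$ is precisely conditions (1)--(2) on $s$, and pin down the sign on the ${\rm GL}_1$-block when $n$ is even (your identification of that constituent as ${\rm sgn}^{n(k-1)/2}$ agrees with the decomposition ${\rm Sym}^n(I(\chi_{k-1}))=\epsilon^{n(k-1)/2}\oplus\bigoplus_{a=1}^{n/2}I(\chi_{2a(k-1)})$ that the paper uses later in \S5.3). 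All the checks (regularity for $k\geq 2$ including $k=2$, dominance, purity with ${\rm wt}(\mu)=2s$) are right, so there is nothing to correct.
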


\subsection{Proof of Theorem~\ref{thm:sym-357}}

As mentioned above, the proof of Theorem~\ref{thm:sym-357} is obtained by applying Theorem~\ref{thm:rankin-selberg}
when $\Pi$ and $\Sigma$ are two consecutive symmetric power transfers of the representation $\pi(\varphi)$. We have 
already commented that these representations, up to some minor twisting, are cohomological. We need to check that
the other hypotheses of Theorem~\ref{thm:rankin-selberg}, which concern the highest weights $\mu$ and $\lambda$,
also hold for these choices. In the following proposition we record the various choices to be made, which depend on the parities of $n$ and $k$. We also record the critical set for the Rankin--Selberg $L$-function at hand, and note that $s=1/2$ is critical in all the cases we consider. Lastly, we specify the signs 
$\epsilon$ and $\eta$ given by the recipe in Theorem~\ref{thm:main-identity} for the specific choice of representations
in each case. 

\begin{prop}
\label{prop:twisting}
Let $\varphi \in S_k(N,\omega)$ be a primitive cusp form, and $\pi(\varphi)$ the associated cuspidal automorphic 
representation of ${\rm GL}_2({\mathbb A})$. Let $\theta$ be any quadratic odd Dirichlet character. 
Let $\xi$ be any Dirichlet character. (We think of $\theta$ and $\xi$ as Hecke characters.)
\begin{enumerate}
\item $k$-even $\geq 4$, and $n$-even.
  \begin{itemize}
  \item $\Pi = {\rm Sym}^n(\pi(\varphi)) \otimes \theta^{n/2}$; 
        $\mu = (k-2)\rho_{n+1}$; $\epsilon = (-1)^{n(n+1)/2}$.
  \item $\Sigma = {\rm Sym}^{n-1}(\pi(\varphi))\otimes \xi \otimes |\!|\ |\!| $; 
        $\lambda = (k-2)\rho_n + 1$; $\eta = -\epsilon$.
  \item Critical set for $L_f(s,\Pi \times \Sigma) = 
        \{\frac{1-k}{2}, \frac{3-k}{2},\dots,\frac{1}{2},\dots,\frac{k-3}{2}\}$.
  \end{itemize}

\item $k$-even $\geq 4$ and $n$-odd.
  \begin{itemize}
  \item $\Pi = {\rm Sym}^n(\pi(\varphi))\otimes \xi \otimes |\!|\ |\!|$; 
        $\mu = (k-2)\rho_{n+1}+1$; $\epsilon = \eta$. 
  \item $\Sigma = {\rm Sym}^{n-1}(\pi(\varphi))\otimes \theta^{(n-1)/2}$; 
        $\lambda = (k-2)\rho_n$; $\eta = (-1)^{n(n-1)/2}$.
  \item Critical set for $L_f(s,\Pi \times \Sigma) = 
        \{\frac{1-k}{2}, \frac{3-k}{2},\dots,\frac{1}{2},\dots,\frac{k-3}{2}\}$.
  \end{itemize}

\item $k$-odd $\geq 3$ and $n$-even.
  \begin{itemize}
  \item $\Pi = {\rm Sym}^n(\pi(\varphi))$; 
        $\mu = (k-2)\rho_{n+1}$; $\epsilon = (-1)^{n(n+1)/2}$. 
  \item $\Sigma = {\rm Sym}^{n-1}(\pi(\varphi))\otimes \xi \otimes |\!|\ |\!|^{1/2}$; 
        $\lambda = (k-2)\rho_n + 1/2$; $\eta = -\epsilon$.
  \item Critical set for $L_f(s,\Pi \times \Sigma) = 
        \{\frac{2-k}{2}, \frac{4-k}{2},\dots,\frac{1}{2},\dots,\frac{k-2}{2}\}$.
  \end{itemize}

\item $k$-odd $\geq 3$ and $n$-odd.
  \begin{itemize}
  \item $\Pi = {\rm Sym}^n(\pi(\varphi))\otimes \xi \otimes |\!|\ |\!|^{1/2}$; 
        $\mu = (k-2)\rho_{n+1} + 1/2$; $\epsilon = \eta$. 
  \item $\Sigma = {\rm Sym}^{n-1}(\pi(\varphi))$; 
        $\lambda = (k-2)\rho_n$; $\eta = (-1)^{n(n-1)/2}$.
  \item Critical set for $L_f(s,\Pi \times \Sigma) = 
        \{\frac{2-k}{2}, \frac{4-k}{2},\dots,\frac{1}{2},\dots,\frac{k-2}{2}\}$.
  \end{itemize}
\end{enumerate}
\end{prop}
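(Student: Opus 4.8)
The proposition is a four-fold case check, and the plan is to run all four cases through the same three steps, flagging exactly where each parity hypothesis on $k$ and $n$ enters. First one pins down the cohomology: in each case $\Pi$ is ${\rm Sym}^n(\pi(\varphi))$ twisted by a power of the odd quadratic character $\theta$ (in the two $k$-even cases) or by a power of the norm $|\!|\ |\!|$, and $\Sigma$ is the analogous twist of ${\rm Sym}^{n-1}(\pi(\varphi))$, so one applies Theorem~\ref{thm:symmetric-cohomology} to $\Pi$ on $G_{n+1}$ and to $\Sigma$ on $G_n$. The exponent of $|\!|\ |\!|$ in the twist is precisely the parameter $s$ of that theorem (and it is put on whichever of $\Pi,\Sigma$ has even rank, the rank at which Theorem~\ref{thm:symmetric-cohomology} imposes no condition on the character at infinity), so the only checks are: (a) this exponent is an integer whenever the relevant rank is odd or $k$ is even, and a half-integer precisely when the relevant rank is even and $k$ is odd; and (b) when the relevant rank is odd, the sign at infinity of the twisting character satisfies the congruence $\epsilon\equiv n(k-1)/2\pmod 2$ of Theorem~\ref{thm:symmetric-cohomology} — which is exactly why we twist by $\theta^{n/2}$, resp.\ $\theta^{(n-1)/2}$, in the $k$-even cases (as $\theta_\infty={\rm sgn}$, these powers have the right parity, while for $k$ odd the trivial twist already works). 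Theorem~\ref{thm:symmetric-cohomology} then delivers the highest weights in the asserted form $\mu=(k-2)\rho_{n+1}+s$, $\lambda=(k-2)\rho_n+s'$; since $(k-2)\rho_m$ satisfies the purity relation with weight zero, the shift $(k-2)\rho_m+c$ is pure of weight $2c$, whence $\mu\in X^+_0(T_{n+1})$ and $\lambda\in X^+_0(T_n)$. (Here one uses the standing hypothesis, known for $n\le 4$ away from the classically understood exceptional forms, that the symmetric power transfers in play are cuspidal.)

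Next one verifies the interlacing and the criticality of $s=1/2$. Since $\mu$ is pure, $\mu^{\vee}=\mu-{\rm wt}(\mu)(1,\dots,1)$, so substituting the explicit $\mu$ and $\lambda$ turns $\mu^{\vee}\succ\lambda$ into a short list of inequalities, each of the shape $(k-2)/2\pm(\text{shift})\ge 0$; in every case the binding one is $(k-4)/2\ge 0$ (the $k$-even cases) or $(k-3)/2\ge 0$ (the $k$-odd cases), i.e.\ exactly the lower bound on $k$ required in Theorem~\ref{thm:sym-357}. For the critical set, the archimedean $L$-factor of $\Pi\times\Sigma$ depends only on $\Pi_\infty$ and $\Sigma_\infty$, hence only on $\mu$ and $\lambda$; feeding in the explicit weights and applying the standard recipe for the poles of a product of $\Gamma$-factors produces the intervals of (half-)integers displayed in each case, and $s=1/2$ falls inside — as it must, since the power of $|\!|\ |\!|$ in the twist ($1$ for $k$ even, $1/2$ for $k$ odd) was chosen precisely so that the relevant summand in the decomposition of Corollary~\ref{cor:factorize} has its functional-equation centre at $s=1/2$. (One could equally deduce that $s=1/2$ is critical straight from Corollary~\ref{cor:factorize} and the known critical points of odd symmetric power $L$-functions; the $\mu,\lambda$ computation has the advantage of yielding the full critical set.)

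Finally one nails down the signs. The recipe of Theorem~\ref{thm:main-identity}, applied to the pair $(G_{n+1},G_n)$, gives $\epsilon=(-1)^{n+1}\eta$, together with $\epsilon=\omega_{\Pi_\infty}(-1)(-1)^{{\rm wt}(\mu)/2}$ when $n$ is even and $\eta=\omega_{\Sigma_\infty}(-1)(-1)^{{\rm wt}(\lambda)/2}$ when $n$ is odd. In each of the four cases the member of the pair not carrying the norm twist — $\Pi$ when $n$ is even, $\Sigma$ when $n$ is odd — has ${\rm wt}=0$, so one only computes $\omega_{\Pi_\infty}(-1)$, resp.\ $\omega_{\Sigma_\infty}(-1)$. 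Using $\omega_{{\rm Sym}^m(\pi(\varphi))}=\omega_{\pi(\varphi)}^{m(m+1)/2}$, $\omega_{\pi(\varphi)_\infty}(-1)=(-1)^k$, the fact that twisting a representation of $G_N$ by $\chi$ multiplies its central character by $\chi^N$, and $\theta_\infty={\rm sgn}$, one gets $\omega_{\Pi_\infty}(-1)=(-1)^{n(n+1)/2}$ in the $n$-even cases and $\omega_{\Sigma_\infty}(-1)=(-1)^{n(n-1)/2}$ in the $n$-odd cases; the other sign then follows from $\epsilon=(-1)^{n+1}\eta$, which reads $\eta=-\epsilon$ for $n$ even and $\epsilon=\eta$ for $n$ odd, as asserted.

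None of the three steps is deep; the real work, and the main source of slips, is the parity bookkeeping — checking in each of the four cases that the single chosen twist simultaneously makes the highest weight integral and pure, preserves $\mu^{\vee}\succ\lambda$, and meets the congruence at infinity of Theorem~\ref{thm:symmetric-cohomology}. One must also keep three normalizations straight: Theorem~\ref{thm:symmetric-cohomology} is indexed by the rank minus one while Theorem~\ref{thm:main-identity} is indexed by the larger rank, and the point $s=1/2$ here lives in the automorphic normalization whereas the critical integer $m$ of Theorem~\ref{thm:sym-357} lives in the arithmetic normalization $L_f(s,{\rm Sym}^r\varphi,\xi)=L_f(s-r(k-1)/2,{\rm Sym}^r(\pi(\varphi))\otimes\xi)$, so the shift by $r(k-1)/2$ has to be tracked when matching the critical sets.
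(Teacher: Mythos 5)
Your proposal follows the paper's own (largely omitted) argument exactly: cohomologicality and the highest weights come from Theorem~\ref{thm:symmetric-cohomology}, the signs from the recipe of Theorem~\ref{thm:main-identity} applied to the pair $(G_{n+1},G_n)$ (so $\epsilon=(-1)^{n+1}\eta$), and the critical set from the archimedean $L$-factors, with $\mu^{\vee}\succ\lambda$ forcing $k\geq 4$ resp.\ $k\geq 3$. Your explicit central-character and purity computations, which the paper omits as routine, check out.
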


We add some comments to illuminate the various twistings and the assumptions on the weight $k$ in the above proposition. 
\begin{enumerate}

\item Twisting by $\xi$. To apply Corollary~\ref{cor:factorize} to get critical values of a certain odd symmetric 
power, one needs to know the critical values of smaller odd symmetric power $L$-functions {\it twisted} by certain 
characters. 

\item Twisting by a power of $\theta$. The presence of this odd Dirichlet character is dictated by the vagaries of 
Theorem~\ref{thm:symmetric-cohomology} in the case when both $k$ and $n$ are even. 

\item Twisting by $|\!|\ |\!|$ when $k$ is even. This is an artifice introduced so that we are really working 
with the critical point $s=3/2$ where all the L-functions at hand are nonvanishing. We need nonvanishing because 
to apply Corollary~\ref{cor:factorize} we need to invert all but one of the factors on the right hand side. We could
avoid this twist if we had a theorem about simultaneous nonvanishing of twisted $L$-functions at $s=1/2$. As of now, 
the best available theorem along these lines seems to be due to    
Chinta-Friedberg-Hoffstein \cite{chinta-friedberg-hoffstein}, but this is not able to handle the point $s = 1/2$.

\item Twisting by $|\!|\ |\!|^{1/2}$ when $k$ is odd. This is simply to ensure that we are working with an 
algebraic representation. This twist automatically takes care that we are dealing with $L$-functions at $s=1$ 
where they are nonvanishing \cite{jacquet-shalika-inv}. 

\item If $k$ is even (resp., odd) then we take $k \geq 4$ (resp., $k \geq 3$) so that the condition 
$\mu^{\vee} \succ \lambda $ is satisfied. In particular, that we do not say anything about the critical values of odd symmetric power 
$L$-functions of elliptic curves. We note that for $k=1$, none of the symmetric power
$L$-function have critical points! (See \cite{raghuram-shahidi-aims}.)
\end{enumerate}

\begin{proof}[Proof of Proposition~\ref{prop:twisting}]
In each case, one has $\Pi \in {\rm Coh}(G_{n+1}, \mu^{\vee})$ and 
$\Sigma \in {\rm Coh}(G_n,\lambda^{\vee})$ by Theorem~\ref{thm:symmetric-cohomology}. The signs $\epsilon$ and 
$\eta$ are given by Theorem~\ref{thm:main-identity}. The list of critical points is an easy exercise involving 
the $L$-factors at infinity: one can write down the Langlands parameters of the representations $\Pi_{\infty}$
and $\Sigma_{\infty}$, and then write down $L(s , \Pi_{\infty} \times \Sigma_{\infty})$. Now do the same with 
the dual representations, and the list follows in every case from the definition of a critical point. It should be kept in mind that $\Pi \times \Sigma$ is, via functoriality, a representation of ${\rm GL}_{n(n+1)}$, and 
$n(n+1)$ is even; the so-called motivic normalization dictates that one looks at critical points in 
$(n(n+1)-1)/2+{\mathbb Z} = 1/2 + {\mathbb Z}$. We omit the routine details. 
\end{proof}

For the proof of Theorem~\ref{thm:sym-357}
we start with the case when $k$ is even. Successively apply Theorem~\ref{thm:rankin-selberg} for the 
pairs of representations $({\rm Sym}^r(\pi(\varphi)), {\rm Sym}^{r-1}(\pi(\varphi)))$ for $r=1,2,3,4$, 
where the representations are taken with appropriate twisting characters as prescribed by Proposition~\ref{prop:twisting}. The proof repeatedly uses the period relations in 
(\ref{eqn:variation2}), and the fact (\cite[Lemma 8]{shimura1})
that for two Hecke characters $\chi_1$ and $\chi_2$ and $\sigma \in {\rm Aut}({\mathbb C})$ one has
$\sigma(\mathcal{G}(\chi_1\chi_2)/\mathcal{G}(\chi_1)\mathcal{G}(\chi_2)) = 
\mathcal{G}(\chi_1^{\sigma}\chi_2^{\sigma})/\mathcal{G}(\chi_1^{\sigma})\mathcal{G}(\chi_2^{\sigma}).$
We have the following:

\begin{equation}
L_f\left(\frac{3}{2}, \pi(\varphi) \otimes \xi\right) 
\sim
p^{-\xi(-1)}(\pi(\varphi)) \, \mathcal{G}(\xi) \, p_{\infty}((k-2)\rho_2+1).
\end{equation}

{\small
\begin{equation}
L_f\left(\frac{3}{2}, {\rm Sym}^3(\pi(\varphi)) \otimes \xi\right) 
\sim 
p^+({\rm Sym}^2(\pi(\varphi)))\frac{p^{\xi(-1)}(\pi(\varphi))}{p^{-\xi(-1)}(\pi(\varphi))} \, 
\mathcal{G}(\xi)^2 \, 
\frac{p_{\infty}((k-2)\rho_3, (k-2)\rho_2+1)}{p_{\infty}((k-2)\rho_2+1)}.
\end{equation}
}

{\small
\begin{equation}
L_f\left(\frac{3}{2}, {\rm Sym}^5(\pi(\varphi)) \otimes \xi\right) 
\sim 
\frac{p^{-\xi(-1)}({\rm Sym}^3(\pi(\varphi)))}{p^{\xi(-1)}(\pi(\varphi))} \, 
\frac{\mathcal{G}(\xi)^3}{\mathcal{G}(\omega)} \, 
\frac{p_{\infty}((k-2)\rho_4+1, (k-2)\rho_3)}{p_{\infty}((k-2)\rho_3, (k-2)\rho_2+1)}.
\end{equation}
}

{\Small
\begin{equation}
\label{eqn:p-phi-n}
L_f\left(\frac{3}{2}, {\rm Sym}^7(\pi(\varphi)) \otimes \xi\right) 
\sim 
\frac{p^+({\rm Sym}^4(\pi(\varphi)))}{p^+({\rm Sym}^2(\pi(\varphi)))}
\frac{p^{\xi(-1)}({\rm Sym}^3(\pi(\varphi)))}{p^{-\xi(-1)}({\rm Sym}^3(\pi(\varphi)))} \, 
\frac{\mathcal{G}(\xi)^4}{\mathcal{G}(\omega)^3}\, 
\frac{p_{\infty}((k-2)\rho_5, (k-2)\rho_4+1)}{p_{\infty}((k-2)\rho_4+1, (k-2)\rho_3)}.
\end{equation}
}

We omit the proof as it is an extended exercise in book-keeping. Similarly, when the weight 
$k$ is odd, we get the following:

\begin{equation}
L_f\left(1, \pi(\varphi) \otimes \xi\right) 
\sim
p^{\xi(-1)}(\pi(\varphi) \otimes |\!|\ |\!|^{1/2}) \, \mathcal{G}(\xi) \, p_{\infty}((k-2)\rho_2 + 1/2).
\end{equation}

{\Small
\begin{equation}
L_f\left(1, {\rm Sym}^3(\pi(\varphi)) \otimes \xi\right) 
\sim 
p^-({\rm Sym}^2(\pi(\varphi)))
\frac{p^{\xi(-1)}(\pi(\varphi)\otimes |\!|\ |\!|^{1/2})}{p^{-\xi(-1)}(\pi(\varphi)\otimes |\!|\ |\!|^{1/2})} \, 
\mathcal{G}(\xi)^2 \, 
\frac{p_{\infty}((k-2)\rho_3, (k-2)\rho_2+1/2)}{p_{\infty}((k-2)\rho_2+1/2)}.
\end{equation}
}

{\small
\begin{equation}
L_f\left(1, {\rm Sym}^5(\pi(\varphi)) \otimes \xi\right) 
\sim 
\frac{p^{-\xi(-1)}({\rm Sym}^3(\pi(\varphi))\otimes |\!|\ |\!|^{1/2})}
{p^{-\xi(-1)}(\pi(\varphi)\otimes |\!|\ |\!|^{1/2})} \,
\frac{\mathcal{G}(\xi)^3}{\mathcal{G}(\omega)} \, 
\frac{p_{\infty}((k-2)\rho_4+1/2, (k-2)\rho_3)}{p_{\infty}((k-2)\rho_3, (k-2)\rho_2+1/2)}.
\end{equation}
}

{\SMALL
\begin{equation}
L_f\left(1, {\rm Sym}^7(\pi(\varphi)) \otimes \xi\right) 
\sim 
\frac{p^+({\rm Sym}^4(\pi(\varphi)))}{p^-({\rm Sym}^2(\pi(\varphi)))}
\frac{p^{-\xi(-1)}({\rm Sym}^3(\pi(\varphi))\otimes |\!|\ |\!|^{1/2})}
     {p^{\xi(-1)}({\rm Sym}^3(\pi(\varphi))\otimes |\!|\ |\!|^{1/2})} \, 
\frac{\mathcal{G}(\xi)^4}{\mathcal{G}(\omega)^3}\, 
\frac{p_{\infty}((k-2)\rho_5, (k-2)\rho_4+1/2)}{p_{\infty}((k-2)\rho_4+1/2, (k-2)\rho_3)}.
\end{equation}
}
In all the above equations, by $\sim$ we mean up to algebraic quantities in an appropriate rationality field, namely 
the number field ${\mathbb Q}(\varphi, \xi)$. More generally, one can say that the quotient of the two sides is 
equivariant under ${\rm Aut}({\mathbb C})$.

We note that the complex number $p^{\epsilon}(\varphi,2n-1)$ in the statement of Theorem~\ref{thm:sym-357}
is a combination of periods attached to various symmetric power representations. 
For example, from (\ref{eqn:p-phi-n}), one has
$$
p^{\epsilon}(\varphi, 7) = 
\frac{p^+({\rm Sym}^4(\pi(\varphi)))}{p^+({\rm Sym}^2(\pi(\varphi)))}
\frac{p^{\epsilon}({\rm Sym}^3(\pi(\varphi)))}{p^{-\epsilon}({\rm Sym}^3(\pi(\varphi)))}
\mathcal{G}(\omega)^{-3}
$$
when the weight of $\varphi$ is even. By induction on $n$, it is possible to write down an expression for 
$p^{\epsilon}(\varphi,2n-1)$. Similarly, one can write down an expression for $p(m,k)$ in terms of $p_{\infty}(\mu,\lambda)$ for various weights $\mu$ and $\lambda$. 
We omit the tedious details.

\subsection{Twisted symmetric power $L$-functions}

\subsubsection{A special case of \cite[Conjecture 7.1]{raghuram-shahidi-aims}}

In an earlier paper with Shahidi \cite{raghuram-shahidi-aims}, we had formulated a conjecture 
about the behaviour of the special values of symmetric power $L$-functions upon twisting by Dirichlet 
characters. See \cite[Conjecture 7.1]{raghuram-shahidi-aims}. We note that Theorem~\ref{thm:sym-357} 
implies this conjecture for certain odd symmetric power $L$-functions. 

\begin{cor}
Let $\varphi$ and the critical point $m$ be as in Theorem~\ref{thm:sym-357}. Let $\xi$ be an even 
Dirichlet character which we identify with the corresponding Hecke character. For $n \leq 4$ we have
$$
L_f(m, {\rm Sym}^{2n-1}\varphi, \xi) \ \sim \ \mathcal{G}(\xi_f)^n L_f(m,  {\rm Sym}^{2n-1}\varphi),
$$
where, by $\sim$, we mean up to an element of the number field ${\mathbb Q}(\varphi, \xi)$. Moreover, 
the quotient $L_f(m, {\rm Sym}^{2n-1}\varphi, \xi)/(\mathcal{G}(\xi_f)^n L_f(m,  {\rm Sym}^{2n-1}\varphi))$
is ${\rm Aut}({\mathbb C})$-equivariant. 
\end{cor}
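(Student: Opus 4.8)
The plan is to deduce this twisting result directly from Theorem~\ref{thm:sym-357} by dividing the algebraicity statement for the twisted $L$-value $L_f(m, {\rm Sym}^{2n-1}\varphi, \xi)$ by the corresponding statement for the untwisted value $L_f(m, {\rm Sym}^{2n-1}\varphi)$. Concretely, Theorem~\ref{thm:sym-357} gives, for any $\sigma \in {\rm Aut}({\mathbb C})$,
$$
\sigma\!\left(\frac{L_f(m, {\rm Sym}^{2n-1}(\varphi), \xi)}
{p^{\epsilon_{\xi}}(\varphi, 2n-1)\, p(m,k)\, \mathcal{G}(\xi_f)^{n}}\right) =
\frac{L_f(m, {\rm Sym}^{2n-1}(\varphi^{\sigma}), \xi^{\sigma})}
{p^{\epsilon_{\xi}}(\varphi^{\sigma}, 2n-1) \, p(m,k)\, \mathcal{G}(\xi_f^{\sigma})^{n}},
$$
and the same identity for the trivial character $\mathbf{1}$ in place of $\xi$, noting that $\mathcal{G}(\mathbf{1}_f)$ is rational and that $\mathbf{1}$ is even so $\epsilon_{\mathbf{1}} = +$. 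The key point enabling the division is that the ``period at infinity'' factor $p(m,k)$ is literally \emph{the same} in the twisted and untwisted cases: it depends only on the critical point $m$ and the weight $k$, and twisting by the finite-order character $\xi$ (with $\xi_\infty$ trivial, since $\xi$ is even) does not change the highest weights $\mu, \lambda$ that enter, hence does not change the $p_\infty(\mu,\lambda)$'s assembled into $p(m,k)$. So upon forming the quotient, the $p(m,k)$ cancels.

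First I would record that since $\xi$ is even, $\epsilon_\xi = +$, so the period appearing in the twisted case is $p^{+}(\varphi,2n-1)$, which is exactly the period appearing in the untwisted case; therefore the period factor $p^{\epsilon_\xi}(\varphi,2n-1)$ also cancels in the quotient. Dividing the twisted identity by the untwisted one then yields
$$
\sigma\!\left(\frac{L_f(m, {\rm Sym}^{2n-1}(\varphi), \xi)}
{\mathcal{G}(\xi_f)^{n}\, L_f(m, {\rm Sym}^{2n-1}(\varphi))}\right) =
\frac{L_f(m, {\rm Sym}^{2n-1}(\varphi^{\sigma}), \xi^{\sigma})}
{\mathcal{G}(\xi_f^{\sigma})^{n}\, L_f(m, {\rm Sym}^{2n-1}(\varphi^{\sigma}))},
$$
which is precisely the ${\rm Aut}({\mathbb C})$-equivariance assertion; the $\sim_{{\mathbb Q}(\varphi,\xi)}$ statement follows by taking $\sigma$ to range over ${\rm Gal}({\mathbb C}/{\mathbb Q}(\varphi,\xi))$ and observing that $\varphi^\sigma = \varphi$, $\xi^\sigma = \xi$ for such $\sigma$, so the ratio is fixed by that group and hence lies in ${\mathbb Q}(\varphi,\xi)$.

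The one genuine subtlety — the main obstacle — is that this division is only legitimate when $L_f(m, {\rm Sym}^{2n-1}(\varphi^\sigma)) \neq 0$ for all $\sigma$, i.e. when the untwisted value is nonzero and stays nonzero under Galois conjugation. For the critical points $m$ chosen in Theorem~\ref{thm:sym-357} this is in fact automatic: when $k$ is odd, $m$ lies on the right edge of the critical strip, corresponding (after the motivic shift) to the point $s=1$ for the relevant symmetric power representations, where nonvanishing is known by Jacquet--Shalika \cite{jacquet-shalika-inv} and, for ${\rm Sym}^5, {\rm Sym}^7$, by Kim--Shahidi \cite{kim-shahidi-duke}; when $k$ is even, the twist by $|\!|\ |\!|$ built into Proposition~\ref{prop:twisting} places us at $s=3/2$, again in the region of absolute convergence / known nonvanishing, and this nonvanishing is Galois-stable since $\varphi^\sigma$ is again a primitive form of the same weight and level. (Alternatively, one could phrase the corollary so that when $L_f(m,{\rm Sym}^{2n-1}\varphi)=0$ the statement about the quotient is understood vacuously, exactly as in the formulation of Theorem~\ref{thm:twisted}.) I would handle this by citing the relevant nonvanishing results and the case analysis of Proposition~\ref{prop:twisting}, then carrying out the division as above. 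The remaining steps — tracking that $\mathcal{G}(\xi_f)^n$ really is the Gauss-sum exponent produced by Theorem~\ref{thm:sym-357}, and that all rationality fields involved are contained in ${\mathbb Q}(\varphi,\xi)$ — are routine book-keeping.
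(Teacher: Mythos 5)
Your proposal is correct and is essentially the argument the paper intends: the corollary is stated as an immediate consequence of Theorem~\ref{thm:sym-357}, obtained by dividing the ${\rm Aut}({\mathbb C})$-equivariance identity for $\xi$ by the one for the trivial character, using that $\epsilon_\xi=+$ for even $\xi$ and that $p(m,k)$ is independent of the twist, so both period factors cancel. Your explicit justification of the nonvanishing of $L_f(m,{\rm Sym}^{2n-1}\varphi^\sigma)$ needed to legitimize the division (via the placement of $m$ at $s=3/2$ or $s=1$ and the Jacquet--Shalika and Kim--Shahidi results) is a point the paper leaves implicit, and is a welcome addition.
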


\subsubsection{Conjecture~\ref{con:blasius} plus Langlands' functoriality implies 
\cite[Conjecture 7.1]{raghuram-shahidi-aims}}

Our conjecture on twisted symmetric power $L$-values follows from 
the more general conjecture of Blasius and Panchishkin. We note that  
the heuristics on the basis of which 
we formulated \cite[Conjecture 7.1]{raghuram-shahidi-aims} are entirely disjoint from the motivic calculations
of Blasius and Panchishkin which is the basis of Conjecture~\ref{con:blasius}.
In this subsection, we briefly sketch a proof 
of how Conjecture~\ref{con:blasius} plus Langlands' functoriality for the $L$-homomorphism 
${\rm Sym}^n : {\rm GL}_2({\mathbb C}) \to {\rm GL}_{n+1}({\mathbb C})$ implies 
\cite[Conjecture 7.1]{raghuram-shahidi-aims}.

\begin{prop}
Let $\varphi \in S_k(N,\omega)_{\rm prim}$. Let $n \geq 1$ be any integer, $\chi$ an even Dirichlet
character (identified with a Hecke character), and $m$ a critical integer for $L_f(s, {\rm Sym}^n \varphi, \chi)$. Then, assuming Langlands 
functoriality in as much as assuming that ${\rm Sym}^n(\pi(\varphi))$ exists as an automorphic representation 
of ${\rm GL}_{n+1}({\mathbb A})$, Conjecture~\ref{con:blasius} implies 
$$
L_f(m, {\rm Sym}^n\varphi, \chi) \sim 
\mathcal{G}(\chi_f)^{\lceil (n+1)/2 \rceil}
L_f(m, {\rm Sym}^n\varphi),
$$ 
unless $n$ is even and $m$ is odd (to the left of center of symmetry), 
in which case we have
$$
L_f(m, {\rm Sym}^n\varphi, \chi) \sim \mathcal{G}(\chi_f)^{n/2}
L_f(m, {\rm Sym}^n\varphi),
$$  
where $\sim$ means up to an element of 
${\mathbb Q}(\varphi, \chi)$.
\end{prop}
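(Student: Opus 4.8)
The plan is to reduce the statement to Conjecture~\ref{con:blasius} applied to the single ${\rm GL}_{n+1}$-representation attached to ${\rm Sym}^n(\pi(\varphi))$, and then to make the exponent $d^{\pm}$ explicit. First I would fix normalizations: using the dictionary $L_f(s, {\rm Sym}^n\varphi, \chi) = L_f\big(s - n(k-1)/2,\ {\rm Sym}^n(\pi(\varphi))\otimes\chi\big)$, set $\Pi := {\rm Sym}^n(\pi(\varphi)) \otimes |\!|\ |\!|^{t}$ with $t \in \{0, 1/2\}$ chosen exactly as in Theorem~\ref{thm:symmetric-cohomology} so that $\Pi$ is regular algebraic ($k \geq 2$ gives regularity, and the standing hypothesis is that ${\rm Sym}^n(\pi(\varphi))$ exists; if it is cuspidal one applies Conjecture~\ref{con:blasius} to $\Pi$ directly, and if it is only isobaric one applies it to each cuspidal constituent and uses multiplicativity of $L$-functions and of Gauss sums). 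A critical integer $m$ for $L_f(s, {\rm Sym}^n\varphi, \chi)$ then corresponds to a common critical point $m' \in (n/2) + \mathbb{Z}$ of $L_f(s, \Pi)$ and $L_f(s, \Pi\otimes\chi)$ (the two have the same critical set since $\chi_\infty$ is trivial), and I would check that the auxiliary twist $|\!|\ |\!|^{t}$ does not alter the conclusion modulo $\mathbb{Q}(\varphi,\chi)$.

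Next, Conjecture~\ref{con:blasius} yields $L_f(m, {\rm Sym}^n\varphi, \chi) \sim_{\mathbb{Q}(\varphi,\chi)} \mathcal{G}(\chi_f)^{d^{\pm}}\, L_f(m, {\rm Sym}^n\varphi)$ with $d^{\pm} = d^{\pm}\big(\Pi \otimes |\!|\ |\!|^{n/2}\big)$ and $\pm = (-1)^{m' - n/2}$, so it remains to compute $d^{\pm}$. When $n$ is odd, ${\rm GL}_{n+1}$ has even rank, hence $d^{\pm} = (n+1)/2 = \lceil (n+1)/2\rceil$ independently of the sign, which is the generic assertion. When $n$ is even, $n+1$ is odd and $d^{\pm} = \tfrac12\big((n+1) \pm \eta(\Pi\otimes|\!|\ |\!|^{n/2})\big)$ with $\eta(\cdot) = {\rm Tr}\big(\tau((\cdot)_\infty)(j)\big)$; I would evaluate this from the archimedean parameter. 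Writing $\tau(\pi(\varphi)_\infty) \cong {\rm Ind}_{W_{\mathbb{C}}}^{W_{\mathbb{R}}}(\chi_\varphi)$ in a basis $e_1, e_2$ with $j e_1 = e_2$ and $j e_2 = \chi_\varphi(-1) e_1$, the $n$-th symmetric power has basis $e_1^{n-i} e_2^{i}$ for $0 \le i \le n$, and $j$ carries $e_1^{n-i} e_2^{i}$ to $\chi_\varphi(-1)^{i} e_1^{i} e_2^{n-i}$; therefore only the middle vector $e_1^{n/2} e_2^{n/2}$ (present precisely because $n$ is even) contributes to the trace, and the norm twist $|\!|\ |\!|^{n/2}$ is trivial at $-1$, so $\eta(\Pi\otimes|\!|\ |\!|^{n/2}) = \chi_\varphi(-1)^{n/2} = (-1)^{(k-1)n/2}$. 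Combining this with $\pm = (-1)^{m'-n/2}$ and $m' = m - n(k-1)/2$, the exponent collapses to $d^{\pm} = \tfrac12\big((n+1) + (-1)^{m - n/2}\big)$, i.e. it equals $n/2$ for one explicit congruence class of $m$ modulo $2$ and $\lceil (n+1)/2\rceil$ otherwise.

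The last step, and the main obstacle, is to match this congruence on $m$ with the phrasing ``$m$ odd, to the left of the center of symmetry'' of $L_f(s, {\rm Sym}^n\varphi)$: this is not deep, but it requires a careful description of the critical strip of the (even-rank) symmetric power $L$-function — the parities of its critical integers on either side of the center $(n(k-1)+1)/2$ — together with vigilance about the Gauss-sum normalization of \cite{shimura1} and about the half-integral twist $|\!|\ |\!|^{1/2}$, so that no spurious square roots enter $\mathbb{Q}(\varphi,\chi)$. Once this bookkeeping is carried out, comparison of the resulting exponent with the one asserted in \cite[Conjecture 7.1]{raghuram-shahidi-aims} completes the proof.
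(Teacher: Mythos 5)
Your overall route is the paper's: apply Conjecture~\ref{con:blasius} to the ${\rm GL}_{n+1}$-representation ${\rm Sym}^n(\pi(\varphi))$ (twisted by $|\!|\ |\!|^{1/2}$ when $n$ and $k$ are both odd), observe that for $n$ odd the exponent is $(n+1)/2$ regardless of the sign, and for $n$ even compute $\eta(\Pi\otimes|\!|\ |\!|^{n/2})$ from the archimedean parameter. The computation of that sign is where your argument breaks. You declare the twist $|\!|\ |\!|^{n/2}$ invisible to $j$ (``the norm twist is trivial at $-1$'') and obtain $\eta=(-1)^{(k-1)n/2}$ from the middle weight vector; combined with $\pm=(-1)^{m-nk/2}$ this yields the exponent $\tfrac12\bigl((n+1)+(-1)^{m-n/2}\bigr)$, exactly as you write. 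That does not match the proposition: it gives the exponent $n/2$ when $m\equiv n/2+1 \pmod 2$, whereas the statement (and, at $n=2$, the classical symmetric-square result) requires $n/2$ precisely when $m$ is odd; the two criteria disagree whenever $n\equiv 2\pmod 4$. The paper instead evaluates $\eta(\Pi\otimes|\!|\ |\!|^{n/2})=(-1)^{nk/2}$, i.e.\ the half-integral norm twist contributes an extra $(-1)^{n/2}$ to the trace of $j$ --- the automorphic avatar of the fact that $F_\infty$ acts on the Betti realization of the Tate twist ${\mathbb Q}(r)$ by $(-1)^r$, a sign not seen by the positive norm character of $W_{\mathbb R}$. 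With that normalization the exponent collapses to $\tfrac12\bigl((n+1)+(-1)^{m}\bigr)$ and the dichotomy of the proposition falls out at once; with yours it is wrong for $n\equiv 2\pmod 4$. So the missing ingredient is the correct behaviour of $d^{\pm}(\,\cdot\,\otimes|\!|\ |\!|^{r})$ under the auxiliary norm twist, not the final parity bookkeeping you flag as the main obstacle.

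A second, lesser gap is the non-cuspidal case. Applying Conjecture~\ref{con:blasius} ``to each cuspidal constituent and using multiplicativity'' is not obviously licit: the conjecture is stated for cuspidal representations, the point $m$ need not be critical for each constituent separately, and there is no a priori reason the constituents' exponents $d^{\pm}$ sum to the asserted one. The paper sidesteps this entirely: by Kim--Shahidi and Ramakrishnan, non-cuspidality of ${\rm Sym}^n(\pi(\varphi))$ forces $\varphi$ to be dihedral or of weight one; the dihedral case was verified directly in \cite{raghuram-shahidi-aims}, and in weight one there are no critical integers, so the claim is vacuous.
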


\begin{proof}
We assume that ${\rm Sym}^n(\pi(\varphi))$ is cuspidal, because, if not, then $\varphi$ is either dihedral or
a form of weight $1$. (This follows from Kim--Shahidi \cite{kim-shahidi-duke} and 
Ramakrishnan \cite{ramakrishnan-symm}.) If $\varphi$ is dihedral, then we have verified the conclusion; indeed 
this was one of the heuristics for \cite[Conjecture 7.1]{raghuram-shahidi-imrn}. If 
it has weight $1$, then none of the symmetric power $L$-functions have critical integers 
(\cite[Remark 3.8]{raghuram-shahidi-aims}) and so the conclusion
is vacuously true! 

Let $\Pi = {\rm Sym}^n(\pi(\varphi))$. The restriction to ${\mathbb C}^*$ of the Langlands parameter
of $\Pi_{\infty}$ is given by 
$$
z \mapsto \bigoplus_{i=0}^n z^{(n-2i)(k-1)/2} \, \bar{z}^{(2i-n)(k-1)/2}.
$$
Note that $\Pi$ is algebraic if and only if $(n-2i)(k-1)/2 + n/2$ is an integer, and this is so if and only 
if $nk$ is even. If both $n$ and $k$ are odd then $\Pi \otimes |\!|\ |\!|^{1/2}$ is algebraic. 

We start with the case when $n$ is even. Applying Conjecture~\ref{con:blasius} we have 
\begin{eqnarray*}
L_f(m, {\rm Sym}^n\varphi, \chi) 
& = & 
L_f(m - n(k-1)/2, {\rm Sym}^n(\pi(\varphi))\otimes \chi), \\
& \sim & \mathcal{G}(\chi_f)^{\frac{n+1\pm \eta(\Pi \otimes |\!|\ |\!|^{n/2})}{2}}L_f(m - n(k-1)/2, {\rm Sym}^n(\pi(\varphi))) \\
& \sim & \mathcal{G}(\chi_f)^{\frac{n+1\pm \eta(\Pi \otimes |\!|\ |\!|^{n/2})}{2}}L_f(m, {\rm Sym}^n \varphi ), 
\end{eqnarray*}
where $\pm = m-n(k-1)/2-n/2 = m-nk/2$. Now we compute $\eta(\Pi \otimes |\!|\ |\!|^{n/2})$ toward which 
one can check that the Langlands parameter of $\Pi_{\infty}$ is given by 
$$
{\rm Sym}^n(I(\chi_{k-1})) = \epsilon^{n(k-1)/2} \oplus
\bigoplus_{a=1}^{n/2} I(\chi_{(2a(k-1))}),
$$
where for any integer $b$, $I(\chi_b)$ denotes the induction to $W_{\mathbb R}$ of the character 
$z \mapsto (z/|z|)^b$ of ${\mathbb C}^*$, and $\epsilon$ is the sign character of ${\mathbb R}^*$ 
which is thought of as a character of $W_{\mathbb R}$ via the 
isomorphism $W_{\mathbb R}^{\rm ab} \simeq {\mathbb R}^*$. It is easy to see that on all 
the two dimensional summands the element $j \in W_{\mathbb R}$ has trace equal to $0$, and 
$\epsilon$ maps $j$ to $-1$. We get 
$$
\eta(\Pi \otimes |\!|\ |\!|^{n/2}) = (-1)^{nk/2}.
$$
From this we get 
$$
L_f(m, {\rm Sym}^n\varphi, \chi) \sim 
\mathcal{G}(\chi_f)^{\frac{n+1\pm (-1)^{nk/2}}{2}}L_f(m, {\rm Sym}^n \varphi).
$$
We contend that from here on it is easy to see that the conclusion follows. (It might help the reader to further subdivide into the cases depending on when $k$ is even or odd.)

If $n$ is odd then the exponent of the Gauss sum that factors out is predicted to be $(n+1)/2$ 
($=d^{\pm}(\Pi)$) in both Conjecture~\ref{con:blasius} and so also in the conclusion of the proposition. 
One detail that needs to be circumvented 
is that $\Pi$ is not algebraic if both $n$ and $k$ are odd; for this case we argue as:
\begin{eqnarray*}
L_f(m, {\rm Sym}^n\varphi, \chi) 
& = & 
L_f(m - n(k-1)/2 -1/2, {\rm Sym}^n(\pi(\varphi)) \otimes |\!|\ |\!|^{1/2} \otimes \chi) \\
& \sim & \mathcal{G}(\chi_f)^{\frac{n+1}{2}}L_f(m - n(k-1)/2 -1/2, {\rm Sym}^n(\pi(\varphi))\otimes |\!|\ |\!|^{1/2}) \\
& = & \mathcal{G}(\chi_f)^{\frac{n+1}{2}}L_f(m, {\rm Sym}^n \varphi).
\end{eqnarray*}
\end{proof}

\subsection{Remarks on compatibility with Deligne's conjecture}
We recall the famous conjecture of Deligne \cite[Conjecture 2.8]{deligne}. Let $M$ be a motive, and assume that  
$s = 0$ is critical for the $L$-function $L(s,M)$. 
Deligne attaches two periods $c^{\pm}(M)$ to $M$ by comparing the Betti and de Rham realizations of $M$. He predicts that $L(0,M)/c^+(M)$ is in a suitable number field 
${\mathbb Q}(M)$, and more generally, the ratio is ${\rm Aut}({\mathbb C})$-equivariant. 
One expects that our theorems above are compatible with Deligne's conjecture. 
This expectation is formalized in the following conjecture. 

\begin{con}[Period relations]
\label{con:period-relations}
Let $\Pi$ and $\Sigma$ be regular algebraic cuspidal automorphic representations of ${\rm GL}_n({\mathbb A})$
and ${\rm GL}_{n-1}({\mathbb A})$, respectively. Let $\mu,\lambda$ be the associated highest weights, and 
$\epsilon, \eta$ the associated signs as in Theorem~\ref{thm:rankin-selberg}.
We let $M(\Pi)$ and $M(\Sigma)$ be the conjectural motives attached to $\Pi$ and $\Sigma$  
(by Clozel \cite[Conjecture 4.5]{clozel}). Let $M = M(\Pi) \otimes M(\Sigma)$. Let $c^{\pm}(M)$ be Deligne's 
periods attached to $M$, and $d^{\pm}(M)$ be the integers as in Deligne \cite[\S 1.7]{deligne}.
We expect 
$$
p^{\epsilon}(\Pi) p^{\eta}(\Sigma)\mathcal{G}(\omega_{\Sigma_f})p_{\infty}(\mu,\lambda) 
\sim 
(2\pi i)^{d^+(M)n(n-1)/2}c^{(-1)^{n(n-1)/2}}(M),
$$
where, by $\sim$, we mean up to an element of the number field ${\mathbb Q}(\Pi,\Sigma)$.
\end{con}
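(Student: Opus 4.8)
The plan is to verify Conjecture~\ref{con:period-relations} as a compatibility statement with Deligne's conjecture \cite{deligne}, by computing both sides of the claimed relation in terms of Deligne periods of the factor motives. First I would record the Hodge--de Rham data of the conjectural motives $M(\Pi)$, $M(\Sigma)$: regularity of $\Pi$ (resp.\ $\Sigma$) forces $M(\Pi)$ (resp.\ $M(\Sigma)$) to be pure of weight ${\rm wt}(\mu)$ (resp.\ ${\rm wt}(\lambda)$) with $n$ (resp.\ $n-1$) \emph{distinct} Hodge types, read off from $\mu$ (resp.\ $\lambda$). The hypothesis $\mu^{\vee}\succ\lambda$ translates precisely into an \emph{interlacing} of the two sets of Hodge types, and one checks that this interlacing is exactly the numerical condition making the critical point of $L(s,M(\Pi)\otimes M(\Sigma))$ corresponding to $s=1/2$ for $L_f(s,\Pi\times\Sigma)$ critical, that the relevant $F_\infty$-eigenspace has the dimension recorded by $d^{+}(M)$, and that the pertinent sign is $(-1)^{n(n-1)/2}$ --- accounting for the superscript on $c^{(-1)^{n(n-1)/2}}(M)$. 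Granting Deligne's conjecture for $M=M(\Pi)\otimes M(\Sigma)$, the left side $L_f(1/2,\Pi\times\Sigma)$ is $\sim c^{(-1)^{n(n-1)/2}}(M)$ up to a Tate twist, so by Theorem~\ref{thm:rankin-selberg} the whole conjecture reduces to the purely period-theoretic identity
$$
p^{\epsilon}(\Pi)\,p^{\eta}(\Sigma)\,\mathcal{G}(\omega_{\Sigma_f})\,p_{\infty}(\mu,\lambda)\ \sim\ (2\pi i)^{d^{+}(M)\,n(n-1)/2}\,c^{(-1)^{n(n-1)/2}}(M).
$$

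Second, I would compute the right-hand Deligne period $c^{\pm}(M(\Pi)\otimes M(\Sigma))$ using the tensor-product formalism of \cite[\S 1.7, \S 2]{deligne}. This computation is tractable precisely because of the interlacing hypothesis: ordering the two Hodge filtrations compatibly makes the Betti--de Rham comparison isomorphism that computes $c^{\pm}$ block triangular, so its determinant splits over the factors, and one obtains an explicit expression for $c^{\pm}(M)$ as a product of a Deligne period $c^{\epsilon}(M(\Pi))$, a Deligne period $c^{\eta}(M(\Sigma))$, the determinant period $\delta(M(\Sigma))$, and a power of $2\pi i$ depending only on $\mu,\lambda$; the signs $\epsilon,\eta$ emerge exactly as in the recipe of Theorem~\ref{thm:main-identity}. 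This is the mechanism underlying the period heuristics of Blasius \cite{blasius2} and Panchishkin \cite{panchishkin}. Here one also uses that $\det M(\Sigma)$ is the motive of the Hecke character $\omega_{\Sigma}$ up to a Tate twist, so Deligne's Gauss-sum computation for Artin-type motives gives $\delta(M(\Sigma))\sim\mathcal{G}(\omega_{\Sigma_f})$ up to a power of $2\pi i$ --- this is what matches the Gauss-sum factor on the left side of the conjecture.

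Third, I would invoke the expected dictionary between the automorphic (Whittaker/cohomological) periods and Deligne's motivic periods, namely $p^{\epsilon}(\Pi)\sim c^{\epsilon}(M(\Pi))$ and $p^{\eta}(\Sigma)\sim c^{\eta}(M(\Sigma))$ up to powers of $2\pi i$ --- classical for ${\rm GL}_2$ (Deligne's own example, relating $c^{+}$ of the motive of a modular form to Shimura's period), and available or conjectural in higher rank. Substituting these into the factorization of $c^{\pm}(M)$ obtained in the previous step leaves only a comparison of powers of $2\pi i$: those coming from the two motive-to-period comparisons, from $\delta(M(\Sigma))$, from the tensor-product formula, and from $d^{+}(M)n(n-1)/2$, against the single quantity $p_{\infty}(\mu,\lambda)$. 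Since the left side of the conjecture depends on $\mu,\lambda$ only through $p_{\infty}$, and the right side only through these $2\pi i$-powers, the conjecture follows once one checks that $p_{\infty}(\mu,\lambda)$ equals the resulting power of $2\pi i$; in particular such a check would simultaneously confirm the expectation recorded in \S\ref{subsec:nonvanishing} that $p_{\infty}(\mu,\lambda)$ is a power of $2\pi i$.

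The main obstacle is that two of these inputs are themselves open. The comparison $p^{\epsilon}(\Pi)\sim c^{\epsilon}(M(\Pi))$ is genuinely deep: beyond the existence of $M(\Pi)$, it demands a precise matching of the rational structure on the Whittaker model of $\Pi$ with the Betti and de Rham rational structures on $M(\Pi)$, and is known unconditionally only in low rank. Equally, the identity $p_{\infty}(\mu,\lambda)=(2\pi i)^{\ast}$ is unproved: it amounts to the explicit archimedean evaluation of the Rankin--Selberg integral for cohomological vectors --- the quantity $\langle[\Pi_\infty],[\Sigma_\infty]\rangle$ of \S\ref{subsec:nonvanishing}, which is moreover only conjecturally nonzero (Hypothesis~\ref{hypo:nonvanishing}) --- and at present this is carried out only in special cases. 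Thus, even granting the existence of the conjectural motives and Deligne's conjecture, a complete proof must await both an explicit solution of this archimedean period problem and a proof of the Whittaker-period/Deligne-period comparison; what the plan does establish is that these are the \emph{only} missing ingredients, and that the shapes of the Gauss-sum factor $\mathcal{G}(\omega_{\Sigma_f})$ and of the sign $(-1)^{n(n-1)/2}$ appearing in the conjecture already fall out of the Hodge-theoretic bookkeeping.
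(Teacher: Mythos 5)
This statement is a conjecture, and the paper does not prove it; it offers only a formal heuristic, namely the chain: by Theorem~\ref{thm:rankin-selberg} the left-hand side is $\sim L_f(1/2,\Pi\times\Sigma)$, which under Langlands functoriality and Clozel's conjecture equals $L(0,M(n(n-1)/2))$, which by Deligne's conjecture is $\sim c^{+}(M(n(n-1)/2))=(2\pi i)^{d^{+}(M)n(n-1)/2}c^{(-1)^{n(n-1)/2}}(M)$. Your first paragraph is exactly this chain, so at its core your proposal matches the paper's justification. Where you diverge is in going further: the paper only remarks in its final sentence that $c^{\pm}(M(\Pi)\otimes M(\Sigma))$ can be expressed in terms of invariants of the factor motives in the manner of Blasius and Yoshida, whereas you actually sketch that tensor-product period computation (interlacing of Hodge types from $\mu^{\vee}\succ\lambda$, the factorization into $c^{\epsilon}(M(\Pi))$, $c^{\eta}(M(\Sigma))$, $\delta(M(\Sigma))\sim\mathcal{G}(\omega_{\Sigma_f})$, and powers of $2\pi i$) together with the Whittaker-period/motivic-period dictionary, and you correctly isolate the two genuinely open inputs ($p^{\epsilon}(\Pi)\sim c^{\epsilon}(M(\Pi))$ and $p_{\infty}(\mu,\lambda)=(2\pi i)^{\ast}$). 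That is a legitimate and more informative program than what the paper records, and it is honestly flagged as conditional.

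One caveat applies equally to your account and to the paper's heuristic: the transitivity of $\sim$ through the common value $L_f(1/2,\Pi\times\Sigma)$ is vacuous if that central value vanishes, since both sides of the conjectured relation are products of nonzero complex numbers. So the derivation from Theorem~\ref{thm:rankin-selberg} plus Deligne's conjecture only supports the conjecture when $L_f(1/2,\Pi\times\Sigma)\neq 0$ (and, of course, under Hypothesis~\ref{hypo:nonvanishing}, which is needed even to define $p_{\infty}(\mu,\lambda)$). It would be worth stating this nonvanishing proviso explicitly in your first paragraph.
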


The heuristic for the above conjecture is the following `formal' calculation based on 
Theorem~\ref{thm:rankin-selberg}, Langlands' functoriality, 
the correspondence between automorphic representations and motives as in Clozel \cite[Conjecture 4.5]{clozel}, 
and Deligne \cite[Conjecture 2.8]{deligne}: 

\begin{eqnarray*}
p^{\epsilon}(\Pi) p^{\eta}(\Sigma)\mathcal{G}(\omega_{\Sigma})p_{\infty}(\mu,\lambda) 
& \sim &
L(1/2, \Pi \times \Sigma), \ \ \mbox{(by Theorem~\ref{thm:rankin-selberg})}\\
& = & L(1/2, \Pi \boxtimes \Sigma), \ \ \mbox{(by Langlands' functoriality)} \\
& = & L(n(n-1)/2, M(\Pi \boxtimes \Sigma)),\ \ \mbox{(motivic normalization)} \\
& = & L(n(n-1)/2, M), \ \ \mbox{(by definition of $M$)} \\
& = & L(0, M(n(n-1)/2)), \ \ \mbox{(see \cite[3.1.2]{deligne})} \\
& \sim & c^+(M(n(n-1)/2)), \ \ \mbox{(by Deligne \cite[Conjecture 2.8]{deligne})} \\
& = & (2\pi i)^{d^+(M)n(n-1)/2}c^{(-1)^{n(n-1)/2}}(M) \ \ \mbox{(see \cite[5.1.8]{deligne})}.
\end{eqnarray*}

It is possible to express $c^{\pm}(M(\Pi)\otimes M(\Sigma))$ in terms of the periods, or perhaps some other 
finer invariants, attached to $M(\Pi)$ and $M(\Sigma)$, as in Blasius \cite{blasius-appendix}
and Yoshida \cite{yoshida}.


\bigskip

{\it Address:}

A. Raghuram

Department of Mathematics

Oklahoma State University

401 Mathematical Sciences

Stillwater, OK 74078, USA. 

{\it E-mail address:}
{\tt araghur@math.okstate.edu}


\begin{thebibliography}{100}

\bibitem{ash-ginzburg}
A. Ash and D. Ginzburg,
{\it $p$-adic $L$-functions for ${\rm GL}(2n)$.}  
Invent. Math. 116, no. 1-3, 27--73 (1994).


\bibitem{blasius-appendix}
D. Blasius, 
Appendix to Orloff Critical values of certain tensor product $L$-functions.
Invent. Math. 90, no. 1, 181--188 (1987). 

\bibitem{blasius2}
D. Blasius,
{\it Period relations and 
critical values of $L$-functions.}
Olga Taussky-Todd: in memoriam. Pacific J. Math., Special Issue, 53--83 (1997). 

\bibitem{borel-jacquet}
A. Borel and H. Jacquet, 
{\it Automorphic forms and automorphic representations.}
Proc.Sympos. Pure Math., XXXIII, Automorphic forms, representations and $L$-functions (Proc. Sympos. Pure 
Math., Oregon State Univ., Corvallis, Ore., 1977), Part 1, pp. 189--207, Amer. Math. Soc., 
Providence, R.I., 1979. 

\bibitem{chinta-friedberg-hoffstein}
G. Chinta, S. Friedberg and J. Hoffstein, 
{\it Asymptotics for sums of twisted L-functions and applications.} 
In Automorphic representations, L-functions and applications: progress and prospects, 75--94, Ohio State Univ. Math. Res. Inst. Publ., 11, de Gruyter, Berlin, 2005.

\bibitem{clozel}
L. Clozel, 
{\it Motifs et formes automorphes: applications du principe de 
fonctorialit\'e.} (French) [Motives and automorphic forms: applications of 
the functoriality principle] 
Automorphic forms, Shimura varieties, and $L$-functions, Vol. I (Ann Arbor, MI, 1988), 
77--159, Perspect. Math., 10, Academic Press, Boston, MA, 1990. 

\bibitem{cogdell-notes}
J. Cogdell, 
{\it Lectures on $L$-functions, converse theorems, and functoriality for ${\rm GL}\sb n$.}  
Lectures on automorphic $L$-functions,  1--96, Fields Inst. Monogr., 20, Amer. Math. Soc., Providence, RI, 2004.

\bibitem{cogdell-ps}
J. Cogdell and I.I. Piatetski-Shapiro, 
{\it Remarks on Rankin-Selberg convolutions.}  
Contributions to automorphic forms, geometry, and number theory,  255--278, 
Johns Hopkins Univ. Press, Baltimore, MD, 2004. 

\bibitem{deligne}
P. Deligne,
{\it Valeurs de fonctions $L$ et p\'eriodes d'int\'egrales (French),}
With an appendix by N. Koblitz and A. Ogus. Proc.  Sympos. Pure Math.,
XXXIII, Automorphic forms, representations and $L$-functions (Proc.
Sympos. Pure Math., Oregon State Univ., Corvallis, Ore., 1977), Part 2,
pp. 313--346, Amer. Math. Soc., Providence, R.I., 1979.

\bibitem{gan-gross-prasad}
W.T. Gan, B. Gross and D. Prasad, 
{\it Symplectic local root number, central critical $L$-values, and restriction problems 
in the representation theory of classical groups.} Preprint (2008). 


\bibitem{garrett-harris}
P. Garrett and M. Harris,
{\it Special values of triple product $L$-functions.}
American J. Math., Vol 115, 159--238 (1993).


\bibitem{gelbart-jacquet}
S. Gelbart and H. Jacquet,
{\it A relation between automorphic representations of ${\rm GL}(2)$ and
${\rm GL}(3)$.}
Ann. Sci. \'Ecole Norm. Sup. (4)  11, no. 4, 471--542 (1978).

\bibitem{godement}
R. Godement, 
{\it Notes on Jacquet--Langlands' Theory.}
Preprint, I.A.S Princeton, (1970). 


\bibitem{griffiths-harris}
P. Griffiths and J. Harris, 
{\it Principles of algebraic geometry.} 
Reprint of the 1978 original. Wiley Classics Library. John Wiley and Sons, Inc., New York, 1994. xiv+813 pp.

\bibitem{harder}
G. Harder,
{\it General aspects in the theory of modular symbols.}
Seminar on number theory, Paris 1981--82 (Paris, 1981/1982), 73--88,
Progr. Math., 38, Birkhäuser Boston, Boston, MA, 1983.


\bibitem{harris}
M. Harris, 
{\it Occult period invariants and critical values of the 
degree four $L$-function of ${\rm GSp(4)}$.}  
Contributions to automorphic forms, geometry, and number theory,  
331--354, Johns Hopkins Univ. Press, Baltimore, MD, 2004. 


\bibitem{henniart}
G. Henniart, 
{\it Sur la conjecture de Langlands locale pour ${\rm GL}\sb n$.} 
21st Journ\'ees Arithm\'etiques (Rome, 2001).  J. Th\'eor. Nombres Bordeaux  13,  no. 1, 167--187 (2001). 

\bibitem{jacquet-shalika-inv}
H. Jacquet and J.~Shalika. 
{\it A non-vanishing theorem for zeta functions of ${\rm GL}\sb{n}$.}  
Invent. Math.  38, no. 1, 1--16 (1976/77). 

\bibitem{jacquet-shalika-ajm}
H. Jacquet and J. Shalika.
{\it On Euler products and the classification of automorphic representations I and II,} 
Amer. J. of Math., 103, 499-558 and 777--815 (1981). 


\bibitem{jacquet-ps-shalika}
H.~Jacquet, I.~Piatetski-Shapiro and J.~Shalika.
{\it Conducteur des repr\'esentations du groupe lin\'eaire.} (French) 
[Conductor of linear group representations]  Math. Ann.  256, no. 2, 199--214 (1981). 

\bibitem{jacquet-ps-shalika-ajm83}
H. Jacquet, I.I. Piatetskii-Shapiro, J.A. Shalika, 
{\it Rankin--Selberg Convolutions.}
American J. of Math., Vol. 105, No. 2, pp. 367--464 (1983). 


\bibitem{kasten-schmidt}
H. Kasten and C.-G. Schmidt, 
{\it On critical values of Rankin--Selberg convolutions.} Preprint (2008). \newline
Available at {\tt http://www.mathematik.uni-karlsruhe.de/user/mathnet/preprint.html}


\bibitem{kazhdan-mazur-schmidt}
D. Kazhdan, B. Mazur, C.-G. Schmidt, 
{\it Relative modular symbols and Rankin--Selberg convolutions.}
J. Reine Angew. Math. 519, 97--141 (2000). 

\bibitem{kim}
H. Kim,
{\it Functoriality for the exterior square of ${\rm GL}\sb 4$ and the
symmetric fourth of ${\rm GL}\sb 2$.}
With appendix 1 by Dinakar Ramakrishnan and appendix 2 by Kim and Peter
Sarnak. J. Amer. Math. Soc. 16, no. 1, 139--183 (2003).


\bibitem{kim-shahidi-annals}
H. Kim and F. Shahidi,
{\it Functorial products for ${\rm GL}\sb 2 \times{\rm GL}\sb 3$ and the
symmetric cube for ${\rm GL}\sb 2$. With an
appendix by Colin J. Bushnell and Guy Henniart.}
Ann. of Math. (2) 155, no. 3, 837--893 (2002).


\bibitem{kim-shahidi-duke}
H. Kim and F. Shahidi,
{\it Cuspidality of symmetric powers with applications.}
Duke Math. J. 112, no. 1, 177--197 (2002).

\bibitem{labesse-schwermer}
J.-P. Labesse and J. Schwermer,
{\it On liftings and cusp cohomology of arithmetic groups.}
Invent. Math., 83, 383--401 (1986).


\bibitem{mahnkopf-crelle}
J. Mahnkopf, 
{\it Modular symbols and values of $L$-functions on ${\rm GL}\sb 3$.}  
J. Reine Angew. Math.  497, 91--112 (1998). 

\bibitem{mahnkopf-jussieu}
J. Mahnkopf, 
{\it Cohomology of arithmetic groups, parabolic subgroups
and the special values of $L$-functions of ${\rm GL}_n$.}
J. Inst. Math. Jussieu. 4, no. 4, 553-637 (2005). 

\bibitem{neukirch}
J. Neukirch, 
{\it Algebraic number theory,} 
Die Grundlehren der Mathematischen Wissenschaften [Fundamental Principles of Mathematical Sciences], 322. Springer-Verlag, Berlin, (1999). 


\bibitem{panchishkin}
A.A. Panchishkin, 
{\it Motives over totally real fields and $p$-adic $L$-functions.}
Ann. Inst. Fourier (Grenoble) 44, no. 4, 989--1023 (1994). 


\bibitem{raghuram-shahidi-aims}
A. Raghuram and F. Shahidi,
{\it Functoriality and special values of $L$-functions,}
Eisenstein series and Applications, Progress in Mathematics 258. Boston, MA: Birkh\"auser Boston, 2008.

\bibitem{raghuram-shahidi-imrn}
A. Raghuram and F. Shahidi,
{\it On certain period relations for cusp forms on ${\rm GL}_n$,}
Int. Math. Res. Notices, (2008) Vol. 2008, article ID rnn077, 23 pages, doi:10.1093/imrn/rnn077.

\bibitem{ramakrishnan-symm}
D. Ramakrishnan, 
{\it Remarks on the symmetric powers of cusp forms on ${\rm GL}_2$,}
Preprint, (2007), Available at {\tt http://xxx.lanl.gov/abs/0710.0676}.

\bibitem{ramakrishnan-wang}
    D. Ramakrishnan and S. Wang, 
    {\it A cuspidality criterion for the functorial product on $\rm GL(2)\times GL(3)$ 
    with a cohomological application.}  
    Int. Math. Res. Not.,  no. 27, 1355--1394 (2004). 


\bibitem{schmidt}
C.-G. Schmidt, 
{\it Relative modular symbols and $p$-adic Rankin--Selberg convolutions,}
Invent.Math. 112, 31--76 (1993). 


\bibitem{shimura1}
G. Shimura,
{\it The special values of the zeta functions associated with cusp forms,}
Comm. Pure Appl. Math.,  29, no. 6, 783--804, (1976).

\bibitem{shimura2}
G. Shimura,
{\it On the periods of modular forms,}
Math. Ann.,  229, no. 3, 211--221, (1977).

\bibitem{tate}
J. Tate, 
{\it Fourier analysis in number fields and Hecke's zeta function.}
1967 Algebraic Number Theory (Proc. Instructional Conf., Brighton, 1965)  pp. 305--347 Thompson, 
Washington, D.C. 


\bibitem{weil}
A. Weil, 
{\it Basic number theory,} 
Die Grundlehren der mathematischen Wissenschaften, Band 144 Springer-Verlag New York, Inc., New York 
(1967). 

\bibitem{wu}
Qingyu Wu, 
{\it Image of transfer from ${\rm GL}_2 \times {\rm GL}_3$ to ${\rm GL}_6$.}
Ph.D. Thesis, Purdue University, (2008).

\bibitem{yoshida}
H. Yoshida, 
{\it Motives and Siegel modular forms.}  Amer. J. Math.  123,  no. 6, 1171--1197 (2001). 

\end{thebibliography}
\end{document}